\newtheorem{theorem}{Theorem}[section]
\newtheorem{lemma}[theorem]{Lemma}
\newtheorem{proposition}[theorem]{Proposition}
\newtheorem{corollary}[theorem]{Corollary}
\theoremstyle{definition}
\newtheorem{question}[theorem]{Question}
\theoremstyle{remark}
\newtheorem{remark}[theorem]{Remark}
\newtheorem{conjecture}[theorem]{Conjecture}
\title{Hermitian matrices of roots of unity and their  \\ characteristic polynomials}
\author{Gary R.W. Greaves\thanks{The first author was supported in part by the Singapore Ministry of Education Academic Research Fund (Tier 1);  grant numbers: RG21/20 and RG23/20.} \ \ and Chin Jian Woo  \\ \\
School of Physical and Mathematical Sciences, \\
  Nanyang Technological University, \\
   21 Nanyang Link, Singapore 637371\\
  {\tt gary@ntu.edu.sg} \\ {\tt haysonwoo@gmail.com} }
\date{}
\begin{document}

\maketitle

\begin{abstract}
    We investigate spectral conditions on Hermitian matrices of roots of unity.
    Our main results are conjecturally sharp upper bounds on the number of residue classes of the characteristic polynomial of such matrices modulo ideals generated by powers of $(1-\zeta)$, where $\zeta$ is a root of unity.
    We also prove a generalisation of a classical result of Harary and Schwenk about a relation for traces of powers of a graph-adjacency matrix, which is a crucial ingredient for the proofs of our main results.
\end{abstract}

\section{Introduction}

Throughout, we denote the identity matrix by $I$ and the all-ones matrix by $J$, the order of $I$ and $J$ will be understood from context.
Fix an integer $q \geqslant 2$.
Let $\zeta$ be a primitive $q$th root of unity and let $\mathcal C_q = \langle \zeta \rangle$ be the set of powers of $\zeta$.
Denote by $\mathcal H_n(q)$ the set of all Hermitian $\mathcal C_q$-matrices of order $n$, i.e., the set of Hermitian matrices whose entries are powers of $\zeta$.
Matrices from the set $\mathcal H_n(q)$ play a central role in various areas of mathematics, including discrete geometry~\cite{lemmens73}, quantum information theory~\cite{stacey}, group theory~\cite{taylor}, and wireless communication~\cite{heath}.
One pertinent example where such matrices make an appearance is as signature matrices of equiangular tight frames~\cite{heath}.

Let $B$ denote a complex $d \times n$ matrix having the vectors $\mathbf x_1, \dots, \mathbf x_n$ as its columns and let $B^*$ denote the complex transpose of $B$.
The set $F = \{ \mathbf x_1, \dots, \mathbf x_n \}$ is called an $(n,d)$-\textbf{equiangular tight frame} (ETF) if
 $B^*B$ has $1$s on its diagonal, all entries on the off-diagonal have the same absolute value, and $BB^* = nI/d$.
ETFs are extremal objects with respect to the Welch bound, which says that, for any set $F = \{ \mathbf x_1, \dots, \mathbf x_n \}$ of unit vectors in $\mathbb C^d$, we have
\[
	\max_{i \ne j} | \mathbf x_i^* \mathbf x_j  | \geqslant \sqrt{ \frac{n-d}{ d(n-1)}}.
\]
Equality holds in the Welch bound if and only if $F$ is an ETF.
We can obtain the \textbf{signature matrix} $S(F)$ of an ETF from the matrix $B$ as
\[
S(F) := \sqrt{ \frac{d(n-1)}{n-d}}(B^*B -I).
\]
Note that the off-diagonal entries of a signature matrix of an ETF all lie on the unit circle.
We call an Hermitian matrix $S$ a $q$\textbf{-Seidel matrix} if all of its off-diagonal entries are in $\mathcal C_q$ and its diagonal entries are all $0$. 
Our $2$-Seidel matrices are known simply as \textit{Seidel matrices} in the literature, see for example~\cite{ghorbani, GG18, GKMS16,roux,SzOs18}.
Interestingly, for most known constructions of ETFs~\cite{Fi0}, the signature matrix is a $q$-Seidel matrix for some $q \in \mathbb N$.
For this reason, ETFs having signature matrices whose (off-diagonal) entries are $q$th roots of unity have been investigated~\cite{pthRoot, thirdRoot,fourthRoot} when $q$ is a prime power.
We extend the above investigations by studying the spectral properties of $q$-Seidel matrices that do not necessarily correspond to ETFs.

An ETF is called \textbf{real} if the vectors $\mathbf x_i \in \mathbb R^d$ for all $i \in \{1,\dots,n\}$.
Azarija and Marc~\cite{Azarija75,Azarija95} recently showed that there exists neither a real $(76,19)$-ETF nor a real $(96,20)$-ETF.
That is, there does not exist a $(76,19)$-ETF nor a $(96,20)$-ETF whose signature matrix is a $2$-Seidel matrix.
Fickus et al.~\cite{Steiner12} showed that there does exist a $(96,20)$-ETF whose signature matrix is a $4$-Seidel matrix, but it is currently not known if there exists a $(76,19)$-ETF whose signature matrix is a $4$-Seidel matrix.
However, it is known that there exists a $(76,19)$-ETF whose signature matrix is a $6$-Seidel matrix~\cite{hyperOvals16}.
Thus, our main motivation stems from the following question. 
\begin{question}
\label{que:main}
For a given pair $(n,d)$, for which $q\in \mathbb N$ can there exist a $q$-Seidel matrix that is a signature matrix of an $(n,d)$-ETF?
\end{question}

Instead of restricting ourselves to $q$-Seidel matrices, we work more generally with the sets $\mathcal H_n(q)$ of Hermitian $\mathcal C_q$-matrices of order $n$ and investigate their spectral properties.
Note that we obtain an element of $\mathcal H_n(q)$ by adding the identity matrix to any $q$-Seidel matrix of order $n$.
Our main results are necessary conditions on the coefficients of the characteristic polynomial $\operatorname{Char}_H(x) := \det(xI-H)$ of a matrix $H \in \mathcal H_n(q)$.
Our investigation is a natural follow-up to \cite{GreavesYatsyna19}, where certain necessary spectral conditions for Seidel matrices were introduced.
In particular, Greaves and Yatsyna~\cite{GreavesYatsyna19} showed that, for any Seidel matrix $S$, the coefficients of $\operatorname{Char}_S(x)$, which are all integers, must lie in certain congruence classes modulo powers of $2$. 
These necessary conditions have been used in \cite{GSY21,GSY22} to establish the nonexistence of Seidel matrices having certain prescribed spectra, which led to the best known (and best possible) upper bounds for equiangular line systems in certain dimensions.
The results of this paper can be thought of as a first step towards the goal of answering Question~\ref{que:main}.
Further steps, analogous to those of \cite{GSY21,GSY22}, may well be required for a complete answer to Question~\ref{que:main} but this is not attempted in this article.

In addition to ETFs and equiangular lines, the results of this paper can be applied to the spectral study of Hermitian Butson-Hadamard matrices~\cite{IMHermBut}, complete gain graphs~\cite{ReffGain12}, and Hermitian adjacency matrices of mixed graphs~\cite{Guo17}.

Throughout, we will require the use of some elementary results about cyclotomic fields for which we refer to Washington's textbook~\cite{Washington97}. 
In an attempt to make this paper self-contained, we present the necessary background about cyclotomic fields in Section~\ref{sec:prelims}.
Let $H \in \mathcal H_n(q)$.
Since $H$ is Hermitian, all of its eigenvalues are real and thus so are all the coefficients of $\operatorname{Char}_H(x)$.
Furthermore, since the entries of $H$ are algebraic integers from the cyclotomic field $\mathbb Q[\zeta]$, so are all the coefficients of $\operatorname{Char}_H(x)$.
Together, this means that the coefficients of $\operatorname{Char}_H(x)$ are real algebraic integers from $\mathbb Q[\zeta]$.
In other words, we have $\operatorname{Char}_H(x) \in \mathbb Z[\zeta+\zeta^{-1}][x]$ (see \cite[Proposition 2.16]{Washington97}).

Define $\rho := (1-\zeta)(1-\zeta^{-1})$.
We denote by $\mathcal X_n(q,e)$ the set of congruence classes of $\operatorname{Char}_H(x)$ modulo the principal ideal $\rho^e\mathbb Z[\zeta+\zeta^{-1}][x]$ where $H \in \mathcal H_n(q)$.
Note that when $q=2$, we have $\rho^e\mathbb Z[\zeta+\zeta^{-1}][x] = 2^{2e} \mathbb Z[x]$.
Our first main result corresponds to restrictions on the coefficients of the characteristic polynomials of matrices in $\mathcal H_n(q)$, which results in upper bounds for the cardinality of $\mathcal X_n(q,e)$.

\begin{theorem}
    \label{thm:mainBounds}
    Let $n, e, f \in \mathbb N$.
    \begin{enumerate}
        \item[(a)] If $n$ is even then $|\mathcal X_n(2,e/2)| \leqslant 2^{\lfloor e^2/2\rfloor -e+1}$.

        \item[(b)] If $n$ is odd then $|\mathcal X_n(2,e/2)| \leqslant 2^{\lceil e^2/2\rceil -e+1}$. 
        \item[(c)] If $p$ is an odd prime then $|\mathcal X_n(p^f,e)| \leqslant p^{(e-1)^2}$.
        \item[(d)] If $f > 1$ and $n$ is even then $|\mathcal X_n(2^f,e)| \leqslant 2^{(e-1)(e-2)+\lceil 2^{2-f}e \rceil - 1}$.
        \item[(e)] If  $f > 1$ and $n$ is odd then $|\mathcal X_n(2^f,e)| \leqslant 2^{(e-1)^2+\lceil 2^{2-f}e \rceil - 1-\lfloor e/4\rfloor}$.
    \end{enumerate}
\end{theorem}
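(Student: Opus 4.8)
\noindent
The plan is to analyse $\chi_H(x)$ through the entrywise congruence $H \equiv J \pmod{1-\zeta}$, which holds because every power of $\zeta$ is congruent to $1$ modulo $1-\zeta$ in $\mathbb Z[\zeta]$. Write $H = J - N$, so that every entry of $N$ lies in the ideal $(1-\zeta)\mathbb Z[\zeta]$, and expand $\chi_H$ --- either by multilinearity of $\det(xI - J + N)$ in the rows taken from $N$, or through the power sums $\operatorname{tr}(H^k) = \operatorname{tr}((J-N)^k)$, or through the generating identity $\det(I - tH) = (1-tn)\,\det\!\bigl(I + t(I + \tfrac{t}{1-tn}J)N\bigr)$, which follows from $(I-tJ)^{-1} = I + \tfrac{t}{1-tn}J$ --- and group the terms according to the number $m$ of factors they take from $N$. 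A term using $m$ such factors lies in $(1-\zeta)^m\mathbb Z[\zeta]$, and each ``$m$-part'' of a coefficient of $\chi_H$ is real (reversal of closed walks conjugates it), so, since the coefficients of $\chi_H$ lie in $\mathbb Z[\zeta+\zeta^{-1}]$ and the ramification of $1-\zeta$ over $\mathbb Z[\zeta+\zeta^{-1}]$ gives $(1-\zeta)^{2e}\mathbb Z[\zeta] \cap \mathbb Z[\zeta+\zeta^{-1}] = \rho^e\mathbb Z[\zeta+\zeta^{-1}]$, only the parts with $m < 2e$ survive modulo $\rho^e$. Thus $\chi_H \bmod \rho^e$ is a finite sum of contributions, each assembled from fewer than $2e$ entries of $N$.

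\noindent
Next I would feed this into the generalised Harary--Schwenk relation proved in this paper, which expresses $\operatorname{tr}(H^k)$ as a universal integer combination of ``closed-walk sums'' along short closed walks --- equivalently, it supplies, modulo $\rho^e$, a recurrence for $\operatorname{tr}(H^k)$ of order bounded in terms of $e$ only, reflecting that modulo $1-\zeta$ the matrix $H$ satisfies the degree-two relation $H(H - nI) \equiv 0$. The effect is to collapse the long runs of $J$'s inside each surviving contribution and to show that $\chi_H(x) \bmod \rho^e$ is a \emph{universal} polynomial --- with coefficients independent of $n$ and of $H$ --- in $n$, in $\operatorname{tr} H$, and in a finite list $\nu_1, \dots, \nu_M$ of closed-walk invariants of $N$ (sums, over closed walks using fewer than $2e$ of the entries of $N$, of the corresponding products), with $M$ depending on $q$ and $e$ alone. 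Here the constraint $\operatorname{tr}(H^2) = \sum_{i,j} |H_{ij}|^2 = n^2$ already pins down one coefficient exactly, and for $q$ odd the only real power of $\zeta$ is $1$, so the diagonal of $H$ contributes only the fixed term $\operatorname{tr} H = n$; in all cases the diagonal affects $\chi_H \bmod \rho^e$ only through boundedly many residues of $\operatorname{tr} H$.

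\noindent
The final step is to bound the number of values of the tuple $(\operatorname{tr} H, \nu_1, \dots, \nu_M)$, reduced modulo the relevant powers of $1-\zeta$. Each $\nu_i$ is a sum of products of fewer than $2e$ factors of the form $1 - \zeta^{a}$, each lying in $(1-\zeta)$, so a valuation count shows that $\nu_i$ influences $\chi_H \bmod \rho^e$ only through its residue modulo a power of $1-\zeta$ well below $2e$; the admissible residues are then cut down further by the parity of $n$ (through the handshake-type identities packaged in the Harary--Schwenk relation) and by the residue field of $\mathbb Z[\zeta+\zeta^{-1}]$ at the prime above $p$, which is $\mathbb F_2$ when $q$ is a power of $2$ and $\mathbb F_p$ when $q = p^f$. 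Carrying out this count yields $p^{(e-1)^2}$ in the odd-prime-power case (where $\operatorname{tr} H$ contributes nothing) and, when $q$ is a power of $2$ (so the ambient ring is a quotient of $\mathbb Z$ by a power of $4$), the split into the exponents $\lfloor e^2/2\rfloor - e + 1$ and $\lceil e^2/2\rceil - e + 1$ according to the parity of $n$; the mixed bounds for $q = 2^f$ with $f > 1$ arise by combining the two analyses, the extra $\lceil 2^{2-f}e\rceil$ and $\lfloor e/4\rfloor$ terms coming from the finer ramification structure of $\mathbb Z[\zeta_{2^f}+\zeta_{2^f}^{-1}]$.

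\noindent
I expect the crux to be the middle two steps: proving the generalised Harary--Schwenk relation in precisely the form that exhibits $\chi_H \bmod \rho^e$ as a universal polynomial in $n$, $\operatorname{tr} H$, and boundedly many walk-invariants of $N$, and then the valuation bookkeeping that decides exactly which residues of those invariants are attainable. That bookkeeping --- rather than any soft argument --- is what produces the exact exponents, and it is where the dependence on the parity of $n$ and on whether $q$ is a power of $2$ or an odd prime power is actually located.
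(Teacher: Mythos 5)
Your overall architecture --- write $H=J-N$ with $N$ entrywise in $(1-\zeta)\mathbb Z[\zeta]$, grade the expansion of $\chi_H$ by the number of factors taken from $N$, use Hermiticity/realness to push odd $(1-\zeta)$-valuations up to the next power of $\rho$, and then count residues coefficient by coefficient --- is genuinely the same skeleton the paper uses (its $A=(J-H)/(1-\zeta)$ is your $N/(1-\zeta)$, and its Lemma on $\det H\in(1-\zeta)^{n-1}\mathbb Z[\zeta]$ together with the proposition identifying $((1-\zeta)\mathbb Z[\zeta])\cap\mathbb R$ with $\rho\mathbb Z[\zeta+\zeta^{-1}]$ is exactly your valuation-plus-realness step). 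That part of the plan would deliver parts (c) and (d) once the bookkeeping is done. But two of the five parts rest on ideas your sketch does not contain. First, the distinction between (a) and (b) --- the refinement $a_k\in 2^k\mathbb Z$ when $n+k$ is odd --- does not come from a Harary--Schwenk or handshake identity. The paper gets it from the adjugate identity $\operatorname{adj}(H)H=\det(H)I$, which for symmetric $H$ forces $\sum_i\det(H[i])\in 2^{n-1}\mathbb Z$, followed by a double count of $(l-1)\times(l-1)$ principal submatrices inside $l\times l$ ones; the factor $n-l+1$ appearing in that count is where the parity of $n$ enters. Nothing in your walk-based framework produces this, and without it you only get the weaker exponent $e-k+1$ for every $k$, which is not enough to separate (a) from (b).

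Second, and more seriously, the $-\lfloor e/4\rfloor$ saving in part (e) is misattributed. It does not come from the ramification of $\mathbb Z[\zeta_{2^f}+\zeta_{2^f}^{-1}]$ (that accounts only for the $\lceil 2^{2-f}e\rceil-1$ term, via the intersection of $\rho^j\mathbb Z[\zeta+\zeta^{-1}]$ with $\mathbb Z$, which constrains the integer $a_1$). The actual source is a rigidity statement: after replacing $H$ by $DHD^{-1}$ for a suitable diagonal $D$ --- which leaves $\chi_H$ unchanged and, by the generalisation of Seidel's theorem, can be chosen so that the \emph{residue graph} of $H$ is an Euler graph --- one proves that each coefficient $a_{4k-1}$ is completely \emph{determined} modulo $\rho^{\lceil(4k-1)/2\rceil}$ by $a_1,\dots,a_{4k-3}$. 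That determination requires the refined trace congruence $\operatorname{tr}(A^{N/2})^2-2(1-\zeta)^{-1}\operatorname{tr}(A^{N/2})+\operatorname{tr}(A^N)\in 2(1-\zeta)\mathbb Z[\zeta]$ for $N\equiv 2\pmod 4$ (valid only under the Euler hypothesis), pushed through Newton's identities with explicit $2$-adic control of the multinomial coefficients $c(\mathbf x)$. Your proposal contains neither the switching-to-an-Euler-graph normalisation nor any mechanism by which one coefficient becomes a function of the earlier ones; without that, your count can at best reach the exponent $(e-1)^2+\lceil 2^{2-f}e\rceil-1$, missing the $\lfloor e/4\rfloor$. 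Relatedly, the ``universal polynomial in boundedly many walk invariants'' reduction is asserted rather than proved, and it is precisely in converting trace congruences into coefficient congruences at the prime $2$ that the hard valuation work lives.
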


Note that, for a non-prime-power $q$ and any $e \in \mathbb N$, the ideal $\rho^e \mathbb Z[\zeta+\zeta^{-1}]$ is equal to $\mathbb Z[\zeta+\zeta^{-1}]$.
Therefore, $\mathcal X_n(q,e)$ consists of just one element when $q$ is not a prime power and, furthermore, every polynomial in $\mathbb Z[\zeta+\zeta^{-1}][x]$ of degree $n$ belongs to the sole congruence class of $\mathcal X_n(q,e)$.
In contrast, for $q>2$ a power of a prime $p$, the ideal $\rho \mathbb Z[\zeta+\zeta^{-1}]$ is a prime ideal (see Section~\ref{sec:prelims}) of $\mathbb Z[\zeta+\zeta^{-1}]$ and, for any $n, e \in \mathbb N$, a large proportion of polynomials in $\mathbb Z[\zeta+\zeta^{-1}][x]$ of degree $n$ do not belong to any congruence class in $\mathcal X_n(q,e)$.
Loosely speaking, this means that the coefficients of the characteristic polynomial of $H \in \mathcal H_n(q)$ are more restricted when $q$ is a prime power.

Denote by $\mathcal X^\prime_n(q,e)$ the set of congruence classes of $\operatorname{Char}_S(x)$ modulo the principal ideal $\rho^e\mathbb Z[\zeta+\zeta^{-1}][x]$ where $S$ is a $q$-Seidel matrix of order $n$.
Note that bounds for $\mathcal X^\prime_n(2,e/2)$ were obtained by Greaves and Yatsyna~\cite{GreavesYatsyna19}.
Our second main result corresponds to restrictions on the coefficients of the characteristic polynomials of $q$-Seidel matrices, which results in upper bounds for the cardinality of $\mathcal X^\prime_n(q,e)$.

\begin{theorem}
    \label{thm:mainBounds2}
    Let $n, e, f \in \mathbb N$.
    \begin{enumerate}
        \item[(a)] If $n$ is even then $|\mathcal X^\prime_n(2,e/2)| \leqslant 2^{(e-2)(e-3)/2}$.
        \item[(b)] If $n$ is odd then $|\mathcal X^\prime_n(2,e/2)| \leqslant 2^{(e^2-5e+8)/2}$.
        \item[(c)] If $p$ is an odd prime then $|\mathcal X^\prime_n(p^f,e)| \leqslant p^{(e-1)^2}$.
        \item[(d)] If $f>1$ and $n$ is even then $|\mathcal X^\prime_n(2^f,e)| \leqslant 2^{(e-1)(e-2)}$.
        \item[(e)] If $f>1$ and $n$ is odd then $|\mathcal X^\prime_n(2^f,e)| \leqslant 2^{(e-1)^2-\lfloor e/4\rfloor}$.
    \end{enumerate}
\end{theorem}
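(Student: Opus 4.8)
The plan is to deduce Theorem~\ref{thm:mainBounds2} from Theorem~\ref{thm:mainBounds} together with the machinery behind its proof, using the correspondence $S \mapsto S+I$ noted in the introduction. The first step is to record that $S \mapsto S+I$ is a bijection from the set of $q$-Seidel matrices of order $n$ onto the set $\mathcal H_n^{\mathbf{1}}(q)$ of matrices $H \in \mathcal H_n(q)$ all of whose diagonal entries equal $1$, and that $\chi_{S+I}(x) = \det\bigl((x-1)I - S\bigr) = \chi_S(x-1)$. Since $x \mapsto x-1$ is a ring automorphism of $\mathbb Z[\zeta+\zeta^{-1}][x]$ that preserves the ideal $\rho^e\mathbb Z[\zeta+\zeta^{-1}][x]$, it induces a bijection on residue classes modulo that ideal; hence $|\mathcal X^\prime_n(q,e)|$ equals the number of residue classes modulo $\rho^e\mathbb Z[\zeta+\zeta^{-1}][x]$ of the polynomials $\chi_H$ with $H \in \mathcal H_n^{\mathbf{1}}(q)$, and in particular $|\mathcal X^\prime_n(q,e)| \leqslant |\mathcal X_n(q,e)|$. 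So the whole task reduces to re-running the count of characteristic-polynomial classes under the extra hypothesis that the diagonal of $H$ is $(1,\dots,1)$, which in particular forces $\operatorname{tr}(H) = n$.

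For $q = p^f$ with $p$ an odd prime, the only real $p^f$-th root of unity is $1$, so every $H \in \mathcal H_n(p^f)$ already lies in $\mathcal H_n^{\mathbf{1}}(p^f)$; then $\mathcal X^\prime_n(p^f,e)$ is in bijection with $\mathcal X_n(p^f,e)$ and part~(c) follows at once from Theorem~\ref{thm:mainBounds}(c). For $q = 2^f$ the real $2^f$-th roots of unity are $\pm 1$, so fixing the diagonal is a genuine constraint, and I would revisit the proof of Theorem~\ref{thm:mainBounds}. That proof bounds $|\mathcal X_n(2^f,e)|$ by bounding, via Newton's identities and our generalisation of the Harary--Schwenk relation, the number of possible residue sequences $(\operatorname{tr}(H^k) \bmod \rho^e)_{k=1}^n$: the Harary--Schwenk relation expresses each $\operatorname{tr}(H^k)$ modulo $\rho^e$ in terms of $n$, a bounded amount of low-degree ``walk data'', and the residues of finitely many lower power sums modulo strictly smaller powers of $\rho$, and the exponent in Theorem~\ref{thm:mainBounds} is precisely the resulting number of free choices. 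Imposing the all-ones diagonal pins $\operatorname{tr}(H) = n$ and constrains the walk data attached to the diagonal; feeding this into the Harary--Schwenk bookkeeping removes some of the free choices and narrows the admissible ranges of others. Carrying the computation through should produce a drop of $\lceil 2^{2-f}e\rceil - 1$ in the exponent when $f>1$, and the larger drops recorded in parts (a) and (b) when $f=1$ (where $\rho = 4$). The parity of $n$ must be tracked throughout, since the $n$-dependent terms in the Harary--Schwenk relation reduce differently modulo powers of $\rho$ for $n$ even and for $n$ odd; this is the source of the split into (a)/(b) and (d)/(e).

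The hard part is this final bookkeeping. I would need to verify that our Harary--Schwenk relation is sharp enough for the constraint $\operatorname{tr}(H) = n$ to propagate to exactly the claimed number of higher power sums, and to pin down the floor and ceiling corrections ($\lceil 2^{2-f}e\rceil$, $\lfloor e/4\rfloor$, and the quadratic-in-$e$ terms of parts (a) and (b)). This forces one to work inside the proof of Theorem~\ref{thm:mainBounds} rather than using that theorem as a black box, and to handle both parities of $n$ in parallel.
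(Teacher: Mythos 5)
Your reduction via $S \mapsto S+I$ and the substitution $x \mapsto x-1$ is exactly the paper's opening move for parts (c), (d) and (e): the paper likewise observes that $|\mathcal X'_n(q,e)|$ equals the number of congruence classes of $\chi_H(x)$ over those $H \in \mathcal H_n(q)$ whose diagonal entries all equal $1$, and then reruns the count from Theorem~\ref{thm:mainBounds} with $a_1=-n$ (and hence $a_2$) pinned. Your accounting for (d) and (e) is correct: the only place the bounds of Theorem~\ref{thm:mainBounds}(d),(e) use freedom in the diagonal is the factor $2^{\lceil 2^{2-f}e\rceil-1}$ for $a_1$ coming from Remarks~\ref{rem:a0a1a2} and~\ref{rem:rationalElements}, and deleting it gives precisely $2^{(e-1)(e-2)}$ and $2^{(e-1)^2-\lfloor e/4\rfloor}$. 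Your observation that for odd $p$ every $H\in\mathcal H_n(p^f)$ already has unit diagonal, so that (c) follows from Theorem~\ref{thm:mainBounds}(c) as a black box, is also fine and slightly cleaner than the paper's phrasing. One correction of emphasis: parts (c) and (d) of Theorem~\ref{thm:mainBounds} are proved through Corollary~\ref{cor:2powerCoeffs} (determinant congruences for the matrix $A$), not through residue sequences of power sums; the Harary--Schwenk machinery enters only in part (e).

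The genuine gap is in parts (a) and (b). The paper does not derive these from Theorem~\ref{thm:mainBounds}; it cites Corollaries 3.4 and 3.13 of \cite{GreavesYatsyna19}. Your proposed route cannot reach the stated bounds: pinning $\operatorname{tr}(H)=n$ in the proof of Theorem~\ref{thm:mainBounds}(a) removes exactly the factor $2^{e-1}$, leaving an exponent of $\lfloor e^2/2\rfloor-2e+2$, which for even $e$ equals $(e-2)^2/2$ and exceeds the target $(e-2)(e-3)/2$ by $(e-2)/2$; the deficit in part (b) is similar and also grows linearly in $e$. These extra savings are not consequences of the trace constraint: they come from Seidel-specific congruences in \cite{GreavesYatsyna19} that determine roughly every other odd-indexed coefficient one further power of $2$ --- the $q=2$ analogue of what Proposition~\ref{pro:uniqueEuler}, Lemma~\ref{lem:quadtracCong} and Corollary~\ref{cor:a_4k-1} accomplish for $q=2^f$ with $f>1$ in Section~\ref{sec:euler}. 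So for (a) and (b) you must either quote those results or rebuild the Euler-graph and Harary--Schwenk argument in the real case; feeding $\operatorname{tr}(H)=n$ into the bookkeeping of Section~\ref{sec:real} is not enough.
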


It is interesting to observe that, in most cases, the bounds for $|\mathcal X_n(p^f,e)|$ and $|\mathcal X^\prime_n(p^f,e)|$ in Theorems~\ref{thm:mainBounds} and \ref{thm:mainBounds2} depend only on the prime $p$ and the exponent $e$ but not on the exponent $f$.
There are exceptions with part (d) and (e) in Theorem~\ref{thm:mainBounds} when $p = 2$.
For either parity of $n$, there are two different behaviours for the upper bounds of $|\mathcal X_n(p^f,e)|$ and $|\mathcal X^\prime_{n}(2^f,e)|$, depending on if $f=1$ or $f > 1$. 

By conducting computer experiments for small values ($n = 21$ and $e = 3,4$) of $n$ and $e$, we have empirical evidence that suggests the bounds in Theorem~\ref{thm:mainBounds} and Theorem~\ref{thm:mainBounds2} are sharp for fixed $q$ and $e \geqslant 3$, when $n$ is large enough compared to $e$.
We conjecture this to be the case.
\begin{conjecture}
For $e \in \mathbb N$ with $e \geqslant 3$ and $q$ a prime power, there exists $N \in \mathbb N$ such that for all $n \geqslant N$ we have equality in the corresponding bounds in Theorem~\ref{thm:mainBounds} and Theorem~\ref{thm:mainBounds2}.
\end{conjecture}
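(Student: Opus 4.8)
This is a conjecture, so what follows is a proposed program rather than a proof. The plan is to complement the (already established) upper bounds of Theorems~\ref{thm:mainBounds} and~\ref{thm:mainBounds2} with a matching lower bound, which amounts to a realizability statement: the proof of Theorem~\ref{thm:mainBounds} places $\mathcal X_n(q,e)$ inside a set $T_e$ of exactly the stated cardinality (the residue tuples cut out by the generalised Harary--Schwenk relation, $T_e$ depending in cases (a),(b),(d),(e) on the parity of $n$), and the conjecture is that this containment becomes an equality once $n$ is large. The program splits into a \emph{stability in $n$} step and a \emph{realizability} step.

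For stability I would use tensoring with an all-ones block. Fix $m\equiv 1\pmod{\rho^e}$ with $m$ odd --- possible since $q$ is a prime power (directly $m=1+2^K$ when $p=2$, and oddness matters only because the bounds depend on the parity of $n$). For $H\in\mathcal H_n(q)$ the matrix $H\otimes J_m$ again has all entries in $\mathcal C_q$ (because $\mathcal C_q$ is a group), is Hermitean, and has spectrum $\{m\lambda:\lambda\in\operatorname{spec}(H)\}$ together with $0$ of multiplicity $n(m-1)$, so $\chi_{H\otimes J_m}(x)\equiv x^{n(m-1)}\chi_H(x)\pmod{\rho^e\mathbb Z[\zeta+\zeta^{-1}][x]}$. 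Since multiplication by the monic polynomial $x^{n(m-1)}$ is injective on $\mathbb Z[\zeta+\zeta^{-1}][x]/\rho^e$, this gives $|\mathcal X_{nm}(q,e)|\ge|\mathcal X_n(q,e)|$, and the same within the diagonal-$\mathbf 1$ subclass of $\mathcal H$, which transfers to $\mathcal X^\prime$ via $S\mapsto S+I$ (so $\chi_S(x)=\chi_{S+I}(x+1)$, a change of variable respecting $\rho^e$). As the bounds cap these sequences, they are eventually constant. Upgrading ``constant along $n\mapsto mn$'' to ``constant for all large $n$ of a fixed parity'' is a technical loose end; it should be closed by also exhibiting a bounded-size bordering operation $H\mapsto\left(\begin{smallmatrix}H&V\\V^*&D\end{smallmatrix}\right)\in\mathcal H_{n+c}(q)$ whose effect on $\chi_H\bmod\rho^e$ is again by an injective map, for which one must work around the fact that $\mathcal H_n(q)$ permits no zero entries (so honest direct sums are unavailable) by choosing the border $V$ so that the added eigenvalues are controlled modulo $\rho^e$.

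For realizability it then suffices to hit every tuple of $T_e$ for one sufficiently large $n$. From the upper-bound proof, $T_e$ is parametrised by roughly $e^2$ free $\rho$-adic digits of the low-index coefficients $c_1,\dots,c_{2e-2}$ of $\chi_H$, equivalently of the truncated trace vector $(\operatorname{tr}H,\operatorname{tr}H^2,\dots,\operatorname{tr}H^{2e-2})\bmod\rho^e$, subject to the divisibility constraints coming from the generalised Harary--Schwenk relation. For each admissible digit I would build a bounded ``gadget'': a small connected $\mathcal C_q$-matrix (for instance a weighted cycle of length $\le 2e-2$, whose contribution to $\operatorname{tr}H^k$ via the two closed walks around it is $2\operatorname{Re}(\textstyle\prod\omega)$ and can thereby be tuned) together with a prescription for attaching it to a fixed base matrix so that, modulo $\rho^e$, it perturbs exactly the intended digit and nothing of smaller index. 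With $n$ large the gadgets sit on disjoint vertex sets, so, up to the harmless $x^{n(m-1)}$-factor from the stability step, their contributions to $\chi\bmod\rho^e$ compose, and running over all combinations would realize all of $T_e$.

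The main obstacle is the surjectivity hidden in the last step: proving that these gadgets generate the \emph{whole} admissible set, not a proper submonoid. Concretely one needs (i) that single-vertex/single-edge modifications and small cycle gadgets perturb the vector $(\operatorname{tr}H^k\bmod\rho^e)_{k\le 2e-2}$ through a set that spans the $\mathbb Z[\zeta+\zeta^{-1}]/\rho^e$-module cut out by the Harary--Schwenk constraints, and (ii) control of the cross-terms $\operatorname{tr}((H+\Delta_1+\dots+\Delta_r)^k)-\sum_i\operatorname{tr}((H+\Delta_i)^k)$ that couple distinct gadgets, showing they vanish or are predictable modulo $\rho^e$. An alternative that bypasses explicit gadgets is an equidistribution argument --- show that for uniformly random $H\in\mathcal H_n(q)$ the truncated trace vector becomes equidistributed over $T_e$ as $n\to\infty$, which forces every class to be attained --- but then the obstacle migrates to bounding the character sums $\sum_{H\in\mathcal H_n(q)}\psi\bigl(\sum_{k\le 2e-2}t_k\operatorname{tr}H^k\bigr)$ over additive characters $\psi$ of $\mathbb Z[\zeta+\zeta^{-1}]/\rho^e$ and proving cancellation for the non-principal ones. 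Either way, the instances $e=3,4$ confirmed by the stated computer experiments would serve as the base of the argument.
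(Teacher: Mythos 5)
The statement you are addressing is a \emph{conjecture}: the paper offers no proof of it, only empirical support from computer searches at $n=21$, $e\in\{3,4\}$. So there is no argument in the paper to compare yours against, and the only question is whether your proposal closes the gap. It does not, and you say so yourself: what you give is a two-step program (stability in $n$, then realizability) in which the second step is precisely the open content of the conjecture. The stability half has a sound core --- $H\otimes J_m$ with $m\equiv 1\pmod{\rho^e\mathbb Z[\zeta+\zeta^{-1}]\cap\mathbb Z}$ does stay in $\mathcal H_{nm}(q)$, gives $\chi_{H\otimes J_m}(x)=x^{n(m-1)}m^n\chi_H(x/m)\equiv x^{n(m-1)}\chi_H(x)$, and multiplication by a power of $x$ is injective on residue classes, so $|\mathcal X_n(q,e)|\leqslant|\mathcal X_{nm}(q,e)|$ --- but even granting the parity bookkeeping and the bordering step you flag as a loose end, eventual constancy of a bounded nondecreasing sequence only says the count stabilises at \emph{some} value $\leqslant$ the bound, not at the bound itself. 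Everything therefore rests on realizability, which you leave as an acknowledged obstacle.

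Within the realizability sketch there is one point worth stressing beyond your own caveats: matrices in $\mathcal H_n(q)$ have \emph{every} entry a root of unity, so ``gadgets on disjoint vertex sets'' cannot be isolated from one another --- the entries joining distinct gadgets are forced to be units of modulus one, and they feed into every trace $\operatorname{tr}(H^k)$ through closed walks that visit several gadgets. You note this for the bordering operation but then rely on additivity of gadget contributions in the surjectivity step; your obstacle (ii) (controlling the cross-terms modulo $\rho^e$) is not a technicality to be cleaned up afterwards but the same difficulty that makes the bordering step hard, and nothing in the proposal indicates why these cross-terms should be predictable. In short: the proposal is a reasonable research program, consistent with how the upper bounds are derived in the paper, but it is not a proof, and the conjecture remains open after it.
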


In order to prove our main results, we establish a generalisation of a classical result of Harary and Schwenk~\cite[Corollary 5a]{hs79}, which was rediscovered recently in \cite[Lemma 2.2]{GreavesYatsyna19}. 
For the convenience of the reader, we state this result here.
Throughout, we use $\phi$ to denote Euler's totient function and $\mathbf 1$ to denote the all-ones (column) vector. 

\begin{theorem}
\label{thm:hs}
    Let $\Gamma$ be a graph with adjacency matrix $A$ and
let $N \geqslant 3$ be an integer. Then
\begin{itemize}
    \item $\displaystyle \sum_{d | N} \phi(N/d) \operatorname{tr}(A^d) \equiv 0 \pmod {2N} \text{ if $N$ is odd}$;
    \item $\displaystyle \sum_{d | N} \phi(N/d) \operatorname{tr}(A^d) + \frac{N}{2} \mathbf 1^\top A \mathbf 1 \equiv 0 \pmod {2N} \text{ if $N$ is even.}$
\end{itemize}
\end{theorem}

The paper is organised as follows.
In Section~\ref{sec:prelims}, we recall some preliminary material about cyclotomic fields that will be required in the later sections of the paper.
In Section~\ref{sec:real}, we consider the real case, i.e., when $q = 2$ and prove parts (a) and (b) of our main theorems.
In Section~\ref{sec:graphs}, we introduce our graph-theoretic tools and generalise Theorem~\ref{thm:hs}.
In Section~\ref{sec:dets}, we produce results about the determinant of Hermitian matrices of roots of unity and provide proofs of parts (c) and (d) of our main theorems.
Finally, in Section~\ref{sec:euler}, we introduce the residue graph of a matrix in $\mathcal H_n(q)$ and, using so-called Euler graphs, we prove part (e) of our main theorems.

\section{Algebraic preliminaries}
\label{sec:prelims}

In this section, we develop some basic theory of ideals of cyclotomic fields.
A complex number $\alpha \in \mathbb C$ is called an \textbf{algebraic integer} if $\alpha$ is a zero of some monic integer polynomial.
Let $\zeta$ be a primitive $q$th root of unity.
The field extension $\mathbb Q[\zeta]$ of $\mathbb Q$ generated by $\zeta$ is called a \textbf{cyclotomic field}.
The set of algebraic integers in $\mathbb Q[\zeta]$ forms a subring called \textbf{ring of integers} of $\mathbb Q[\zeta]$.

First we list some basic facts that can be found in most introductory textbooks on algebraic number theory, for example \cite[Chapters 1 and 2]{Washington97}.

\begin{proposition}
\label{pro:idealprelim}
    Let $q > 2$ be a natural number and let $\zeta$ be a primitive $q$th root of unity.
    \begin{itemize}
        \item[(a)] $\mathbb Z[\zeta]$ is the ring of integers of the cyclotomic field $\mathbb Q[\zeta]$;
        \item[(b)] $\mathbb Z[\zeta+\zeta^{-1}]$ is the ring of integers of the field $\mathbb Q[\zeta] \cap \mathbb R$;
        \item[(c)] $\{1,\zeta+\zeta^{-1},\dots,(\zeta+\zeta^{-1})^{\phi(q)/2}\}$ is a basis for $\mathbb Z[\zeta+\zeta^{-1}]$ as a $\mathbb Z$-module;
        \item[(d)] $\{1,\zeta\}$ is a basis for $\mathbb Z[\zeta]$ as a $\mathbb Z[\zeta+\zeta^{-1}]$-module;
        \item[(e)] If $q$ is a prime power then $(1-\zeta)\mathbb Z[\zeta]$ is a prime ideal of $\mathbb Z[\zeta]$ otherwise $(1-\zeta)\mathbb Z[\zeta] = \mathbb Z[\zeta]$;
        \item[(f)] If $q$ is a power of a prime $p$ then $p \mathbb Z[\zeta] = (1-\zeta)^{\phi(q)}\mathbb Z[\zeta]$ and $((1-\zeta)\mathbb Z[\zeta])\cap \mathbb Z = p\mathbb Z$;
        \item[(g)] If $q$ is a power of $2$ then for any $\alpha, \beta \not \in (1-\zeta)\mathbb Z[\zeta]$ we have $\alpha + \beta \in (1-\zeta)\mathbb Z[\zeta]$.
    \end{itemize}
\end{proposition}


Next we prove a proposition about the maximum real subset of the ideal $(1-\zeta)\mathbb Z[\zeta]$.

\begin{proposition}
\label{pro:realIdeal}
Let $q>2$ be a power of a prime $p$ and let $\zeta$ be a primitive $q$th root of unity.
Then

$$((1-\zeta)\mathbb Z[\zeta])\cap \mathbb R = \rho \mathbb Z[\zeta+\zeta^{-1}].$$
\end{proposition}
\begin{proof}
    Let $\alpha \in ((1-\zeta)\mathbb Z[\zeta])\cap \mathbb R$.
    Then, by Proposition~\ref{pro:idealprelim} (b), $\alpha$ must be in $\mathbb Z[\zeta+\zeta^{-1}]$, since it is an algebraic integer in $\mathbb Q[\zeta] \cap \mathbb R$.
    Next observe that $\mathbb Z[\zeta+\zeta^{-1}] = \mathbb Z[\rho]$.
    Hence, we can write $\alpha$ as an integer linear combination of powers of $\rho$:
    \[
    \alpha = \sum_{i=0}^{\phi(q)/2-1}a_i \rho^i.
    \]
    Since $(1-\zeta)\mathbb Z[\zeta]$ is a prime ideal, we must have $a_0 \in (1-\zeta)\mathbb Z[\zeta]$.
    The proposition follows since $((1-\zeta)\mathbb Z[\zeta])\cap \mathbb Z = p\mathbb Z \subset \rho \mathbb Z[\zeta+\zeta^{-1}]$.
\end{proof}

Now we prove a key isomorphism of quotient rings.

\begin{proposition}
\label{pro:Idealcor}
Let $q>2$ be a power of a prime $p$ and let $\zeta$ be a primitive $q$th root of unity.
Then
\[
\frac{\mathbb Z[\zeta]}{(1-\zeta)\mathbb Z[\zeta]} \cong \frac{\mathbb Z[\zeta+\zeta^{-1}]}{\rho \mathbb Z[\zeta+\zeta^{-1}]} \cong \frac{\mathbb Z}{p\mathbb Z}.
\]
\end{proposition}
\begin{proof}
The isomorphism $\frac{\mathbb Z[\zeta]}{(1-\zeta)\mathbb Z[\zeta]} \cong \frac{\mathbb Z}{p\mathbb Z}$ is standard. 
Thus we only prove the left isomorphism.

First note that $\mathbb Z[\zeta] = \mathbb Z[\zeta+\zeta^{-1}] + \zeta \mathbb Z[\zeta+\zeta^{-1}]$ (from Proposition~\ref{pro:idealprelim} (d)).
Define the ring homomorphism $\tau : \mathbb Z[\zeta] \to \mathbb Z[\zeta+\zeta^{-1}]/\rho \mathbb Z[\zeta+\zeta^{-1}]$ by
$a + b\zeta \mapsto a + b + \rho \mathbb Z[\zeta+\zeta^{-1}]$, where $a$ and $b$ are in $\mathbb Z[\zeta+\zeta^{-1}]$.
It is straightforward to check that $\tau$ is indeed a ring homomorphism.

If $a + b\zeta \in (1-\zeta)\mathbb Z[\zeta]$ then $\tau(a + b\zeta) = \rho \mathbb Z[\zeta+\zeta^{-1}]$.
Indeed, we have $a = (1-\zeta)\alpha - b\zeta$ for some $\alpha \in \mathbb Z[\zeta]$.
Hence, 
$$\tau(a + b\zeta)=a+b + \rho \mathbb Z[\zeta+\zeta^{-1}]=(1-\zeta)(\alpha+b)+\rho \mathbb Z[\zeta+\zeta^{-1}] = \rho \mathbb Z[\zeta+\zeta^{-1}],$$
by Proposition~\ref{pro:realIdeal}.
Thus $(1-\zeta)\mathbb Z[\zeta] \subset \ker \tau$.
On the other hand, if $a+b \in \rho \mathbb Z[\zeta+\zeta^{-1}]$ then $a + b\zeta \in (1-\zeta)\mathbb Z[\zeta]$.
Indeed, we have $b = \rho\alpha - a$ for some $\alpha \in \mathbb Z[\zeta+\zeta^{-1}]$.
Hence, $a + b\zeta = (1-\zeta)a+\rho \alpha \zeta \in (1-\zeta)\mathbb Z[\zeta]$.
Thus $\ker \tau \subset (1-\zeta)\mathbb Z[\zeta]$.

Now $\frac{\mathbb Z[\zeta]}{(1-\zeta)\mathbb Z[\zeta]} \cong \frac{\mathbb Z[\zeta+\zeta^{-1}]}{\rho \mathbb Z[\zeta+\zeta^{-1}]}$ follows from the first isomorphism theorem for rings.
\end{proof}

It follows from Proposition~\ref{pro:Idealcor} that, for $q > 2$, the ideal $\rho\mathbb Z[\zeta+\zeta^{-1}]$ is a prime ideal of $\mathbb Z [\zeta+\zeta^{-1}]$.

We conclude this section with a couple of remarks about counting residues classes.

\begin{remark}
\label{rem:countResidues}
    Let $q=p^e$ for some prime $p$ and $e \in \mathbb N$ and let $k \in \mathbb N$.
    Suppose $a \in \rho^{k}\mathbb Z[\zeta+\zeta^{-1}]$.
    Since, by Proposition~\ref{pro:Idealcor}, the quotient ring $\mathbb Z/p\mathbb Z$ is isomorphic to $\mathbb Z[\zeta+\zeta^{-1}]/\rho \mathbb Z[\zeta + \zeta^{-1}]$,
    there are $p$ possible residues for $a$ modulo $\rho^{k+1}\mathbb Z[\zeta+\zeta^{-1}]$.
\end{remark}

Next we remark about a rational ideal inside the ideal $\rho^e \mathbb Z[\zeta+\zeta^{-1}]$.

\begin{remark}
\label{rem:rationalElements}
    Let $q=p^f$ be a prime power with
     $f > 1$.
     Let $g, h \in \mathbb N \cup \{0\}$ with $h < \phi(q)/2$.
     it is straightforward to obtain the equality
     \[
     (\rho^{\phi(q)g/2+h} \mathbb Z[\zeta+\zeta^{-1}]) \cap \mathbb Z = \begin{cases}
         p^{g + 1}\mathbb Z, & \text{if $h > 0$;} \\
         p^{g}\mathbb Z, & \text{if $h = 0$}.
     \end{cases}
     \]
     Indeed, suppose that $a \in \mathbb Z$ belongs to $\rho^{\phi(q)g/2+h} \mathbb Z[\zeta+\zeta^{-1}]$.
     Using Proposition~\ref{pro:idealprelim} (f), we have $a \in p^g\rho^{h} \mathbb Z[\zeta+\zeta^{-1}]$.
     If $h=0$ then we are done.
     Otherwise, if $h>0$ then $a = p^g b$ for some $b \in (1-\zeta)\mathbb Z[\zeta+\zeta^{-1}$.
     By Proposition~\ref{pro:idealprelim} (f), we have $p \in (1-\zeta)\mathbb Z$.
     Since $b$ is an integer, it follows that $b \in p\mathbb Z$, as required.
     
     Therefore, if $a \in \mathbb Z$ then there are $p^{\lceil 2e/\phi(q) \rceil}$ possible residues for $a$ modulo  $\rho^{e}\mathbb Z[\zeta+\zeta^{-1}]$.
     Furthermore, if $a \in p\mathbb Z$ then there are $p^{\lceil 2e/\phi(q) \rceil-1}$ possible residues for $a$ modulo  $\rho^{e}\mathbb Z[\zeta+\zeta^{-1}]$.
\end{remark}

In the remainder of the paper $\zeta$ is to be understood as a primitive $q$th root of unity.

\section{The real case}
\label{sec:real}

In this section, we consider matrices in $\mathcal H_n(2)$, that is, symmetric $\{ \pm 1\}$-matrices.
The proofs of part (a) and part (b) of Theorem~\ref{thm:mainBounds} and Theorem~\ref{thm:mainBounds2} are provided herein.

First we remark about the top three coefficients of the characteristic polynomial of a matrix in $\mathcal H_n(q)$.

\begin{remark}
     \label{rem:a0a1a2}
    Let $H \in \mathcal H_n(q)$.
    Observe that the trace of $H^2$ is equal to $n^2$.
    Write $\operatorname{Char}_{H}(x)=\sum_{i=0}^na_ix^{n-i}$.
    Using Newton's identities, we see that $a_0=1$  
    and $a_2=(a_1^2-n^2)/2$.
    Furthermore, the parity of $a_1$ is the same as that of $n$.
\end{remark}

Next we show the determinant of a matrix in $H_n(2)$ is divisible by a large power of $2$.

\begin{lemma}
    \label{lem:detIdealEasy}
    Let $n \in \mathbb N$ and let $H$ be a $\{\pm 1\}$-matrix of order $n$.
    Then $\det (H) \in 2^{n-1}\mathbb Z$.
\end{lemma}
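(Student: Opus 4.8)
The plan is to extract a factor of $2$ from each of $n-1$ rows by elementary row operations, an argument familiar from the theory of Hadamard matrices. Assume $n \geqslant 2$, the case $n = 1$ being immediate since $2^{0} = 1$ and $\det(H) = \pm 1 \in \mathbb Z$. First I would subtract the first row of $H$ from each of rows $2, \dots, n$; call the resulting matrix $H'$. These operations leave the determinant unchanged, so $\det(H) = \det(H')$.

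Next I observe that all entries of rows $2, \dots, n$ of $H'$ are even: the $(i,j)$ entry equals $H_{ij} - H_{1j}$, and since $H_{ij}, H_{1j} \in \{\pm 1\}$ this difference lies in $\{-2, 0, 2\}$. Hence I can factor a $2$ out of each of these $n-1$ rows, obtaining $\det(H') = 2^{n-1}\det(H'')$, where $H''$ is the integer matrix whose first row is that of $H$ and whose row $i$ (for $i \geqslant 2$) is $(H_{i\cdot} - H_{1\cdot})/2$. Since $\det(H'') \in \mathbb Z$, we conclude $\det(H) = 2^{n-1}\det(H'') \in 2^{n-1}\mathbb Z$, as required.

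I do not anticipate any genuine obstacle here: the only point requiring a moment's thought is that a difference of two $\pm 1$'s is even, which is clear. I expect this lemma to serve merely as a base case for the finer divisibility statements underlying parts (a) and (b) of the main theorems, where one must additionally track the coefficients of $\chi_H(x)$ modulo powers of $\rho = (1-\zeta)(1-\zeta^{-1}) = 4$; the substantive work lies there rather than in this lemma.
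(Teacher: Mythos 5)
Your proof is correct and follows exactly the paper's argument: subtract the first row from the others, observe that the remaining $n-1$ rows become even, and extract a factor of $2$ from each. The paper's version is just a more terse rendering of the same row-reduction idea.
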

\begin{proof}
    Subtract the first row of $H$ from all other rows to obtain $H^\prime$.
    Then, except for the first row, each entry of $H^\prime$ is in $2\mathbb Z$.
    Thus $\det (H) = \det (H^\prime) \in  2^{n-1}\mathbb Z$.
\end{proof}

Since the coefficients of the characteristic polynomial of a matrix are the sums of its principal minors of the appropriate size, we immediately have the following corollary of Lemma~\ref{lem:detIdealEasy}.

\begin{corollary}
    \label{cor:detIdealEasy}
    Let $n \in \mathbb N$, $k \in \{1,\dots,n\}$, $H \in \mathcal H_n(2)$, and write $\operatorname{Char}_H(x)=\sum_{i=0}^n a_ix^{n-i}$.
    Then $a_{k} \in 2^{k-1} \mathbb Z$.
\end{corollary}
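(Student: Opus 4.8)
The plan is to reduce the statement to Lemma~\ref{lem:detIdealEasy} by invoking the standard expression of the coefficients of the characteristic polynomial in terms of principal minors, and then applying that lemma term by term.

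First I would recall the combinatorial identity for the coefficients: writing $\chi_H(x) = \det(xI - H) = \sum_{i=0}^n a_i x^{n-i}$, we have, for each $k \in \{1,\dots,n\}$,
\[
    a_k = (-1)^k \sum_{\substack{S \subseteq \{1,\dots,n\} \\ |S| = k}} \det H[S],
\]
where $H[S]$ denotes the principal submatrix of $H$ with rows and columns indexed by $S$. This follows either by directly expanding $\det(xI-H)$ and collecting the coefficient of $x^{n-k}$, or from the fact that $a_k = (-1)^k e_k(\lambda_1,\dots,\lambda_n)$, where $\lambda_1,\dots,\lambda_n$ are the eigenvalues of $H$ and $e_k$ is the $k$-th elementary symmetric polynomial, together with the classical identity $e_k(\lambda_1,\dots,\lambda_n) = \sum_{|S|=k} \det H[S]$.

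Next I would observe that each principal submatrix $H[S]$ with $|S| = k$ is again a $\{\pm 1\}$-matrix, now of order $k$ (in fact an element of $\mathcal H_k(2)$, but only the entry condition is needed). Hence Lemma~\ref{lem:detIdealEasy} applies and gives $\det H[S] \in 2^{k-1}\mathbb Z$ for every such $S$. Since $2^{k-1}\mathbb Z$ is an ideal of $\mathbb Z$, and in particular closed under addition, the sum over all $\binom{n}{k}$ subsets lies in $2^{k-1}\mathbb Z$; multiplying by the unit $(-1)^k$ then yields $a_k \in 2^{k-1}\mathbb Z$.

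There is no genuine obstacle here: this is a direct corollary. The only points requiring any care are fixing the sign and indexing conventions in the minor expansion of $\chi_H$, and noting explicitly that a principal submatrix of a $\{\pm 1\}$-matrix is again a $\{\pm 1\}$-matrix, so that Lemma~\ref{lem:detIdealEasy} can be applied verbatim to each summand.
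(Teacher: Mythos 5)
Your proof is correct and is exactly the paper's argument: the paper also deduces the corollary immediately from Lemma~\ref{lem:detIdealEasy} via the expression of $a_k$ as a signed sum of the $k\times k$ principal minors, each of which is the determinant of a $\{\pm 1\}$-matrix of order $k$. You have merely written out the details that the paper leaves implicit.
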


We will make use of the following lemma from \cite{GreavesYatsyna19}.

\begin{lemma}[{\cite[Corollary 3.2 and Lemma 3.8]{GreavesYatsyna19}}]
\label{lem:GGyats}
    Let $A$ be the adjacency matrix of a graph of order $n$ and write $\operatorname{Char}_{J-2A}(x) = \sum_{i=0}^n a_ix^{n-i}$. 
    Then $a_k \in 2^k \mathbb Z$ for all $k$ even.
    Furthermore, if $n$ is even then $a_k \in 2^k \mathbb Z$ for all $k$.
\end{lemma}

Given a matrix $M$ of order $n$ and a subset $S \subseteq \{1,\dots,n\}$, we denote by $M[S]$, the principal submatrix of $M$ obtained by deleting the $i$th row and column of $M$ for each $i \in S$.
The $(i,j)$-entry of $M$ is denoted by $M_{i,j}$.

We will make use of the matrix determinant lemma (see, e.g., \cite{matdetHarville}), which we now state.

\begin{lemma}
    \label{lem:matdetl}
    Let $M$ be a square matrix and let $\mathbf u, \mathbf v$ be column vectors of the appropriate size.
    Then
    \[
    \det(M + \mathbf u \mathbf v^\top) = \det(M) + \mathbf v^\top \operatorname{adj}(M)\mathbf v.
    \]
\end{lemma}

Let $H \in \mathcal H_n(2)$.
We define $\Delta(H)$ as the set of indices $i$ such that $H_{i,i} = -1$.
Observe that we can write $H = J-2A-2\operatorname{diag}(\chi_{\Delta(H)})$, where $A$ is a graph-adjacency matrix and $\chi_{\Delta(H)}$ is the indicator vector for $\Delta(H)$.

\begin{lemma}
\label{lem:matdet1}
    Let $H \in \mathcal H_n(2)$ and write $H = J-2A-2\operatorname{diag}(\chi_{\Delta(H)})$.
    Then
    \[
    \operatorname{Char}_{H}(x) = \sum_{S \subseteq \Delta(H)} 2^{|S|}\operatorname{Char}_{J-2A[S]}(x).
    \]
\end{lemma}
\begin{proof}
Induction on the cardinality of $\Delta(H)$.
When $\Delta(H) = \emptyset$ the statement is obvious.
Otherwise, apply Lemma~\ref{lem:matdetl} with $M = xI-J+2A+2\operatorname{diag}(\chi_{\Delta(H)\backslash \{i\}})$, $\mathbf u = \chi_{\{i\}}$, and $\mathbf v = 2\mathbf u$ to obtain
\begin{align}
\label{eqn:matdet}
    \operatorname{Char}_{H}(x) &= \operatorname{Char}_{J-2A-2\operatorname{diag}(\chi_{\Delta(H)\backslash \{i\}})}(x)+ 2\operatorname{Char}_{J-2A[i]-2\operatorname{diag}(\chi_{\Delta(H)})[i]}(x).
\end{align}
By induction, we have 
\begin{align}
    \operatorname{Char}_{J-2A-2\operatorname{diag}(\chi_{\Delta(H)\backslash \{i\}})} &= \sum_{S \subseteq \Delta(H)\backslash \{i\}} 2^{|S|}\operatorname{Char}_{J-2A[S]}(x) \nonumber \\
    &= \sum_{\substack{S \subseteq \Delta(H) \\ i \not \in S}}2^{|S|}\operatorname{Char}_{J-2A[S]}(x); \label{eqn:ind1} \\
    2\operatorname{Char}_{J-2A[i]-2\operatorname{diag}(\chi_{\Delta(H)})[i]} &= \sum_{S \subseteq \Delta(H)\backslash \{i\}} 2^{|S|+1}\operatorname{Char}_{J-2A[S\cup \{i\}]}(x) \nonumber \\
    &= \sum_{\substack{S \subseteq \Delta(H) \\ i \in S}} 2^{|S|}\operatorname{Char}_{J-2A[S]}(x). \label{eqn:ind2}
\end{align}
Combine \eqref{eqn:matdet} with \eqref{eqn:ind1} and \eqref{eqn:ind2} to obtain the statement of the lemma.
\end{proof}

\begin{corollary}
    \label{cor:finalConds}
    Let $n \in \mathbb N$, $k \in \{1,\dots,n\}$, $H \in \mathcal H_n(2)$, and write $\operatorname{Char}_H(x)=\sum_{i=0}^n a_ix^{n-i}$.
    Then $a_{k} \in 2^{k-1} \mathbb Z$.
    Furthermore, if $n+k$ is odd then $a_{k} \in 2^{k} \mathbb Z$.
\end{corollary}
\begin{proof}
    By Lemma~\ref{lem:matdet1}, we can write
    \[
    a_k = \sum_{\substack{S \subseteq \Delta(H) \\ |S| \text{ even} \\ |S| \leqslant k}} 2^{|S|} c_{k-|S|}(S) + \sum_{\substack{S \subseteq \Delta(H) \\ |S| \text{ odd} \\ |S| \leqslant k}} 2^{|S|} c_{k-|S|}(S),
    \]
    where, for each $S \subseteq \Delta(H)$, we have $\operatorname{Char}_{J-2A[S]}(x) =\sum_{i=0}^{n-|S|} c_i(S) x^{n-|S|-i}$.
    
    By Lemma~\ref{lem:GGyats}, the coefficient $c_i(S)$ is in $2^i\mathbb Z$ if $i$ is even or is $n-|S|$ is even.
    Hence $2^{|S|}c_{k-|S|}(S)$ is in $2^k\mathbb Z$ if $k-|S|$ is even or $n-|S|$ is even.
    Now it readily follows that $a_k \in 2^k\mathbb Z$ when $n+k$ is odd.
\end{proof}

Now we are ready to prove the main result of this section.

\begin{proof}[Proof of Theorem~\ref{thm:mainBounds} (a) and (b)]
    Let $H \in \mathcal H_n(2)$ and write $\operatorname{Char}_H(x)=\sum_{i=0}^n a_ix^{n-i}$.
    By Remark~\ref{rem:a0a1a2}, there are at most $2^{e-1}$ possible residues for $a_1$ modulo $2^e$ and just one possible residue for $a_0$ and $a_2$ (since $a_2$ is determined by the value of $a_1$).
    By Corollary~\ref{cor:detIdealEasy}, there are at most $2^{e-i+1}$ possible residues for each $a_i$ modulo $2^e$ where $i \in \{3,\dots,e\}$ and just one possible residue for each $a_i$ modulo $2^e$ where $i \in \{e+1,\dots,n\}$.
    Furthermore, by Corollary~\ref{cor:finalConds}, for each $i \in \{3,\dots,e\}$ whose parity is different from that of $n$, there are at most $2^{e-i}$ possible residues for each $a_i$ modulo $2^e$.
    
    Thus, we have at most
    \[
    2^{e-1} \prod_{\substack{i \in \{3,\dots,e\} \\ n + i \in 2\mathbb Z}} 2^{e-i+1} \prod_{\substack{i \in \{3,\dots,e\} \\ n + i \not \in 2\mathbb Z}} 2^{e-i}
    \]
    possible residues for $\operatorname{Char}_H(x)$ modulo $2^e \mathbb Z[x]$, as required.
\end{proof}

Parts (a) and (b) of Theorem~\ref{thm:mainBounds2} were proved in \cite[Corollaries 3.4 and 3.13]{GreavesYatsyna19}.

\section{Underlying graphs, walks, and their weights}
\label{sec:graphs}

In this section, we introduce the underlying graph of a matrix in $\mathcal H_n(q)$, its walks, and their weights.
The main result (Corollary~\ref{cor:mainHSgen}) of this section is about a congruence of traces, which generalises a result of Harary and Schwenk~\cite[Corollary 5a]{hs79} from 1979 (see Theorem~\ref{thm:hs}).

Fix $n, q \in \mathbb N$ with $q \geqslant 2$ and let $H \in \mathcal H_n(q)$ and $A=(J-H)/(1-\zeta)$.
The \textbf{underlying graph} $\Gamma(H)$ of $H$ is defined to have vertex set $\{1,\dots,n\}$ where $i$ is adjacent to $j$ if and only if $A_{i,j} \ne 0$.
It is easy to see that, for all $i$ and $j$, we have 
$A_{i,j} \ne 0$ if and only if $A_{j,i}\ne 0$, 
thus the adjacency of $\Gamma(H)$ is well-defined.
Note that $-1$ is a $q$th root of unity when $q$ is a power of $2$.
Thus the underlying graph $\Gamma(H)$ may have loops when $q$ is a power of $2$, hence it may not be a simple graph, however, all graphs considered herein do not have multiedges.
Furthermore, observe that, when $q=2$ and $H_{i,i} = 1$ for all $i \in \{1,\dots,n\}$, the matrix $A$ is a symmetric $\{0,1\}$-matrix with zero diagonal.
In fact, $A$ is the adjacency matrix for the underlying graph of $H$.
In this sense, one can think of $A$ as a (non-Hermitian) weighted generalisation of an adjacency matrix for $q > 2$.

\begin{remark}
    \label{rem:Aentries}
    Note that $A_{i,j} + A_{j,i} \in (1-\zeta)\mathbb Z[\zeta]$.
    Indeed, suppose $A_{i,j} = (1-\zeta^e)/(1-\zeta)$ for some $e$.
    Then $A_{i,j} = (1-\zeta^{-e})/(1-\zeta)$.
    If $e = 0$ then our claim follows immediately.
    For each nonzero $e$, we have 
    \[
    \frac{1-\zeta^{e}}{1-\zeta} = 
    \begin{cases}
        1+\zeta + \dots + \zeta^{e-1} & \text{ if $e > 0$; }\\
        1+\zeta + \dots + \zeta^{q+e-1} & \text{ if $e < 0$. }
    \end{cases}
    \]
    Thus, using Proposition~\ref{pro:idealprelim} (f), we have $A_{i,j} - e \in (1-\zeta)\mathbb Z[\zeta]$.
    Furthermore, $A_{j,i} +e \in (1-\zeta)\mathbb Z[\zeta]$, whence our claim follows.
\end{remark}

Let $G$ be a group acting on a set $X$.
The orbit $\operatorname{orb}_G(x)$ of an element $x \in X$ is defined as the set $\{x^g \;|\; g \in G \}$.
Let $\Gamma = \Gamma(H)$ and let $\mathbf w$ be a walk of length $N$ in $\Gamma$; we write
$\mathbf w = (w_0,w_1,\dots,w_N)$ where each vertex $w_i$ is adjacent to the vertex $w_{i+1}$ for each $i \in \{0, \dots, N - 1\}$.
The \textbf{weight} $\operatorname{wt}(\mathbf w)$ of $\mathbf w$ is defined as
\[
    \operatorname{wt}(\mathbf w) = \prod_{i=0}^{N-1} A_{w_i,w_{i+1}}.
\]
\begin{remark}
    \label{rem:weightPowers}
    The sum of the $l$th powers of the weights of all $k$-walks from vertex $i$ to vertex $j$ in the graph $\Gamma(H)$ is given by the $(i,j)$-entry of $(A^{\circ l})^k$, where $A^{\circ l}$ denotes the Hadamard (or Schur) product of the matrix $A$ with itself $l$ times.
    This statement can be readily verified by induction on $k$.
\end{remark}

Let $\mathbf w = (w_0,w_1,\dots,w_N)$ be a closed $N$-walk of $\Gamma$, i.e., $w_0 = w_N$.
We say that $\mathbf w$ is \textbf{simple} if each pair of consecutive vertices is distinct, i.e., $w_i \ne w_{i+1}$ for all $i \in \{0,\dots,N-1\}$.
Denote by $W_N(\Gamma)$ the set of closed $N$-walks of $\Gamma$ and denote by $W_N^\prime(\Gamma)$ the set of \emph{simple} closed $N$-walks of $\Gamma$.

For $N \geqslant 3$, there is a natural correspondence between the vertices of the closed walk $\mathbf w$ and the vertices of a regular $N$-gon.
Under this correspondence, we consider the dihedral group $D_N$ of order $2N$ acting on the set of
closed $N$-walks of $\Gamma$. 
Let $N \geqslant 3$ and write $D_N = \langle r, s \; |\; r^N, s^2, (rs)^2 \rangle$.
For notational convenience, we denote by $\mathbf w^k:=(w_k,w_{k+1},\dots,w_{k+N})$ (with indices reduced modulo $N$) the image of $\mathbf w$ under the action of $r^k$ and we denote by $\mathbf w^{\top}:=(w_N,w_{N-1},\dots,w_{0})$, the image of $\mathbf w$ under the action of $s$.
Under the action of $r^i s$, the image of $\mathbf w$ is $(w_{N+i},w_{N+i-1},\dots,w_{i})$.

We denote by $\operatorname{fix}_{\Gamma}(g)$ the set of walks $\mathbf w \in W_N(\Gamma)$ such that $\mathbf w$ is equal to its image under the action of $g$.
For our purposes, the ambient group to which the element $g$ belongs will always be the dihedral group $D_N$.

The walk $\mathbf w$ is called \textbf{palindromic} if it is equal to its reverse walk $\mathbf w^\top$.
Note that if $\mathbf w \in W_N^\prime(\Gamma)$ is palindromic then $N$ must be even.

\begin{remark}
    \label{rem:nonsimple}
    We remark that if $q > 2$ is a power of $2$ and $H \in \mathcal H_n(q)$, then the diagonal entries of $A=(J-H)/(1-\zeta)$ are either equal to $0$ or to $2/(1-\zeta)$.
    By Proposition~\ref{pro:idealprelim} (f), both such entries belong to the ideal $(1-\zeta)\mathbb Z[\zeta]$.
    Therefore, the weight of any non-simple walk of $\Gamma(H)$ is also in $(1-\zeta)\mathbb Z[\zeta]$.
    On the other hand, if $q$ is an odd prime power and $H \in \mathcal H_n(q)$ then $\Gamma(H)$ is a simple graph.
\end{remark}

\begin{lemma}
    \label{lem:kappa}
    Let $\Gamma$ be a graph, $N \geqslant 3$ be an integer, and $\mathbf w \in W_N^\prime(\Gamma)$ be palindromic.
    Suppose there exists $k \in \mathbb N \cup \{0\}$ such that $N \in 2^k\mathbb Z$, and $\mathbf w^{N/2^i} = \mathbf w$ for all $i \in \{0,\dots,k\}$.
    Then $N \in 2^{k+1}\mathbb Z$.
\end{lemma}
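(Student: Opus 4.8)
The plan is to strip the hypothesis down to its essential content and then derive a contradiction with simplicity. Write $M := N/2^k$, which is a positive integer because $N \in 2^k\mathbb Z$; since $N \in 2^{k+1}\mathbb Z$ is equivalent to $M$ being even, it suffices to show $M$ is even. My first step is to observe that, among the conditions indexed by $i \in \{0,\dots,k\}$, all of them follow from the single condition $\mathbf w^{M} = \mathbf w$ together with $2^k \mid N$: indeed $\mathbf w^{M} = \mathbf w$ forces $\mathbf w^{jM} = \mathbf w$ for every $j \geqslant 0$, and each $N/2^i = 2^{k-i}M$ is a multiple of $M$, while the divisibilities $2^i \mid N$ all follow from $2^k \mid N$. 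So I may work with a simple, palindromic, closed $N$-walk $\mathbf w$ satisfying $\mathbf w^{M} = \mathbf w$.

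Next I would convert the two symmetries of $\mathbf w$ into index identities. Extending $\mathbf w$ to an $N$-periodic doubly-infinite sequence $(\bar w_i)_{i\in\mathbb Z}$ with $\bar w_i = w_{i \bmod N}$ (legitimate since $w_0 = w_N$), the relation $\mathbf w^{M} = \mathbf w$ reads $\bar w_{i+M} = \bar w_i$ for all $i$, while palindromicity $\mathbf w^\top = \mathbf w$ reads $w_{N-i} = w_i$, i.e. $\bar w_{-i} = \bar w_i$. Composing these gives $\bar w_i = \bar w_{-i} = \bar w_{M-i}$ for all $i$; that is, $\mathbf w$ is reflection-symmetric about the half-integer position $M/2$.

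The final step is the contradiction. Assuming $M$ is odd, set $j := (M-1)/2 \in \mathbb Z_{\geqslant 0}$; the symmetry then yields $\bar w_j = \bar w_{M-j} = \bar w_{(M+1)/2} = \bar w_{j+1}$. Because $j < M \leqslant N$ we have $j \in \{0,\dots,N-1\}$ and $j+1 \in \{0,\dots,N\}$, so $w_j$ and $w_{j+1}$ are genuinely consecutive vertices of the listed walk $(w_0,\dots,w_N)$, and the equality $w_j = w_{j+1}$ contradicts the simplicity of $\mathbf w$. (The degenerate case $M=1$ is covered as well, since it directly forces $w_0 = w_1$.) Hence $M$ is even, i.e. $N \in 2^{k+1}\mathbb Z$, as claimed.

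I expect the only real care to be the index bookkeeping of the last step: one must ensure that the two equal vertices produced by the composed symmetry are adjacent \emph{in the walk itself} rather than merely congruent modulo $N$, which is exactly why it matters that $j = (M-1)/2$ lies in the range $\{0,\dots,N-1\}$ (equivalently, that $j < M \leqslant N$). The reduction in the first step and the algebra with the reflection are otherwise routine.
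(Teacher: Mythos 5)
Your proof is correct and rests on the same key mechanism as the paper's: composing palindromicity with the shift-invariance yields a reflection symmetry which, were the relevant quantity odd, would force two consecutive vertices of the walk to coincide, contradicting simplicity. The only difference is organisational: the paper runs this argument inductively ($N$ even, then $N/2$ even, and so on), whereas you note that the $i=k$ hypothesis subsumes the others and execute the argument once at level $M=N/2^k$, which is a tidy streamlining but not a different idea.
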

\begin{proof}
    Let $\mathbf w = (w_0,w_1,\dots,w_{N-1},w_N)$.
    Since $\mathbf w$ is palindromic, for each $i \in \{0,\dots,N\}$, we have $w_i = w_{N-i}$.
    If $N$ were odd then, for $i = (N-1)/2$, we would have $w_i = w_{i+1}$, which is impossible since $\mathbf w$ is simple.
    Thus $N$ must be even.
    
    Now suppose that $\mathbf w^{N/2} = \mathbf w$.
    This means that, for each $i \in \{0,\dots,N/2\}$, we have $w_i = w_{N/2+i}$.
    Combining this with the identity $w_i = w_{N-i}$ yields $w_i = w_{N/2-i}$, for each $i \in \{0,\dots,N/2\}$.
    Similar to the above, since $\mathbf w$ is simple, $N/2$ must be even.
    
    The proof is completed by continuing this argument inductively.
\end{proof}

Let $\mathbf w = (w_0,w_1,\dots,w_{N-1},w_N)$ be a palindromic walk.
In view of Lemma~\ref{lem:kappa}, we can define the parameter $\kappa(\mathbf w):= \min \{ k \in \mathbb N \; | \; \mathbf w^{N/2^k} \ne \mathbf w \}$ and set $\psi(\mathbf w) := N/2^{\kappa(\mathbf w)}$.
Define the map $\Psi: W_N^\prime(\Gamma) \to W_N^\prime(\Gamma)$ as
\[
    \Psi(\mathbf w) = 
    \begin{cases}
        \mathbf w^{\psi(\mathbf w)}, & \text{ if $\mathbf w$ is palindromic;} \\
        \mathbf w^{\top}, & \text{ otherwise.}
    \end{cases}
\]
Observe that, for all $\mathbf w \in W_N^\prime(\Gamma)$, we have $\operatorname{wt}(\Psi(\mathbf w)) = \operatorname{wt}(\mathbf w^{\top})$.

\begin{lemma}
    \label{lem:evenPalin}
    Let $\Gamma$ be a graph, $N \geqslant 3$ be an integer, and $\mathbf w \in W_N^\prime(\Gamma)$.
    Then number of palindromic walks in $\operatorname{orb}_{D_N}(\mathbf w)$ is either $0$ or $2$.
\end{lemma}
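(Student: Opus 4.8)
The plan is to study how the dihedral group $D_N = \langle r, s \rangle$ acts on the orbit $\operatorname{orb}_{D_N}(\mathbf{w})$ and count which elements of this orbit are fixed by a reflection, since ``palindromic'' means fixed by the particular reflection $s$. First I would recall that, because $\mathbf{w}$ is simple, its stabiliser in the cyclic subgroup $\langle r \rangle$ is trivial: if $\mathbf{w}^j = \mathbf{w}$ for some $0 < j < N$, then (taking $j$ to be the smallest such, so $j \mid N$) the walk would repeat with period $j$, and applying the argument in the proof of Lemma~\ref{lem:kappa} one sees $w_i = w_{i+1}$ somewhere unless $j = N$ — actually more directly, any nontrivial rotational period forces $N/j \geqslant 2$ and one still must check simplicity is not immediately violated; the cleaner route is to observe that rotations act freely on simple closed walks because a simple closed walk of length $N$ visiting a shorter period would have consecutive equal vertices at the ``wrap''. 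I would phrase this carefully, but the upshot I want is: the rotation subgroup acts freely, so $|\operatorname{orb}_{\langle r\rangle}(\mathbf{w})| = N$, and hence $|\operatorname{orb}_{D_N}(\mathbf{w})|$ is either $N$ or $2N$, with the stabiliser $\operatorname{Stab}_{D_N}(\mathbf{w})$ being either trivial or generated by a single reflection.

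Next I would use the standard structure of $D_N$: its reflections form a single conjugacy class when $N$ is odd and two conjugacy classes when $N$ is even, but in all cases the reflections are exactly the elements $r^k s$ for $k \in \{0, \dots, N-1\}$, and each reflection is an involution. A walk $\mathbf{u} \in \operatorname{orb}_{D_N}(\mathbf{w})$ is palindromic precisely when $s \in \operatorname{Stab}_{D_N}(\mathbf{u})$, i.e. when $\mathbf{u} = \mathbf{u}^\top$. Writing $\mathbf{u} = \mathbf{w}^g$ for $g \in D_N$, we have $\mathbf{u}^\top = \mathbf{u}$ iff $g s g^{-1} \in \operatorname{Stab}_{D_N}(\mathbf{w})$. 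So the count of palindromic walks in the orbit equals the number of cosets $g\operatorname{Stab}_{D_N}(\mathbf{w})$ such that $g^{-1} s g \in \operatorname{Stab}_{D_N}(\mathbf{w})$. I would then split into the two cases. If $\operatorname{Stab}_{D_N}(\mathbf{w})$ is trivial, the orbit has size $2N$ and the palindromic elements correspond to $g$ with $g^{-1} s g = 1$, impossible since $s \neq 1$; wait — that would give $0$, which is allowed. If instead $\operatorname{Stab}_{D_N}(\mathbf{w}) = \langle t \rangle$ for a reflection $t$, the orbit has size $N$, and the palindromic elements correspond to cosets $g\langle t\rangle$ with $g^{-1} s g \in \langle t\rangle$, i.e. $g^{-1} s g = t$ (it cannot be $1$); the set of such $g$ is a single coset of the centraliser $C_{D_N}(s)$, and since $s$ is an involution in $D_N$ its centraliser has order $2$ (for $N$ odd) or $4$ (for $N$ even), giving $|C_{D_N}(s)|/|\langle t\rangle| = 1$ or $2$ cosets $g\langle t \rangle$ — so $1$ or $2$ palindromic walks. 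To pin it down to exactly $0$ or $2$ I would argue that $t$ and $s$ must be conjugate (they are, since $\mathbf{w}$ lies in the same orbit as a palindromic walk iff such a $g$ exists), and then a direct count of $\{g : g^{-1}sg = t\}$ modulo $\langle t \rangle$ gives exactly $2$ when $N$ is even and the relevant reflections pair up, and we fold the ``$1$'' case into ``$2$'' by noting the palindromic walk $\mathbf{u}$ and its rotated mate $\mathbf{u}^{N/2}$ are both palindromic and distinct by simplicity (this is exactly the content of Lemma~\ref{lem:kappa} with $k = 0$, ensuring $\mathbf{u}^{N/2} \neq \mathbf{u}$ when $N/2$ is forced... here I must be careful: Lemma~\ref{lem:kappa} shows that if $\mathbf{u}^{N/2} = \mathbf{u}$ then $N$ is divisible by a higher power of $2$, and iterating, $\psi(\mathbf{u})$-rotation moves $\mathbf{u}$, giving a second palindromic walk).

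Concretely, the clean argument I would write: suppose the orbit contains at least one palindromic walk; WLOG $\mathbf{w}$ itself is palindromic (replace $\mathbf{w}$ by that representative). Then $\mathbf{w}^{\psi(\mathbf{w})}$ is a second walk in the orbit, it is palindromic (one checks $w_i = w_{N-i}$ together with periodicity under $r^{\psi(\mathbf{w})}$... actually under $r^{N/2^{\kappa-1}}$-type shifts) — and it differs from $\mathbf{w}$ by the very definition of $\kappa(\mathbf{w})$ and $\psi(\mathbf{w})$. Conversely, if $\mathbf{w}^a$ and $\mathbf{w}^b$ (or reverses) are both palindromic with $0 \leqslant a < b < N$, I would show $b - a = N/2$: palindromicity of $\mathbf{w}^a$ gives $w_{a+i} = w_{a - i}$ and of $\mathbf{w}^b$ gives $w_{b+i} = w_{b-i}$ for all $i$; composing these two ``reflections'' of the index set $\mathbb{Z}/N$ yields a translation by $2(b-a)$ fixing $\mathbf{w}$, and since rotations act freely, $2(b-a) \equiv 0 \pmod N$, forcing $b - a = N/2$. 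Hence there are at most two palindromic walks, and combined with the existence argument, exactly $0$ or $2$. The main obstacle I anticipate is the bookkeeping around reversed versus rotated walks — i.e. making sure that ``palindromic walks in the orbit'' is counted as a set of walks and not of group elements, and correctly handling the case $N$ even where $\mathbf{w}^\top$ might coincide with some $\mathbf{w}^k$; the free action of rotations on simple closed walks is the key lemma that tames all of this, so establishing that cleanly up front is the crux.
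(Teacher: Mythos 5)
The central claim on which your argument rests --- that rotations act freely on simple closed walks, so that the stabiliser of $\mathbf w$ in $D_N$ is either trivial or generated by a single reflection --- is false. In this paper ``simple'' only requires \emph{consecutive} vertices to be distinct ($w_i \ne w_{i+1}$), not all vertices, so a simple closed walk can have a nontrivial rotational period: for example $\mathbf w = (1,2,3,2,1,2,3,2,1)$ is a simple closed $8$-walk with $\mathbf w^4 = \mathbf w$, whose $D_8$-orbit has size $4$ (not $8$ or $16$) and whose stabiliser $\{1, r^4, s, r^4s\}$ contains a nontrivial rotation. There is no ``consecutive equal vertices at the wrap'': a period-$j$ walk only needs $w_{j-1}\ne w_j$, which is easily satisfied. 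This breaks both places where you invoke freeness. First, the dichotomy for the stabiliser, and hence the coset count of palindromic elements, is incomplete. Second, in your ``clean argument'', from two palindromic rotations $\mathbf w^a$ and $\mathbf w^b$ you compose the reflections to get a rotation by $2(b-a)$ fixing $\mathbf w$ and conclude $2(b-a)\equiv 0 \pmod N$, forcing $b-a=N/2$. In the example above $a=0$ and $b=2$ both give palindromic rotations while $2(b-a)=4\not\equiv 0 \pmod 8$: the composed rotation $r^4$ genuinely fixes $\mathbf w$. Read literally, your argument would rule out a configuration that actually occurs; the lemma still holds there only because $\mathbf w^0=\mathbf w^4$ and $\mathbf w^2=\mathbf w^6$ coincide \emph{as walks}, leaving exactly two distinct palindromic walks. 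This is exactly the walks-versus-indices bookkeeping you flag at the end, and the tool you propose to tame it is the false claim.

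The paper's proof handles the periodicity instead of excluding it: after reducing to the case that $\mathbf w$ itself is palindromic, it uses $\kappa(\mathbf w)$ (whose well-definedness is Lemma~\ref{lem:kappa}) to write $\mathbf w$ as the concatenation of $2^{\kappa-1}$ copies of $\mathbf x_\kappa \mathbf x_\kappa^\top$ with $\mathbf x_\kappa \ne \mathbf x_\kappa^\top$, exhibits $\mathbf w$ and $\mathbf w^{\psi(\mathbf w)}$ as two distinct palindromic walks, and then verifies directly that no other rotate of $\mathbf x_\kappa\mathbf x_\kappa^\top$ is palindromic by locating an index where $\mathbf x_\kappa$ and $\mathbf x_\kappa^\top$ differ. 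Your existence half (that $\mathbf w^{\psi(\mathbf w)}$ is a second, distinct palindromic walk) is sound and agrees with the paper, but the uniqueness half must be carried out relative to the minimal rotational period of $\mathbf w$ rather than relative to $N$.
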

\begin{proof}
    If $\mathbf w^k \neq (\mathbf w^k)^\top $ for all natural numbers $k$, then $ \operatorname{orb}_{D_N}(\mathbf w)$ does not contain any palindromic walk.
    Otherwise, for some $k$, we have $\mathbf w^k$ is a palindromic walk.
    Since $\mathbf w$ is simple and palindromic, we know that $N$ is even.
    Note that $ \operatorname{orb}_{D_N}(\mathbf w^k)= \operatorname{orb}_{D_N}(\mathbf w)$.
    Without loss of generality, we can assume $\mathbf w$ is palindromic and write 
    \begin{align*}
    \mathbf w
    &=(w_0,w_{1},\dots ,w_{\frac{N}{2}-1},w_{\frac{N}{2}}, w_{\frac{N}{2}-1},\dots ,w_{1},w_0).
    \end{align*}
    For $i \in \mathbb N \cup \{ 0 \}$ such that $2^i$ divides $N$, define $\mathbf x_i :=(w_0,w_{1},\dots ,w_{N/2^i-1},w_{N/2^i})$.
    Define the concatenation of $\mathbf x_i \mathbf x_i^\top$ as follows
    $$\mathbf x_i \mathbf x_i^\top := (w_0,w_{1},\dots ,w_{N/2^i-1},w_{N/2^i}, w_{N/2^i-1},\dots ,w_{1},w_0).$$
    Set $\kappa = \kappa(\mathbf w)$.
    Then
        $$\mathbf w=\underbrace{\mathbf x_\kappa \mathbf x_\kappa^\top \dots \mathbf x_\kappa \mathbf x_\kappa^\top}_{\substack{2^{\kappa-1}}} \quad \text{ and } \quad \mathbf w^{\psi(\mathbf w)}=\underbrace{\mathbf x_\kappa^\top \mathbf x_\kappa \dots \mathbf x_\kappa^\top \mathbf x_\kappa}_{\substack{2^{\kappa-1}}}$$
    are two distinct palindromic walks in $\operatorname{orb}_{D_N} (\mathbf w)$.

    It remains to show that there is no other $m \in \{1,\dots,N\}$ such that $\mathbf w^m$ is palindromic.
    It suffices to show that there is no $m \in \{1,\dots,N/2^\kappa\}$ such that $(\mathbf x_\kappa \mathbf x_\kappa^\top )^m = ((\mathbf x_\kappa \mathbf x_\kappa^\top )^m)^\top$.
    Since $\mathbf x_\kappa \neq \mathbf x_\kappa^\top$, there exists an integer $l \in \{0,\dots,N/2^\kappa \}$ such that the $l$th position of $\mathbf x_\kappa$ and $\mathbf x_\kappa^\top$ differ, i.e., $w_l \neq w_{N/2^\kappa - l}$.
    It follows that, for each $m \in \{1,\dots,N/2^\kappa\}$, the $(l+m)$-th position of  $(\mathbf x_\kappa \mathbf x_\kappa^\top )^m$ and $((\mathbf x_\kappa \mathbf x_\kappa^\top )^m)^\top$ also differ, whence
    $$(\mathbf x_\kappa \mathbf x_\kappa^\top )^m \neq ((\mathbf x_\kappa \mathbf x_\kappa^\top )^m)^\top.$$
    Thus, $\mathbf w$ and $\mathbf w^{\psi (\mathbf w)}$ are the only two palindromic walks in $\operatorname{orb}_{D_N} (\mathbf w)$.
\end{proof}

Using Lemma~\ref{lem:evenPalin}, we can partition the set of orbits of a simple walk in a graph into two equal-sized parts in a natural way.
We use $\sqcup$ to denote the disjoint union operator.

\begin{lemma}
    \label{lem:Upart}
    Let $\Gamma$ be a graph, $N \geqslant 3$ be an integer, and $\mathbf w \in W_N^\prime(\Gamma)$.
    Then there exists $U \subset \operatorname{orb}_{D_N}(\mathbf w)$ such that
    \[
        \operatorname{orb}_{D_N}(\mathbf w) = U \; \sqcup \;  \Psi(U).
    \]
\end{lemma}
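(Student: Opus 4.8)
The plan is to build $U$ as a set of orbit representatives for the action of the order‑$2$ subgroup $\langle \Psi \rangle$ on $\operatorname{orb}_{D_N}(\mathbf w)$, once we have checked that $\Psi$ restricts to a well‑defined fixed‑point‑free involution on this orbit. First I would verify that $\Psi$ maps $\operatorname{orb}_{D_N}(\mathbf w)$ into itself: this is clear since $\mathbf w^\top$ and $\mathbf w^{\psi(\mathbf w)}$ are both obtained from $\mathbf w$ by applying an element of $D_N$, and the definition of $\Psi$ depends only on whether the input is palindromic, a $D_N$‑invariant notion in the weak sense that it is detected by Lemma~\ref{lem:evenPalin}. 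Next I would show $\Psi^2$ is the identity on $\operatorname{orb}_{D_N}(\mathbf w)$: in the non‑palindromic case $\Psi$ is just the reversal $s$, so $\Psi^2 = \operatorname{id}$ trivially; in the palindromic case one must check that $\mathbf w^{\psi(\mathbf w)}$ is again palindromic with the \emph{same} value of $\psi$, and that applying the shift by $\psi(\mathbf w)$ twice returns $\mathbf w$ — this follows from the block decomposition $\mathbf w = (\mathbf x_\kappa \mathbf x_\kappa^\top)^{2^{\kappa-1}}$ established in the proof of Lemma~\ref{lem:evenPalin}, since shifting by $\psi(\mathbf w) = N/2^\kappa = |\mathbf x_\kappa|$ cyclically swaps $\mathbf x_\kappa \leftrightarrow \mathbf x_\kappa^\top$ in each block.

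Then I would argue $\Psi$ has no fixed points on the orbit. In the non‑palindromic case, $\Psi(\mathbf w) = \mathbf w^\top \ne \mathbf w$ by definition of ``not palindromic''. In the palindromic case, $\Psi(\mathbf w) = \mathbf w^{\psi(\mathbf w)} \ne \mathbf w$ precisely by the definition of $\kappa(\mathbf w)$ via Lemma~\ref{lem:kappa}: $\psi(\mathbf w) = N/2^{\kappa(\mathbf w)}$ and $\kappa(\mathbf w)$ is chosen so that $\mathbf w^{N/2^{\kappa(\mathbf w)}} \ne \mathbf w$. Since $\Psi$ is a fixed‑point‑free involution, its orbits on $\operatorname{orb}_{D_N}(\mathbf w)$ all have size $2$, so choosing one element from each such orbit yields a set $U$ with $\operatorname{orb}_{D_N}(\mathbf w) = U \sqcup \Psi(U)$, as required.

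One subtlety I want to handle carefully: $\Psi$ as literally written is defined on all of $W_N^\prime(\Gamma)$, but the quantities $\kappa(\mathbf w)$, $\psi(\mathbf w)$ are only defined for palindromic walks, and a priori a walk $\mathbf v$ in $\operatorname{orb}_{D_N}(\mathbf w)$ may be palindromic while some $D_N$‑translate of it is not. So the involution property $\Psi^2 = \operatorname{id}$ must be checked for mixed orbits too: if $\mathbf v$ is palindromic then $\Psi(\mathbf v) = \mathbf v^{\psi(\mathbf v)}$ is palindromic (same block structure), and $\Psi$ sends it back; if $\mathbf v$ is not palindromic then $\Psi(\mathbf v) = \mathbf v^\top$, which is also not palindromic (reversal of a non‑palindrome is a non‑palindrome), so $\Psi(\mathbf v^\top) = (\mathbf v^\top)^\top = \mathbf v$. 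Thus the orbit splits cleanly into its palindromic part (on which $\Psi$ acts as the order‑$2$ shift, with exactly two elements by Lemma~\ref{lem:evenPalin}, matched to each other) and its non‑palindromic part (on which $\Psi$ acts as reversal, pairing $\mathbf v$ with $\mathbf v^\top \ne \mathbf v$).

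The main obstacle I anticipate is the bookkeeping in the palindromic case — specifically, confirming that $\psi$ is constant along the $\Psi$‑orbit $\{\mathbf w, \mathbf w^{\psi(\mathbf w)}\}$ and that the shift‑by‑$\psi$ map is genuinely an involution rather than an order‑$2^{\kappa-1}$‑related map. This is exactly where the explicit concatenation identities from the proof of Lemma~\ref{lem:evenPalin} do the work: writing $\mathbf w$ and $\mathbf w^{\psi(\mathbf w)}$ as the two alternating concatenations of $\mathbf x_\kappa$ and $\mathbf x_\kappa^\top$ makes it visually clear that shifting by one block length toggles between them. Everything else is a standard application of the orbit‑counting principle for a free $\mathbb Z/2$‑action. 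I would keep the write‑up short, citing Lemma~\ref{lem:kappa} for well‑definedness of $\kappa$ and Lemma~\ref{lem:evenPalin} for the $0$‑or‑$2$ dichotomy, and spending most of the space on the fixed‑point‑free‑involution verification.
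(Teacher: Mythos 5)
Your proof is correct. The key verifications all go through: $\Psi$ maps $\operatorname{orb}_{D_N}(\mathbf w)$ into itself; in the palindromic case $\psi$ is constant under cyclic shifts (because the periodicity conditions defining $\kappa$ are shift-invariant), $\mathbf w^{\psi(\mathbf w)}$ is again palindromic by the block decomposition from Lemma~\ref{lem:evenPalin}, and $\mathbf w^{2\psi(\mathbf w)} = \mathbf w^{N/2^{\kappa-1}} = \mathbf w$ by minimality of $\kappa$; and fixed-point-freeness is immediate from the definitions. So $\Psi$ is a fixed-point-free involution on the orbit and any transversal of its pairs gives the required $U$. The paper's route is different in one respect that matters downstream: rather than taking an arbitrary transversal, it exhibits a concrete $U$, namely the full rotation orbit $\operatorname{orb}_{\langle r\rangle}(\mathbf w)$ when the dihedral orbit contains no palindromic walk, and the initial segment $\{\mathbf w^i \mid 0 \leqslant i < \psi(\mathbf w)\}$ of rotations of a palindromic representative otherwise. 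That explicit choice consists entirely of rotations of a single walk, hence all its elements have the same weight --- exactly the property invoked in Remark~\ref{rem:sameWeight} and used in the proof of Lemma~\ref{lem:weightedBurnside}. An arbitrary transversal of the $\Psi$-pairing, as in your construction, may mix a walk from one pair with the reverse of a walk from another pair and so need not have constant weight. Your argument therefore proves the lemma exactly as stated, and is arguably cleaner as a standalone statement about a free $\mathbb Z/2\mathbb Z$-action, but if the same $U$ is meant to feed into Remark~\ref{rem:sameWeight} you should either adopt the paper's concrete transversal or record the constant-weight property as part of what you construct.
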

\begin{proof}
By Lemma~\ref{lem:evenPalin}, the number of palindromic walks in $\operatorname{orb}_{D_N}(\mathbf w)$ is either $0$ or $2$.
If it is $0$ then set $U = \operatorname{orb}_{\langle r \rangle}(\mathbf w)$; clearly $U$ and $\Psi(U)$ partition $\operatorname{orb}_{D_N}(\mathbf w)$.
Otherwise, if it is $2$ then, without loss of generality, assume that $\mathbf w$ is palindromic and set $U = \{\mathbf w^i \; | \; i \in \{0,\dots,\psi(\mathbf w)-1\} \}$.
    Using the proof of Lemma~\ref{lem:evenPalin}, it follows that $U$ and $\Psi(U)$ partition $\operatorname{orb}_{D_N}(\mathbf w)$.
\end{proof}

\begin{remark}
    \label{rem:sameWeight}
    Let $H \in \mathcal H_n(q)$, $\Gamma=\Gamma(H)$, $N \geqslant 3$ be an integer, $\mathbf w \in W_N^\prime(\Gamma)$, and let $U$ be a subset from Lemma~\ref{lem:Upart} such that $ \operatorname{orb}_{D_N}(\mathbf w) = U \; \sqcup \;  \Psi(U)$.
    Observe that each walk $\mathbf w \in U$ has the same weight.
    It is important to note, however, that it is not necessary for $\mathbf w$ and $\mathbf w^\top$ to have the same weight.
\end{remark}

Using Lemma~\ref{lem:Upart}, we can also partition the set $W_k^\prime(\Gamma)$ of closed $k$-walks of a graph $\Gamma$ into two equal-sized parts in the same way.

\begin{corollary}
        \label{cor:UpartWalk}
    Let $\Gamma$ be a graph, $k \geqslant 1$ be an integer, and $\mathbf w \in W_k^\prime(\Gamma)$.
    There exists $U \subset W_k^\prime(\Gamma)$ such that
    \[
        W_k^\prime(\Gamma) = U \; \sqcup \;  \Psi(U).
    \]
\end{corollary}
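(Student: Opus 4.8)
The plan is to reduce the statement to Lemma~\ref{lem:Upart} by gluing together its orbit-by-orbit decomposition, after disposing of the degenerate values $k=1,2$ by hand (Lemma~\ref{lem:Upart} requires $N \geqslant 3$, and the dihedral action is only set up for $N \geqslant 3$).

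First I would handle the small cases. If $k=1$, a closed $1$-walk has the form $(v,v)$ and is never simple, so $W_1'(\Gamma) = \emptyset$ and $U = \emptyset$ works. If $k=2$, every $\mathbf w \in W_2'(\Gamma)$ has the shape $(a,b,a)$ with $a$ adjacent to $b$ and $a \ne b$; such a walk is palindromic, and a direct computation gives $\kappa(\mathbf w) = 1$ and $\psi(\mathbf w) = 1$, hence $\Psi(\mathbf w) = \mathbf w^1 = (b,a,b) \ne \mathbf w$. Thus $\Psi$ restricts to a fixed-point-free involution of $W_2'(\Gamma)$, and we may take $U$ to contain exactly one walk from each pair $\{(a,b,a),(b,a,b)\}$.

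Now assume $k \geqslant 3$. The observation that makes the gluing work is that $\Psi$ maps each $D_k$-orbit into itself: by definition $\Psi(\mathbf w)$ equals either $\mathbf w^{\psi(\mathbf w)}$, the image of $\mathbf w$ under $r^{\psi(\mathbf w)}$, or $\mathbf w^{\top}$, its image under $s$, and in either case $\Psi(\mathbf w) \in \operatorname{orb}_{D_k}(\mathbf w)$. Since $W_k'(\Gamma)$ is the disjoint union of the $D_k$-orbits it contains, I would, for each such orbit $O$, fix a representative and apply Lemma~\ref{lem:Upart} with $N = k$ to obtain $U_O \subseteq O$ with $O = U_O \sqcup \Psi(U_O)$, and then set $U := \bigsqcup_O U_O$, the union ranging over all $D_k$-orbits in $W_k'(\Gamma)$.

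It then remains to check $W_k'(\Gamma) = U \sqcup \Psi(U)$. Because $\Psi$ preserves orbits we have $\Psi(U_O) \subseteq O$, so the sets $\Psi(U_O)$ for distinct orbits are pairwise disjoint and $\Psi(U) = \bigsqcup_O \Psi(U_O)$; consequently $U \cup \Psi(U) = \bigsqcup_O (U_O \cup \Psi(U_O)) = \bigsqcup_O O = W_k'(\Gamma)$, while $U \cap \Psi(U) = \emptyset$ since the only possible overlaps, $U_O \cap \Psi(U_O)$ within a fixed orbit, vanish by Lemma~\ref{lem:Upart}. I do not anticipate a genuine obstacle here; the only points needing care are verifying that the generic map $\Psi$ really acts within a single $D_k$-orbit (so that the per-orbit decompositions assemble into a global one) and the ad hoc treatment of $k \in \{1,2\}$, where Lemma~\ref{lem:Upart} does not directly apply.
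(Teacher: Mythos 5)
Your proposal is correct and follows essentially the same route as the paper: dispose of $k=1$ (empty set) and $k=2$ (pair $(a,b,a)$ with $(b,a,b)$) by hand, then invoke Lemma~\ref{lem:Upart} for $k \geqslant 3$, with your explicit orbit-by-orbit gluing merely making precise what the paper leaves implicit. The only difference is cosmetic: you verify that the $k=2$ pairing agrees with the formal definition of $\Psi$, whereas the paper states the pairing directly.
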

\begin{proof}
     The case when $k=1$ is vacuous.
     When $k=2$, each walk $(v,w,v)$ can be paired up with $(w,v,w)$ and each non-palindromic walk $(u,v,w)$ can be paired up with $(w,v,u)$.
     For $k \geqslant 3$, the corollary follows from Lemma~\ref{lem:Upart}.
\end{proof}

We will also need the following result about the weights of (not necessarily simple) walks in an orbit. 

\begin{lemma}
    \label{lem:UpartNonsimple}
     Let $H \in \mathcal H_n(q)$, $\Gamma=\Gamma(H)$, $N \geqslant 3$ be an integer, and $\mathbf w \in W_N(\Gamma)$.
    Then either
    \begin{itemize}
        \item $\operatorname{wt}(\mathbf x) = \operatorname{wt}(\mathbf w)$ for all $\mathbf x \in \operatorname{orb}_{D_N}(\mathbf w)$; or
        \item there exists $U \subset \operatorname{orb}_{D_N}(\mathbf w)$ such that $ \operatorname{orb}_{D_N}(\mathbf w) = U \; \sqcup \;  \{ \mathbf x^\top \; | \; \mathbf x \in U\}$
    and $\operatorname{wt}(\mathbf x) = \operatorname{wt}(\mathbf w)$ for all $\mathbf x \in U$.
    \end{itemize}
\end{lemma}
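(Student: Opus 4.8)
The plan is to reduce the statement to one elementary observation: cyclic rotations of a \emph{closed} walk all have the same weight. Indeed, if $\mathbf w=(w_0,\dots,w_N)$ is closed then $w_N=w_0$, so for any $k$ the multiset of consecutive ordered pairs $\{(w_{k+i},w_{k+i+1}) : 0\leqslant i<N\}$ (indices mod $N$) coincides with $\{(w_i,w_{i+1}) : 0\leqslant i<N\}$; hence $\operatorname{wt}(\mathbf w^k)=\prod_i A_{w_{k+i},w_{k+i+1}}=\prod_i A_{w_i,w_{i+1}}=\operatorname{wt}(\mathbf w)$. Consequently every walk in the cyclic orbit $\operatorname{orb}_{\langle r\rangle}(\mathbf w)$ has weight $\operatorname{wt}(\mathbf w)$, and likewise every walk in $\operatorname{orb}_{\langle r\rangle}(\mathbf w^\top)$ has weight $\operatorname{wt}(\mathbf w^\top)$. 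This argument uses nothing about simplicity, so the possible presence of loops causes no difficulty.

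Next I would record the interaction between rotation and reversal, namely $(\mathbf w^k)^\top=(\mathbf w^\top)^{N-k}$ for all $k$, which is a direct check of indices using $w_N=w_0$. Together with the decomposition $D_N=\langle r\rangle\cup\langle r\rangle s$, this yields $\operatorname{orb}_{D_N}(\mathbf w)=\operatorname{orb}_{\langle r\rangle}(\mathbf w)\cup\operatorname{orb}_{\langle r\rangle}(\mathbf w^\top)$ and moreover $\{\mathbf x^\top : \mathbf x\in\operatorname{orb}_{\langle r\rangle}(\mathbf w)\}=\operatorname{orb}_{\langle r\rangle}(\mathbf w^\top)$.

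The dichotomy is then immediate. If $\mathbf w^\top\in\operatorname{orb}_{\langle r\rangle}(\mathbf w)$, the two cyclic orbits coincide, so $\operatorname{orb}_{D_N}(\mathbf w)=\operatorname{orb}_{\langle r\rangle}(\mathbf w)$ and by the first paragraph every element of the $D_N$-orbit has weight $\operatorname{wt}(\mathbf w)$; this is the first alternative. Otherwise, since the orbits of the $\langle r\rangle$-action are either equal or disjoint, $\operatorname{orb}_{\langle r\rangle}(\mathbf w)$ and $\operatorname{orb}_{\langle r\rangle}(\mathbf w^\top)$ are disjoint, and setting $U=\operatorname{orb}_{\langle r\rangle}(\mathbf w)$ gives $\operatorname{orb}_{D_N}(\mathbf w)=U\sqcup\{\mathbf x^\top : \mathbf x\in U\}$ with $\operatorname{wt}(\mathbf x)=\operatorname{wt}(\mathbf w)$ for all $\mathbf x\in U$; this is the second alternative.

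There is essentially no hard step here: unlike the simple case (Lemmas~\ref{lem:evenPalin}--\ref{lem:Upart}), we need not single out the palindromic walks in the orbit, because whenever reversal fails to carve off a disjoint second half we are allowed to retreat to the stronger ``all weights equal'' conclusion. The only points requiring a little care are the bookkeeping identity $(\mathbf w^k)^\top=(\mathbf w^\top)^{N-k}$ and the appeal to the standard fact that the orbits of the cyclic subgroup partition the $D_N$-orbit; both are routine.
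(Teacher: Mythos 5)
Your proof is correct and follows essentially the same route as the paper's: both rest on the facts that cyclic rotations of a closed walk preserve its weight and that reversal interchanges the two cyclic suborbits of the dihedral orbit, with $U$ taken to be one cyclic suborbit. The only cosmetic difference is that you split cases on whether $\mathbf w^\top$ lies in $\operatorname{orb}_{\langle r\rangle}(\mathbf w)$, whereas the paper splits on whether some walk in the orbit has a weight different from that of its reverse; if anything, your version makes the implicit appeal to rotation-invariance of weights more explicit.
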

\begin{proof}
Suppose $\mathbf x \in \operatorname{orb}_{D_N}(\mathbf w)$ such that $\operatorname{wt}(\mathbf x) \ne \operatorname{wt}(\mathbf x^\top)$.
Without loss of generality, we can assume that $\operatorname{wt}(\mathbf x) = \operatorname{wt}(\mathbf w)$.
Let $U = \{ \mathbf y \in \operatorname{orb}_{D_N}(\mathbf w) \; | \; \mathbf y^i = \mathbf x \text{ for some } i \in \{0,\dots, N-1\} \}$.
Clearly, $\operatorname{wt}(\mathbf y) = \operatorname{wt}(\mathbf w)$ for each $\mathbf y \in U$ and $\operatorname{orb}_{D_N}(\mathbf w) \backslash U =  \{ \mathbf y^\top \; | \; \mathbf y \in U\}$, as required.
\end{proof}

In the subsequent results in this section, we will occasionally take advantage of the following properties of weights of walks when $q$ is a power of $2$, which we record as a remark.

\begin{remark}
    \label{rem:weightsum}
    Let $n,k \in \mathbb N$, $q \geqslant 2$ be a power of $2$, $H \in \mathcal H_n(q)$, $\Gamma =\Gamma(H)$, and $\mathbf w \in W_k(\Gamma)$.
    We claim that $\operatorname{wt}(\mathbf w ) + \operatorname{wt}(\Psi(\mathbf w) ) \in (1-\zeta)\mathbb Z[\zeta]$.
    Indeed, write $\mathbf w = (w_0,w_1,\dots,w_k)$ and $\operatorname{wt}(\mathbf w) = \prod_{i=1}^{k} \frac{1-\zeta^{e_i}}{1-\zeta}$ for some nonzero integers $e_i$.
    Then $\operatorname{wt}(\Psi(\mathbf w) ) = \prod_{i=1}^{k} \frac{1-\zeta^{q-e_i}}{1-\zeta}$.
    Using Remark~\ref{rem:Aentries} and Proposition~\ref{pro:idealprelim} (f), we have 
    \[
    \operatorname{wt}(\mathbf w) - \prod_{i=1}^{k} e_i \in (1-\zeta)\mathbb Z[\zeta] \text{ and } \operatorname{wt}(\Psi(\mathbf w)) - \prod_{i=1}^{k} (-e_i) \in (1-\zeta)\mathbb Z[\zeta].
    \]
    Our claim now readily follows from Proposition~\ref{pro:idealprelim} (g).
\end{remark}

Next, we apply the orbit-stabiliser theorem to obtain a key result about a certain sum of weights of walks.

\begin{lemma}
\label{lem:weightedBurnside}
    Let $n \in \mathbb N$, $q > 2$ be a power of $2$, $N \geqslant 3$ be an integer, $H \in \mathcal H_n(q)$, and $\Gamma=\Gamma(H)$.
    Then
$$\sum_{g \in D_N} \sum_{\mathbf w \in \operatorname{fix}_{\Gamma}(g)} \operatorname{wt}(\mathbf w) \in (1-\zeta)N\mathbb Z[\zeta].$$
\end{lemma}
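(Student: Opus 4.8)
The plan is to compute the double sum by splitting over the conjugacy-class structure of $D_N$ and, for each group element $g$, pairing up the fixed walks so that weights come in pairs summing to something in $(1-\zeta)\mathbb Z[\zeta]$. First I would isolate the two group elements that behave specially: the identity $1$ and, when $N$ is even, the half-rotation $r^{N/2}$. For $g = 1$ we have $\operatorname{fix}_\Gamma(1) = W_N(\Gamma)$, and the contribution is $\sum_{\mathbf w} \operatorname{wt}(\mathbf w) = \operatorname{tr}(A^N)$ (up to the usual closed-walk bookkeeping). I would argue that $\operatorname{tr}(A^N) \in (1-\zeta)\mathbb Z[\zeta]$ directly: since $A = (J-H)/(1-\zeta)$ and the relevant trace identities, this is essentially the content underlying the Harary--Schwenk-type congruence; more carefully, $A^N$ has entries in $\mathbb Z[\zeta]$ and one shows the trace lies in the ideal by a separate (already-available or elementary) argument. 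The key structural point for all the \emph{other} elements is Lemma~\ref{lem:UpartNonsimple} (and Remark~\ref{rem:weightsum}): for any $\mathbf w \in W_N(\Gamma)$, the orbit $\operatorname{orb}_{D_N}(\mathbf w)$ either has all weights equal, or splits as $U \sqcup \{\mathbf x^\top : \mathbf x \in U\}$ with constant weight on $U$; in the latter case $\operatorname{wt}(\mathbf x) + \operatorname{wt}(\mathbf x^\top) \in (1-\zeta)\mathbb Z[\zeta]$ by Remark~\ref{rem:weightsum}.

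Next I would handle a fixed non-identity $g \in D_N$. The set $\operatorname{fix}_\Gamma(g)$ is a union of orbits? — no, it is not orbit-closed, so instead I would use the conjugation action: if $h \in D_N$ then $h$ induces a bijection $\operatorname{fix}_\Gamma(g) \to \operatorname{fix}_\Gamma(h^{-1}gh)$ that is weight-preserving up to the $\Psi$/transpose issue. Summing over a conjugacy class $C$ of $D_N$, the quantity $\sum_{g \in C}\sum_{\mathbf w \in \operatorname{fix}_\Gamma(g)} \operatorname{wt}(\mathbf w)$ can be reorganised as a sum over pairs $(\mathbf w, g)$ with $g \in C$, $\mathbf w \in \operatorname{fix}_\Gamma(g)$, and this set of pairs is $D_N$-stable under simultaneous conjugation/translation. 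Applying the orbit--stabiliser theorem to this action, each $D_N$-orbit of such pairs has size dividing $2N$; I would show that either the orbit contributes a multiple of $N$ copies of a common weight (hence lands in $N\mathbb Z[\zeta] \subseteq (1-\zeta)N\mathbb Z[\zeta]$ after also using that the weight itself is an algebraic integer, and when needed that it lies in $(1-\zeta)\mathbb Z[\zeta]$), or the orbit pairs a walk with its reverse so that the $\operatorname{wt}(\mathbf w)+\operatorname{wt}(\mathbf w^\top)$ trick applies and, multiplied by the orbit-size factor $N$ (or $N/2$), still lands in $(1-\zeta)N\mathbb Z[\zeta]$. The reflections (when $N$ is even there are two conjugacy classes of reflections, when $N$ odd just one) need the palindromic-walk analysis from Lemmas~\ref{lem:evenPalin} and~\ref{lem:Upart}, but since $\operatorname{wt}(\Psi(\mathbf w)) = \operatorname{wt}(\mathbf w^\top)$ and $\operatorname{wt}(\mathbf w)+\operatorname{wt}(\mathbf w^\top)\in(1-\zeta)\mathbb Z[\zeta]$, those contributions also clear the ideal after multiplication by the orbit length, which is a multiple of $N$ divided by at most $2$.

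I expect the main obstacle to be the bookkeeping around the half-rotation $g = r^{N/2}$ (when $N$ is even) together with the identity contribution: these are exactly the elements whose fixed sets are large and whose orbit sizes are \emph{small} (dividing $N/2$ or $N$ but not forcing a full factor of $N$), so the ``multiply by $N$'' argument does not by itself produce the factor $N$ — one genuinely needs that $\operatorname{tr}(A^N)$ and the analogous ``square-root walk'' sum $\operatorname{tr}((A^{\circ \bullet})^{N/2})$-type quantities lie in $(1-\zeta)\mathbb Z[\zeta]$ on their own. I would prove this by the row-reduction idea of Lemma~\ref{lem:detIdealEasy} adapted to $A$: since every entry of $J - H$ is in $(1-\zeta)\mathbb Z[\zeta]$ (as $1 - \zeta^j \in (1-\zeta)\mathbb Z[\zeta]$ for all $j$), the matrix $A$ has entries in $\mathbb Z[\zeta]$, and a diagonal entry of $A^N$ is a sum of closed-walk weights; grouping these closed walks by the $\Psi$-pairing of Corollary~\ref{cor:UpartWalk} (for simple walks) and Lemma~\ref{lem:UpartNonsimple} (in general) shows the diagonal entry, hence the trace, lies in $(1-\zeta)\mathbb Z[\zeta]$. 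Once the identity and half-rotation contributions are dispatched this way, every remaining conjugacy class contributes something in $N(1-\zeta)\mathbb Z[\zeta]$ by the orbit-size argument, and summing gives the claim. The secondary obstacle is ensuring the $\Psi$-pairing is compatible with the $D_N$-action used in orbit--stabiliser — i.e. that the subset $U$ from Lemma~\ref{lem:UpartNonsimple} can be chosen coherently across an orbit of pairs — which I would handle by noting $\Psi$ is defined canonically (via $\kappa$ and $\psi$) and commutes appropriately with rotations.
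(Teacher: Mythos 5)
Your decomposition by conjugacy classes breaks the mechanism that produces the factor $N$, and this is a genuine gap rather than a bookkeeping issue. The identity element contributes $\sum_{\mathbf w \in W_N(\Gamma)}\operatorname{wt}(\mathbf w)=\operatorname{tr}(A^N)$, and showing that this lies in $(1-\zeta)\mathbb Z[\zeta]$ --- which is true, and is essentially Lemma~\ref{lem:trCong(1-zeta)} --- does not ``dispatch'' it: the target ideal is $(1-\zeta)N\mathbb Z[\zeta]$, which is strictly smaller for every $N\geqslant 3$, and $\operatorname{tr}(A^N)$ does not lie in it in general. (If it did, the $d=N$ term of Corollary~\ref{cor:mainHSgenOdd} could be dropped and the Harary--Schwenk-type congruence would be vacuous; compare the classical necklace congruence $\sum_{d\mid N}\varphi(N/d)a^d\equiv 0 \pmod N$, in which the single term $a^N$ is of course not divisible by $N$.) The same objection defeats your treatment of the other classes: for a pair $(\mathbf w, r^k)$ with $\mathbf w\in\operatorname{fix}_\Gamma(r^k)$, the stabiliser under simultaneous translation/conjugation contains $\langle r^{k}\rangle$, of order $N/\gcd(k,N)$, so the orbit of the pair has size dividing $2\gcd(k,N)$ --- typically far from a multiple of $N$. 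Concretely, the class $\{r^k,r^{-k}\}$ contributes $2\operatorname{tr}((A^{\circ N/d})^d)$ with $d=\gcd(k,N)$ (Lemma~\ref{lem:fixToTrace}(i)), and there is no reason for this to lie in $(1-\zeta)N\mathbb Z[\zeta]$. The divisibility by $N$ only emerges after the contributions of \emph{all} $2N$ group elements are combined.

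The repair is to apply the orbit--stabiliser theorem to the entire double sum at once rather than class by class. Interchanging the order of summation gives $\sum_{g\in D_N}\sum_{\mathbf w\in\operatorname{fix}_\Gamma(g)}\operatorname{wt}(\mathbf w)=\sum_{\mathbf w\in W_N(\Gamma)}|\operatorname{stab}_{D_N}(\mathbf w)|\operatorname{wt}(\mathbf w)$, and since $|\operatorname{stab}_{D_N}(\mathbf w)|=2N/|\operatorname{orb}_{D_N}(\mathbf w)|$, each full $D_N$-orbit $X$ of walks contributes $(2N/|X|)\sum_{\mathbf w\in X}\operatorname{wt}(\mathbf w)$, so the factor $N$ is manifest for every orbit. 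Your remaining ingredients are then exactly what is needed and are the ones the paper uses: if all weights on $X$ agree, the contribution is $2N\operatorname{wt}(\mathbf w)\in(1-\zeta)N\mathbb Z[\zeta]$ since $2\in(1-\zeta)\mathbb Z[\zeta]$; otherwise Lemma~\ref{lem:Upart} or Lemma~\ref{lem:UpartNonsimple} splits $X$ into two equal halves and the contribution is $N\bigl(\operatorname{wt}(\mathbf w)+\operatorname{wt}(\mathbf w^\top)\bigr)$, which lies in $(1-\zeta)N\mathbb Z[\zeta]$ by Remark~\ref{rem:weightsum} (or by Remark~\ref{rem:nonsimple} for non-simple walks). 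No separate treatment of the identity or of $r^{N/2}$ is needed, and no congruence for $\operatorname{tr}(A^N)$ stronger than membership in $(1-\zeta)\mathbb Z[\zeta]$ is required --- nor is one available.
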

\begin{proof}
For each $\mathbf w \in W_N(\Gamma)$, denote by $\operatorname{stab}_{D_N}(\mathbf w)$ the stabiliser of $\mathbf w$ under the action of $D_N$ and the set of orbits is denoted by $\mathcal O = \{\operatorname{orb}_{D_N}(\mathbf w) \; | \; \mathbf w \in W_N(\Gamma) \}$.
Using the orbit-stabiliser theorem, we can write
    \begin{align*}
        \sum_{g \in D_N} \sum_{\mathbf w \in \operatorname{fix}_\Gamma(g)} \operatorname{wt}(\mathbf w) 
        &= \sum_{\mathbf w \in W_N(\Gamma)} \sum_{g \in \operatorname{stab}_{D_N}(\mathbf w)} \operatorname{wt}(\mathbf w) 
        \\
        &= \sum_{\mathbf w \in W_N(\Gamma)}  \frac{2N \operatorname{wt}(\mathbf w)}{|\operatorname{orb}_{D_N}(\mathbf w)|}
         \\
        &= \sum_{X \in \mathcal O}\sum_{\mathbf w \in X}  \frac{2N \operatorname{wt}(\mathbf w)}{|X|}.
    \end{align*}
    By Lemma~\ref{lem:UpartNonsimple}, for each orbit $X \in \mathcal O$, the number of distinct weights of walks in $X$ is at most $2$. 
    Now we partition $\mathcal O$ as  $\mathcal O = \mathcal O_0 \sqcup \mathcal O_1 \sqcup \mathcal O_2$, where $\mathcal O_0$ is the set of orbits consisting of simple walks, $\mathcal O_1$ is the set of orbits that contain non-simple walks with just one weight, and $\mathcal O_2$ is the set of orbits that contain non-simple walks with two distinct weights.
    We consider each part separately.
    
    First, we consider the orbits that consist of simple walks.
    By Lemma~\ref{lem:Upart}, for each $X \in \mathcal O_0$, there exists a subset $U(X) \subset X$ such that
        \[
       X = U(X) \; \sqcup \;  \Psi(U(X)),
    \]
    and, by Remark~\ref{rem:sameWeight}, each walk in $U(X)$ has the same weight.
    Define $\operatorname{wt}(X) := \operatorname{wt}(\mathbf w)+\operatorname{wt}(\mathbf w^\top)$ for some $\mathbf w \in X$.
    Then
    \begin{align*}
        \sum_{X \in \mathcal O_0}\sum_{\mathbf w \in X}  \frac{2N \operatorname{wt}(\mathbf w)}{|X|} &= 
         \sum_{X \in \mathcal O_0}\sum_{\mathbf w \in U(X)}  N\frac{ \operatorname{wt}(\mathbf w)+\operatorname{wt}(\mathbf w^\top)}{|U(X)|} = \sum_{X \in \mathcal O_0} \operatorname{wt}(X) N.
    \end{align*}
    By Remark~\ref{rem:weightsum}, we have $\operatorname{wt}(X) \in (1-\zeta)\mathbb Z[\zeta]$ for all $X \in \mathcal O_0$, which implies that $\sum_{X \in \mathcal O_0} \operatorname{wt}(X) N \in (1-\zeta)N\mathbb Z[\zeta]$.
    
    Next we consider the orbits that contain non-simple walks and have just one distinct weight.
    By Lemma~\ref{lem:UpartNonsimple}, for each $X \in \mathcal O_1$, every walk in $X$ has the same weight.
    Define $\operatorname{wt}(X) := \operatorname{wt}(\mathbf w)$ for some $\mathbf w \in X$.
    Since $q > 2$ is a power of $2$, by Remark~\ref{rem:nonsimple}, we have $\operatorname{wt}(X) \in (1-\zeta)\mathbb Z[\zeta]$ for each $X \in \mathcal O_1$.
    Therefore 
        \begin{align*}
        \sum_{X \in \mathcal O_1}\sum_{\mathbf w \in X}  \frac{2N \operatorname{wt}(\mathbf w)}{|X|} &=  \sum_{X \in \mathcal O_1} 2\operatorname{wt}(X) N \in (1-\zeta)N\mathbb Z[\zeta].
    \end{align*}
    
    Finally, we consider the orbits that contain non-simple walks and have two distinct weights.
    By Lemma~\ref{lem:UpartNonsimple}, for each $X \in \mathcal O_2$, there exists a subset $U(X) \subset X$ such that
        \[
       X = U(X) \; \sqcup \;  \{ \mathbf x^\top \; |\; \mathbf x \in U(X)\},
    \]
    and each walk in $U$ has the same weight.
    Define $\operatorname{wt}(X) := \operatorname{wt}(\mathbf w)+\operatorname{wt}(\mathbf w^\top)$ for some $\mathbf w \in X$.
    Since $q > 2$ is a power of $2$, by Remark~\ref{rem:nonsimple}, we have $\operatorname{wt}(X) \in (1-\zeta)\mathbb Z[\zeta]$ for each $X \in \mathcal O_2$.
    Therefore 
        \begin{align*}
        \sum_{X \in \mathcal O_2}\sum_{\mathbf w \in X}  \frac{2N \operatorname{wt}(\mathbf w)}{|X|} &= 
         \sum_{X \in \mathcal O_2}\sum_{\mathbf w \in U(X)}  N\frac{ \operatorname{wt}(\mathbf w)+\operatorname{wt}(\mathbf w^\top)}{|U(X)|} = \sum_{X \in \mathcal O_2} \operatorname{wt}(X) N
         \in (1-\zeta)N\mathbb Z[\zeta].
    \end{align*}
    By combining the above, we find that 
    \[
    \sum_{X \in \mathcal O}\sum_{\mathbf w \in X}  \frac{2N \operatorname{wt}(\mathbf w)}{|X|} = \sum_{X \in \mathcal O_0}\sum_{\mathbf w \in X}  \frac{2N \operatorname{wt}(\mathbf w)}{|X|} + \sum_{X \in \mathcal O_1}\sum_{\mathbf w \in X}  \frac{2N \operatorname{wt}(\mathbf w)}{|X|} + \sum_{X \in \mathcal O_2}\sum_{\mathbf w \in X}  \frac{2N \operatorname{wt}(\mathbf w)}{|X|} \in (1-\zeta)N\mathbb Z[\zeta],
    \]
    as required.
\end{proof}

In the next lemma, we write summands from Lemma~\ref{lem:weightedBurnside} in terms of the matrix $A=(J-H)/(1-\zeta)$.
Recall that $r$ and $s$ are the generators of $D_N = \langle r, s \; |\; r^N, s^2, (rs)^2 \rangle$ and define $\mathfrak W_N(H)$ as
\[
 \mathfrak W_N(H):= \left \{ \mathbf w \in \operatorname{fix}_{\Gamma(H)}(rs) \; \mid \; \frac{(1-\zeta)^2}{4}\operatorname{wt}(\mathbf w) \not \in (1-\zeta)\mathbb Z[\zeta] \right \}.
\]
Note that each walk $\mathbf w = (w_0,w_1,\dots,w_{N}) \in \mathfrak W_N(H)$ is not simple.
Indeed, $\mathbf w$ is fixed by $rs$, which means $w_0 = w_1$.

\begin{lemma}
    \label{lem:fixToTrace}
    Let $n, \in \mathbb N$, $q>2$, $H \in \mathcal H_n(q)$, $A=(J-H)/(1-\zeta)$, $\Gamma=\Gamma(H)$, and $N \geqslant 3$ be an integer.
    Then
    \begin{itemize}
\item[(i)] 
     $\displaystyle\sum_{\mathbf w \in \operatorname{fix}_{\Gamma}(r^k)}\operatorname{wt}(\mathbf w)= \operatorname{tr} \left( \left(A^{\circ \frac{N}{\gcd{(k,N)}}}\right)^{\gcd{(k,N)}} \right)$, for all $k \in \mathbb Z$;

\item[(ii)] 
     if $N$ is even, $\displaystyle\sum_{\mathbf w \in \operatorname{fix}_{\Gamma}(r^{2k}s)}\operatorname{wt}(\mathbf w)=\mathbf 1^\top ( A \circ A^\top )^\frac{N}{2} \mathbf 1 $, for all $k\in \mathbb Z$.

\item[(iii)]
     if $q$ is a power of $2$,
     \[
     \begin{cases} 
         \displaystyle \sum_{k=0}^{N-1}\sum_{\mathbf w \in \operatorname{fix}_{\Gamma}(r^{k}s)}\operatorname{wt}(\mathbf w) \in (1-\zeta)N\mathbb Z[\zeta],\; & \text{if $N$ is odd;}\\      
\\
    \displaystyle \sum_{k=0}^{N/2-1}\sum_{\mathbf w \in \operatorname{fix}_{\Gamma}(r^{2k+1}s)}\operatorname{wt}(\mathbf w) - N|\mathfrak W_N(H)| \in (1-\zeta)N\mathbb Z[\zeta],\; & \text{if $N$ is even.} 
    \end{cases}
     \]
\item[(iv)]
     if $q$ is not a power of $2$,
     \[
     \begin{cases} 
         \displaystyle \sum_{k=0}^{N-1}\sum_{\mathbf w \in \operatorname{fix}_{\Gamma}(r^{k}s)}\operatorname{wt}(\mathbf w) =0,\; & \text{if $N$ is odd;}\\      
\\
    \displaystyle \sum_{k=0}^{N/2-1}\sum_{\mathbf w \in \operatorname{fix}_{\Gamma}(r^{2k+1}s)}\operatorname{wt}(\mathbf w) =0,\; & \text{if $N$ is even.} 
    \end{cases}
     \]
     \end{itemize}

\end{lemma}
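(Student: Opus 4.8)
The plan is to prove each of the three claims by interpreting the fixed-point sums combinatorially and then reducing to matrix identities via Remark~\ref{rem:weightPowers}.

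For part (i), I would observe that a closed $N$-walk $\mathbf w$ is fixed by $r^k$ precisely when it is periodic with period dividing $\gcd(k,N)$; writing $d = \gcd(k,N)$, such a walk is obtained by concatenating $N/d$ copies of a closed $d$-walk, and its weight is the $(N/d)$th power of the weight of that $d$-walk. Summing over all closed $d$-walks based at each vertex and applying Remark~\ref{rem:weightPowers} (with $l = N/d$ and $k = d$ there) gives $\sum_{\mathbf w \in \operatorname{fix}_\Gamma(r^k)} \operatorname{wt}(\mathbf w) = \operatorname{tr}\bigl((A^{\circ N/d})^{d}\bigr)$, which is the claim. The only subtlety is checking that the correspondence between $\operatorname{fix}_\Gamma(r^k)$ and the set of closed $d$-walks (counted with multiplicity across starting points) is a weight-preserving bijection; this is a standard periodicity argument.

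For part (ii), a walk $\mathbf w = (w_0, \dots, w_N)$ fixed by the reflection $r^{2k}s$ is determined by a palindromic condition: after re-indexing (conjugating $s$ by $r^k$ just rotates which axis the reflection uses), being fixed by a reflection through two opposite vertices of the $N$-gon forces $w_i = w_{-i}$ for all $i$ (indices mod $N$). Since $N$ is even, such a walk is freely determined by the half-walk $(w_0, w_1, \dots, w_{N/2})$, and its weight is $\prod_{i=0}^{N/2-1} A_{w_i, w_{i+1}} \cdot \prod_{i=0}^{N/2-1} A_{w_{N-i-1}, w_{N-i}}$; using $w_{N-i} = w_i$ and rewriting the second product as $\prod_{i} A_{w_{i+1}, w_i} = \prod_i A^\top_{w_i, w_{i+1}}$, the weight becomes $\prod_{i=0}^{N/2-1} (A \circ A^\top)_{w_i, w_{i+1}}$. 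Summing over all choices of the half-walk and applying Remark~\ref{rem:weightPowers} to the matrix $A \circ A^\top$ yields $\mathbf 1^\top (A \circ A^\top)^{N/2} \mathbf 1$. I need to be slightly careful that the reflection axis through two vertices is the one giving the even-index reflections $r^{2k}s$ in the chosen presentation, but this is just bookkeeping about the dihedral action.

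Part (iii) is where the real work lies, and I expect it to be the main obstacle. The idea is to combine Lemma~\ref{lem:weightedBurnside} with parts (i) and (ii): the Burnside-type sum $\sum_{g \in D_N}\sum_{\mathbf w \in \operatorname{fix}_\Gamma(g)}\operatorname{wt}(\mathbf w)$ lies in $(1-\zeta)N\mathbb Z[\zeta]$, and the rotation part $\sum_{k=0}^{N-1}\sum_{\mathbf w \in \operatorname{fix}_\Gamma(r^k)}\operatorname{wt}(\mathbf w)$ needs to be shown to lie in $(1-\zeta)N\mathbb Z[\zeta]$ as well (grouping the $N$ rotations into conjugacy classes of equal size $N/d$ and using that $\operatorname{tr}((A^{\circ N/d})^d) = \operatorname{tr}((A^{\circ N/d})^d)$ is itself a sum of weights over an orbit-structured set, so the argument of Lemma~\ref{lem:weightedBurnside} restricted to rotations applies — or more directly, noting $\sum_{k}\sum_{\mathbf w\in\operatorname{fix}(r^k)}\operatorname{wt}(\mathbf w) = \sum_{\mathbf w \in W_N(\Gamma)} \tfrac{N\operatorname{wt}(\mathbf w)}{|\operatorname{orb}_{\langle r\rangle}(\mathbf w)|}$ and pairing $\mathbf w$ with $\Psi(\mathbf w)$ or its rotates via Remark~\ref{rem:weightsum}). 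Subtracting, the reflection part $\sum_{g \text{ a reflection}}\sum_{\mathbf w \in \operatorname{fix}_\Gamma(g)}\operatorname{wt}(\mathbf w)$ lies in $(1-\zeta)N\mathbb Z[\zeta]$. When $N$ is odd all $N$ reflections are conjugate (each through one vertex and the opposite edge midpoint), giving $N$ equal summands and immediately the first case. When $N$ is even there are two conjugacy classes of reflections: the $N/2$ "through two vertices" ones handled by part (ii) — and since $A \circ A^\top$ has entries in $(1-\zeta)\mathbb Z[\zeta]$ when $q$ is a power of $2$ only on... actually here I must track that $\mathbf 1^\top(A\circ A^\top)^{N/2}\mathbf 1 \in (1-\zeta)\mathbb Z[\zeta]$ needs justification, or it gets absorbed — and the $N/2$ "through two edge midpoints" ones, which are the $r^{2k+1}s$ terms. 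The term $N|\mathfrak W_N(H)|$ must come from isolating, within $\sum_{k}\sum_{\mathbf w \in \operatorname{fix}(r^{2k+1}s)}\operatorname{wt}(\mathbf w)$, the contribution of walks whose weight is a "bad" half-integer-type element (those with $\tfrac{(1-\zeta)^2}{4}\operatorname{wt}(\mathbf w) \notin (1-\zeta)\mathbb Z[\zeta]$), each contributing a unit modulo $(1-\zeta)N\mathbb Z[\zeta]$ after the $N/2$ conjugates and the factor-of-2 from $2N/|X|$ are accounted for. The delicate part is the precise $2$-adic valuation bookkeeping: showing that a walk in $\mathfrak W_N(H)$ contributes exactly $N$ (mod $(1-\zeta)N\mathbb Z[\zeta]$) to the combined sum while all other reflection-fixed walks contribute something in $(1-\zeta)N\mathbb Z[\zeta]$, using that the diagonal entries of $A$ are $0$ or $2/(1-\zeta)$ and that $4/(1-\zeta)^2 \cdot 2/(1-\zeta) \in (1-\zeta)\mathbb Z[\zeta]$ while $4/(1-\zeta)^2$ alone is a unit up to the relevant power. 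I would organize this by writing each reflection-fixed walk's weight explicitly as a product over a fundamental domain of the reflection, separating the "axis" contributions (diagonal entries of $A$, i.e. the $2/(1-\zeta)$ factors) from the rest, and then counting residues carefully.
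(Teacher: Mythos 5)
Your arguments for parts (i) and (ii) are correct and essentially the paper's own: analyse the cycle structure of $r^k$ (respectively $r^{2k}s$) on the $N$ positions of the closed walk, observe that a fixed walk is a closed $\gcd(k,N)$-walk repeated $N/\gcd(k,N)$ times (respectively a half-walk followed by its reverse), and translate via Remark~\ref{rem:weightPowers}. Part (iii), however, is where your proposal has a genuine gap, and it is structural rather than a matter of bookkeeping. You propose to deduce (iii) by subtracting the rotation contribution from the Burnside sum of Lemma~\ref{lem:weightedBurnside}. First, the intermediate claim that $\sum_{k}\sum_{\mathbf w\in\operatorname{fix}_\Gamma(r^k)}\operatorname{wt}(\mathbf w)\in(1-\zeta)N\mathbb Z[\zeta]$ is, in the paper's architecture, a \emph{consequence} of (iii) (for odd $N$ it is exactly Corollary~\ref{cor:mainHSgenOdd}), and your proposed independent proof of it does not close: writing the rotation sum as $N\sum_{O}\operatorname{wt}(O)$ over $\langle r\rangle$-orbits $O$, an orbit with $O=O^\top$ contributes $N\operatorname{wt}(\mathbf w)$ for a single walk, and Remark~\ref{rem:weightsum} controls only $\operatorname{wt}(\mathbf w)+\operatorname{wt}(\mathbf w^\top)$ (a quantity that lies in $(1-\zeta)\mathbb Z[\zeta]$ automatically, since the two weights are congruent modulo $(1-\zeta)$ and $2\equiv 0$), so it gives no information about a single weight. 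Second, for $N$ even the intermediate claim is false in general --- that is precisely why Corollary~\ref{cor:mainHSgen} carries the correction terms $N|\mathfrak W_N(H)|$ and $\tfrac{N}{2}\mathbf 1^\top(A\circ A^\top)^{N/2}\mathbf 1$ --- so ``subtracting the rotation part'' cannot isolate the odd-reflection sum without already knowing the content of (iii).

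The missing idea, which you only gesture at in your final sentence, is the paper's direct argument: a reflection $r^ks$ with $N$ odd (respectively $r^{2k+1}s$ with $N$ even) transposes one (respectively two) pairs of \emph{adjacent} positions of the $N$-gon, so any fixed walk must satisfy $w_i=w_{i+1}$ at those positions and hence traverse a loop, picking up a factor $A_{w_i,w_i}=2/(1-\zeta)\in(1-\zeta)\mathbb Z[\zeta]$ (respectively $4/(1-\zeta)^2\in 2\mathbb Z[\zeta]$, so $\operatorname{wt}(\mathbf w)=2\alpha_{\mathbf w}$). Since these $N$ (respectively $N/2$) reflections are conjugate by rotations, their fixed-walk weight sums coincide, and the total is $N\sum_{\mathbf w\in\operatorname{fix}(s)}\operatorname{wt}(\mathbf w)\in(1-\zeta)N\mathbb Z[\zeta]$ in the odd case, and $\tfrac{N}{2}\sum_{\mathbf w\in\operatorname{fix}(rs)}\operatorname{wt}(\mathbf w)=N\sum_{\mathbf w}\alpha_{\mathbf w}$ in the even case, where the residue of $\sum_{\mathbf w}\alpha_{\mathbf w}$ modulo $(1-\zeta)$ is what the term $|\mathfrak W_N(H)|$ records. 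No appeal to Lemma~\ref{lem:weightedBurnside} is needed for this lemma; indeed the logical flow runs the other way.
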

\begin{proof}
    Let $\mathbf w=(w_0,w_1,\dots,w_{N-1},w_N) \in \operatorname{fix}_{\Gamma}(g)$ for some $g\in D_N$.
    We split in to cases for the group element $g$.

    First suppose $g=r^k$ for some $k \in \mathbb Z$.
    Then $\operatorname{ord}(g)=\frac{N}{\gcd(k,N)}$.
    Therefore $g$ has $\gcd(k,N)$ cycles each of length $\operatorname{ord}(g)$.
    It follows that  
    $\mathbf w$ is a closed $\gcd(k,N)$-walk $\mathbf x$ repeated $\operatorname{ord}(g)$ times.
    Thus $\operatorname{wt}(\mathbf w)=\operatorname{wt}
    (\mathbf x)^{\operatorname{ord}(g)}$, from which, using Remark~\ref{rem:weightPowers}, part (i) follows. 
    
    Suppose that $N$ is even and $g=r^{2k}s$ for some $k\in \mathbb Z$. 
    Then $g$ consists of $N/2-1$ cycles of length 2 and two cycles of length 1. 
    Without loss of generality, we may assume $w_0$ and 
    $w_{\frac{N}{2}}$ are fixed by $g$.
    Then $\mathbf w$ is a closed $N$-walk made up of an $N/2$-walk $\mathbf x$ followed by its reverse walk $\mathbf x^\top$.
    Hence the sum of such walks is equal to the sum of each entry of the matrix $( A \circ A^\top )^{N/2}$.
    Whence we have part (ii).
    
    At this point, note that all walks in $\operatorname{fix}_{\Gamma}(g)$ must contain a loop if $N$ is odd and $g =r^ks$ or if $N$ is even and $g=r^{2k+1}s$ for some $k$.
    By Remark~\ref{rem:nonsimple}, if $q$ is not a power of $2$ then $\Gamma$ is a simple graph thus part (iv) follows immediately.
    It remains to assume that $q$ is a power of $2$.
    
    Suppose that $N$ is odd and $g=r^{k}s$ for some $k\in \mathbb Z$.
    Then $g$ consists of $(N-1)/2$ cycles of length $2$.
    This means that one pair of adjacent vertices of the walk $\mathbf w$ must be the same.    
    When $g=s$, we have $w_{\frac{N-1}{2}} = w_{\frac{N+1}{2}}$, that is, there is a loop at $w_{\frac{N-1}{2}}$.
    Thus $A_{w_{\frac{N-1}{2}},w_{\frac{N-1}{2}} } = 2/(1-\zeta)$.
    Note that, since $q>2$ is a power of $2$, by Proposition~\ref{pro:idealprelim} (f), we have $2/(1-\zeta) \in (1-\zeta)\mathbb Z[\zeta]$.
    Therefore, we can write $\operatorname{wt}(\mathbf w) = (1-\zeta)\alpha_{\mathbf w}$ for some $\alpha_{\mathbf w} \in \mathbb Z[\zeta]$.
    Observe that $\mathbf w \in \operatorname{fix}_{\Gamma}(r^{2i}s)$ for some $i \in \{0,\dots,N-1\}$ if and only if $\mathbf w^i \in \operatorname{fix}_{\Gamma}(s)$.
    Indeed, in both cases (with indices reduced modulo $N$) we have $w_{i+j} = w_{N+i-j}$ for all $j \in \{0,\dots,N-1\}$.
    Since $\operatorname{wt}(\mathbf w^i) = \operatorname{wt}(\mathbf w)$, it follows that
    $\sum_{\mathbf w \in \operatorname{fix}_{\Gamma}(r^{2i}s)}\operatorname{wt}(\mathbf w) = \sum_{\mathbf w \in \operatorname{fix}_{\Gamma}(s)}\operatorname{wt}(\mathbf w)$.
    Hence,

    \[
    \sum_{k=0}^{N-1}\sum_{\mathbf w \in \operatorname{fix}_{\Gamma}(r^{k}s)}\operatorname{wt}(\mathbf w) =
         \sum_{i=0}^{N-1}\sum_{\mathbf w \in \operatorname{fix}_{\Gamma}(r^{2i}s)}\operatorname{wt}(\mathbf w) 
         = N\sum_{\mathbf w \in \operatorname{fix}_{\Gamma}(s)}\operatorname{wt}(\mathbf w) 
        = N(1-\zeta)\sum_{\mathbf w \in \operatorname{fix}_{\Gamma}(s)}\alpha_{\mathbf w} \in (1-\zeta)N\mathbb Z[\zeta].
    \]
    
    Finally, suppose that $N$ is even and $g=r^{2k+1}s$ for some $k\in \mathbb Z$.
    Then $g$ consists of $N/2$ cycles of length $2$.
    This means that two pairs of adjacent vertices of the walk $\mathbf w$ must be the same.    
    When $g=rs$, we have $w_0 = w_1$ and 
    $w_{\frac{N}{2}} = w_{\frac{N}{2}+1}$.
    This means that there are loops at $w_0$ and $w_{\frac{N}{2}}$.
    Thus $A_{w_0,w_0} = A_{w_{\frac{N}{2}},w_{\frac{N}{2}}} = 2/(1-\zeta)$.
    Note that, since $q>2$ is a power of $2$, we have $4/(1-\zeta)^2 \in 2\mathbb Z[\zeta]$.
    Therefore, we can write $\operatorname{wt}(\mathbf w) = 2\alpha_{\mathbf w}$ for some $\alpha_{\mathbf w} \in \mathbb Z[\zeta]$.
    Observe that $\mathbf w \in \operatorname{fix}_{\Gamma}(r^{2k+1}s)$ if and only if $\mathbf w^k \in \operatorname{fix}_{\Gamma}(rs)$.
    Indeed, in both cases (with indices reduced modulo $N$) we have $w_{k+j} = w_{N+k-j}$ for all $j \in \{0,\dots,N/2-1\}$.
    Since $\operatorname{wt}(\mathbf w^k) = \operatorname{wt}(\mathbf w)$, it follows that
    $\sum_{\mathbf w \in \operatorname{fix}_{\Gamma}(r^{2k+1}s)}\operatorname{wt}(\mathbf w) = \sum_{\mathbf w \in \operatorname{fix}_{\Gamma}(rs)}\operatorname{wt}(\mathbf w)$.
    Hence,
    \[
    \sum_{k=0}^{N/2-1}\sum_{\mathbf w \in \operatorname{fix}_{\Gamma}(r^{2k+1}s)}\operatorname{wt}(\mathbf w) = \frac{N}{2}\sum_{\mathbf w \in \operatorname{fix}_{\Gamma}(rs)}\operatorname{wt}(\mathbf w) = N\sum_{\mathbf w \in \operatorname{fix}_{\Gamma}(rs)}\alpha_{\mathbf w}.
    \]
    By Proposition~\ref{pro:idealprelim} (g), we have $\sum_{\mathbf w \in \operatorname{fix}_{\Gamma}(rs)}\alpha_{\mathbf w} - |\mathfrak W_N(H)| \in (1-\zeta)\mathbb Z[\zeta]$.
    Whence we have part (iii).
\end{proof}

By combining Lemma~\ref{lem:weightedBurnside} and Lemma~\ref{lem:fixToTrace}, we arrive at the following corollaries.
These results should be compared with Theorem~\ref{thm:hs}.

\begin{corollary}
\label{cor:mainHSgenOdd}
Let $n \in \mathbb N$,
$q > 2$ be a power of $2$, $N \geqslant 3$ be an odd integer, $H \in \mathcal H_n(q)$, and $A=(J-H)/(1-\zeta)$.
 Then
  $$\sum_{d \;|\; N} \phi(N/d) \operatorname{tr}((A^{\circ N/d})^d) \in (1-\zeta)N \mathbb Z[\zeta].$$
\end{corollary}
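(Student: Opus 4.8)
The plan is to read the statement off directly from the orbit-counting identity in Lemma~\ref{lem:weightedBurnside}, after separating the contributions of rotations and reflections in $D_N$. Writing $D_N = \{ r^k \;|\; 0 \leqslant k \leqslant N-1 \} \cup \{ r^k s \;|\; 0 \leqslant k \leqslant N-1 \}$, we split
\[
\sum_{g \in D_N} \sum_{\mathbf w \in \operatorname{fix}_\Gamma(g)} \operatorname{wt}(\mathbf w) = \sum_{k=0}^{N-1} \sum_{\mathbf w \in \operatorname{fix}_\Gamma(r^k)} \operatorname{wt}(\mathbf w) \;+\; \sum_{k=0}^{N-1} \sum_{\mathbf w \in \operatorname{fix}_\Gamma(r^k s)} \operatorname{wt}(\mathbf w).
\]
By Lemma~\ref{lem:weightedBurnside} the left-hand side lies in $(1-\zeta)N\mathbb Z[\zeta]$, and, since $N$ is odd, the reflection sum on the right lies in $(1-\zeta)N\mathbb Z[\zeta]$ by Lemma~\ref{lem:fixToTrace}(iii). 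Hence the rotation sum $\sum_{k=0}^{N-1} \sum_{\mathbf w \in \operatorname{fix}_\Gamma(r^k)} \operatorname{wt}(\mathbf w)$ also lies in $(1-\zeta)N\mathbb Z[\zeta]$.

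It then remains to identify this rotation sum with $\sum_{d \mid N} \varphi(N/d)\operatorname{tr}((A^{\circ N/d})^d)$. By Lemma~\ref{lem:fixToTrace}(i) we have $\sum_{\mathbf w \in \operatorname{fix}_\Gamma(r^k)}\operatorname{wt}(\mathbf w) = \operatorname{tr}((A^{\circ N/\gcd(k,N)})^{\gcd(k,N)})$, which depends on $k$ only through $d := \gcd(k,N)$. Grouping the indices $k \in \{0,\dots,N-1\}$ according to the value of $d$, and using the elementary count that exactly $\varphi(N/d)$ of them satisfy $\gcd(k,N) = d$ for each divisor $d$ of $N$, one obtains
\[
\sum_{k=0}^{N-1} \sum_{\mathbf w \in \operatorname{fix}_\Gamma(r^k)} \operatorname{wt}(\mathbf w) = \sum_{d \mid N} \varphi(N/d) \operatorname{tr}\!\left( \left( A^{\circ N/d} \right)^{d} \right),
\]
and combining this with the previous paragraph finishes the argument.

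There is essentially no genuine obstacle here: all the substantive work has already been done in Lemmas~\ref{lem:weightedBurnside} and \ref{lem:fixToTrace}, and what remains is purely bookkeeping. The only points I would double-check are the elementary divisor count $\#\{0 \leqslant k \leqslant N-1 : \gcd(k,N) = d\} = \varphi(N/d)$ and the exponent substitution $N/\gcd(k,N) = N/d$, paying particular attention to the term $d = N$ coming from $k = 0$ (the identity of $D_N$), which contributes $\varphi(1)\operatorname{tr}(A^N) = \operatorname{tr}((A^{\circ 1})^N)$, so that the convention $A^{\circ 1} = A$ is respected throughout.
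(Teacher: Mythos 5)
Your argument is correct and is precisely the combination of Lemma~\ref{lem:weightedBurnside} and Lemma~\ref{lem:fixToTrace} that the paper itself invokes (the paper omits the details, stating only that the corollary follows by combining those two lemmas). Your bookkeeping of the rotation sum via the divisor count $\#\{0 \leqslant k \leqslant N-1 : \gcd(k,N)=d\} = \varphi(N/d)$ and the subtraction of the reflection contribution using part (iii) for odd $N$ is exactly the intended route.
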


For the case when $N$ is even, we first state a slightly weaker generalisation of 
Theorem~\ref{thm:hs},
which applies only to $q$-Seidel matrices.

\begin{corollary}
\label{cor:mainSeidelgen}
Let $n \in \mathbb N$,
$q$ be a power of $2$, $N \geqslant 4$ be an even integer, $S$ be a $q$-Seidel matrix, and $A=(J-I-S)/(1-\zeta)$.
 Then
  $$\sum_{d \;|\; N} \phi(N/d) \operatorname{tr}((A^{\circ N/d})^d) + \frac{N}{2}\mathbf 1^\top (A \circ A^\top)^{N/2} \mathbf 1 \in (1-\zeta)N \mathbb Z[\zeta].$$
\end{corollary}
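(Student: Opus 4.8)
The plan is to apply Lemma~\ref{lem:weightedBurnside} to the matrix $A = (J-I-S)/(1-\zeta)$ together with $\Gamma = \Gamma(H)$ where $H = I + S \in \mathcal H_n(q)$, and then rewrite each of the two sums over $D_N$ using Lemma~\ref{lem:fixToTrace}. Since $S$ is a $q$-Seidel matrix, the diagonal of $H$ is all $1$s, so the diagonal of $A$ is all $0$; hence $\Gamma(H)$ is loopless and every closed walk involving a diagonal step has weight $0$. In particular the set $\mathfrak W_N(H)$ is empty, so the ``even $N$'' clause of Lemma~\ref{lem:fixToTrace}(iii) simplifies to $\sum_{k=0}^{N/2-1}\sum_{\mathbf w \in \operatorname{fix}_\Gamma(r^{2k+1}s)}\operatorname{wt}(\mathbf w) \in (1-\zeta)N\mathbb Z[\zeta]$.

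Next I would split the double sum $\sum_{g \in D_N}\sum_{\mathbf w \in \operatorname{fix}_\Gamma(g)} \operatorname{wt}(\mathbf w)$ into the rotational part $g = r^k$, $k \in \{0,\dots,N-1\}$, and the reflective part $g = r^k s$, $k \in \{0,\dots,N-1\}$. For the rotational part, Lemma~\ref{lem:fixToTrace}(i) gives $\sum_{k=0}^{N-1}\operatorname{tr}\bigl((A^{\circ N/\gcd(k,N)})^{\gcd(k,N)}\bigr)$; grouping the indices $k$ by the value $d = \gcd(k,N)$ and counting that exactly $\varphi(N/d)$ residues $k$ satisfy $\gcd(k,N) = d$ converts this to $\sum_{d \mid N} \varphi(N/d)\operatorname{tr}((A^{\circ N/d})^d)$, exactly the first term of the claimed expression. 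For the reflective part with $N$ even, the $N$ reflections split into the $N/2$ reflections of type $r^{2k}s$ and the $N/2$ of type $r^{2k+1}s$. By Lemma~\ref{lem:fixToTrace}(ii), each of the former contributes $\mathbf 1^\top (A \circ A^\top)^{N/2}\mathbf 1$, for a total of $\tfrac{N}{2}\mathbf 1^\top(A\circ A^\top)^{N/2}\mathbf 1$, the second claimed term; and by the simplified (iii), the sum over the latter lies in $(1-\zeta)N\mathbb Z[\zeta]$ and may be discarded modulo that ideal. Assembling these pieces and invoking Lemma~\ref{lem:weightedBurnside} yields the stated congruence.

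The only genuinely delicate point is bookkeeping: one must be careful that Lemma~\ref{lem:weightedBurnside} is stated for $q > 2$, whereas Corollary~\ref{cor:mainSeidelgen} allows $q = 2$ as well, so the $q=2$ case needs a separate (easier) justification — there $2 = (1-\zeta)^2$ up to a unit and $A$ is an honest $\{0,1\}$ adjacency matrix, so one can fall back on the Harary--Schwenk congruence \cite[Corollary 5a]{hs79} (equivalently \cite[Lemma 2.2]{GreavesYatsyna19}) directly, noting $(1-\zeta)N\mathbb Z[\zeta] = N\mathbb Z$ degenerates appropriately and the extra $\tfrac{N}{2}\mathbf 1^\top(A\circ A^\top)^{N/2}\mathbf 1$ term is an even integer times $N/2$. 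The other mild subtlety is checking the index-counting identity $\#\{k \in \{0,\dots,N-1\} : \gcd(k,N) = d\} = \varphi(N/d)$ and that the two partitions of the $N$ reflections into even- and odd-index types each have size $N/2$ — both routine. I expect the reflective odd-index sum vanishing (i.e. the emptiness of $\mathfrak W_N(H)$ forcing the clean form of Lemma~\ref{lem:fixToTrace}(iii)) to be the conceptual crux that makes the Seidel case cleaner than the general $\mathcal H_n(q)$ case, and everything else to be assembly.
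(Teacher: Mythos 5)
Your proposal matches the paper's intended argument exactly: the corollary is stated as a direct combination of Lemma~\ref{lem:weightedBurnside} and Lemma~\ref{lem:fixToTrace}, with the rotational part of the $D_N$-sum grouped by $\gcd(k,N)$ to give the divisor sum, the even reflections giving $\tfrac{N}{2}\mathbf 1^\top(A\circ A^\top)^{N/2}\mathbf 1$, and the odd reflections contributing nothing because the zero diagonal of $A$ makes $\mathfrak W_N(H)$ (indeed all of $\operatorname{fix}_\Gamma(r^{2k+1}s)$) empty. Your observation that the $q=2$ case falls outside the hypotheses of those two lemmas and must instead be referred back to \cite[Lemma 2.2]{GreavesYatsyna19} is a valid and worthwhile point of care that the paper glosses over.
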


As we will see below, the corresponding result for matrices $H \in \mathcal H_n(q)$ requires the use of the set $\mathfrak W_N(H)$.

Now, we state a slightly more general version of the above result, which we consider to be the main result of this section.
We will refine this result later in Section~\ref{sec:euler}, where we will deal with the unsightly term involving $|\mathfrak W_N(H)|$.

\begin{corollary}
\label{cor:mainHSgen}
Let $n \in \mathbb N$,
$q > 2$ be a power of $2$, $N \geqslant 4$ be an even integer, $H \in \mathcal H_n(q)$, and $A=(J-H)/(1-\zeta)$.
 Then
  $$\sum_{d \;|\; N} \phi(N/d) \operatorname{tr}((A^{\circ N/d})^d) + N|\mathfrak W_N(H)| +  \frac{N}{2}\mathbf 1^\top (A \circ A^\top)^{N/2} \mathbf 1 \in (1-\zeta)N \mathbb Z[\zeta].$$
\end{corollary}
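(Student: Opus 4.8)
The plan is to unfold the single summation in Lemma~\ref{lem:weightedBurnside}, which we already know lies in $(1-\zeta)N\mathbb Z[\zeta]$, by sorting the $2N$ elements of the dihedral group $D_N$ into three families and applying the matching part of Lemma~\ref{lem:fixToTrace} to each. Since $N$ is even, the reflections split into two conjugacy classes of size $N/2$, so we may write
\[
    D_N = \{\, r^k : 0 \leqslant k \leqslant N-1 \,\} \;\sqcup\; \{\, r^{2k}s : 0 \leqslant k \leqslant \tfrac N2 - 1 \,\} \;\sqcup\; \{\, r^{2k+1}s : 0 \leqslant k \leqslant \tfrac N2 - 1 \,\}.
\]
Accordingly, $\sum_{g \in D_N}\sum_{\mathbf w \in \operatorname{fix}_\Gamma(g)}\operatorname{wt}(\mathbf w)$ breaks into a rotation part, an $r^{2k}s$-part, and an $r^{2k+1}s$-part, and it suffices to evaluate (or estimate modulo the ideal) each one.

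For the rotations, Lemma~\ref{lem:fixToTrace}(i) gives $\sum_{\mathbf w \in \operatorname{fix}_\Gamma(r^k)}\operatorname{wt}(\mathbf w) = \operatorname{tr}((A^{\circ N/\gcd(k,N)})^{\gcd(k,N)})$, and since exactly $\varphi(N/d)$ residues $k \in \{0,\dots,N-1\}$ satisfy $\gcd(k,N) = d$ for each divisor $d \mid N$, this part equals $\sum_{d \mid N}\varphi(N/d)\operatorname{tr}((A^{\circ N/d})^d)$. For the reflections $r^{2k}s$, Lemma~\ref{lem:fixToTrace}(ii) says each of the $N/2$ inner sums equals $\mathbf 1^\top(A\circ A^\top)^{N/2}\mathbf 1$, so this part equals $\tfrac N2\,\mathbf 1^\top(A\circ A^\top)^{N/2}\mathbf 1$. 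Finally, for the reflections $r^{2k+1}s$, the even-$N$ case of Lemma~\ref{lem:fixToTrace}(iii) tells us that $\sum_{k=0}^{N/2-1}\sum_{\mathbf w \in \operatorname{fix}_\Gamma(r^{2k+1}s)}\operatorname{wt}(\mathbf w)$ differs from $N|\mathfrak W_N(H)|$ by an element of $(1-\zeta)N\mathbb Z[\zeta]$ (here $\mathfrak W_N(H)$ is the set named $\mathfrak W(H)$ in the statement, defined just before Lemma~\ref{lem:fixToTrace}). Substituting these three contributions into Lemma~\ref{lem:weightedBurnside} and absorbing the $(1-\zeta)N\mathbb Z[\zeta]$-discrepancy yields
\[
    \sum_{d \mid N}\varphi(N/d)\operatorname{tr}\bigl((A^{\circ N/d})^d\bigr) + N|\mathfrak W_N(H)| + \tfrac N2\,\mathbf 1^\top(A\circ A^\top)^{N/2}\mathbf 1 \in (1-\zeta)N\mathbb Z[\zeta],
\]
which is the assertion.

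Because the substantive work has already been done in Lemmas~\ref{lem:weightedBurnside} and~\ref{lem:fixToTrace}, this proof is essentially bookkeeping over the conjugacy-class structure of $D_N$, and I do not anticipate a genuine obstacle. The one point deserving care is that part~(iii) of Lemma~\ref{lem:fixToTrace} supplies only a congruence, not an identity, so one must keep the discrepancy between the true $r^{2k+1}s$-reflection sum and the combinatorial quantity $N|\mathfrak W_N(H)|$ inside the ideal $(1-\zeta)N\mathbb Z[\zeta]$ throughout, and not merely inside the larger ideal $(1-\zeta)\mathbb Z[\zeta]$. It is also worth recording at the outset that every summand in the statement is a well-defined element of $\mathbb Z[\zeta]$, since all entries of $A = (J-H)/(1-\zeta)$ lie in $\mathbb Z[\zeta]$ when $q$ is a power of $2$ (using $2/(1-\zeta) \in \mathbb Z[\zeta]$).
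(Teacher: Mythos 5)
Your proof is correct and is precisely the argument the paper intends: the corollary is stated as an immediate consequence of combining Lemma~\ref{lem:weightedBurnside} with the three parts of Lemma~\ref{lem:fixToTrace}, exactly via the decomposition of $D_N$ into rotations and the two families of reflections that you use. The bookkeeping (counting $\varphi(N/d)$ rotations with $\gcd(k,N)=d$, the $N/2$ copies of $\mathbf 1^\top(A\circ A^\top)^{N/2}\mathbf 1$, and carrying the discrepancy from part~(iii) inside $(1-\zeta)N\mathbb Z[\zeta]$) is all accurate.
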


We conclude this section with a preparatory result about the trace of powers of the matrix $A$.


\begin{lemma}
    \label{lem:trCong}
    Let $n \in \mathbb N$, $q > 2$ be a power of $2$, $H \in \mathcal H_n(q)$, and $A = (J-H)/(1-\zeta)$.
    Then, for all $k \geqslant 2$, there exists $U \subset W_k^\prime(\Gamma(H))$ such that
    \begin{align}
    \operatorname{tr}( A^k) - \sum_{\mathbf w \in U} \mathrm{wt}(\mathbf w)+\mathrm{wt}(\mathbf w^\top) &\in (1-\zeta)^2\mathbb Z[\zeta] \text{ and } \label{eqn:trAk} \\
    \operatorname{tr}( (A \circ A^\top)^k) - 2\sum_{\mathbf w \in U} \mathrm{wt}(\mathbf w)\mathrm{wt}(\mathbf w^\top) &\in 2(1-\zeta)\mathbb Z[\zeta]. \label{eqn:trAoAt}
    \end{align}
\end{lemma}
\begin{proof}
    Let $\Gamma = \Gamma(H)$ be the underlying graph of $H$.
    First, using Remark~\ref{rem:weightPowers}, observe that $\operatorname{tr}( A^k)$ is equal to the sum of $\operatorname{wt}(\mathbf w)$ over $\mathbf w \in W_k(\Gamma)$.
    Note that the diagonal entries of $A$ are equal to $0$ or $2/(1-\zeta) \in (1-\zeta)\mathbb Z[\zeta]$.
    Therefore, it is clear that the sum of the weights of walks that contain at least $2$ loops is in $(1-\zeta)^2\mathbb Z[\zeta]$.
    
    Let $W$ be the subset of $W_k(\Gamma)$ each of whose walks $\mathbf w$ contains a loop and $\operatorname{wt}(\mathbf w) \not \in (1-\zeta)^2\mathbb Z[\zeta]$.
    We claim that the sum of the weights of the walks in $W$ is in $(1-\zeta)^2\mathbb Z[\zeta]$.
    Each walk $\mathbf w \in W$ has the form
    \[
    \mathbf w = (w_0,w_1,\dots,w_{i-1},w_i,w_i,w_{i+1},\dots,w_{k-1}).
    \]
    We can obtain the simple walk $\mathbf x = (w_0,w_1,\dots,w_{i-1},w_i,w_{i+1},\dots,w_{k-1}) \in W_{k-1}^\prime(\Gamma)$ by removing the edge $(w_i,w_i)$.
    Note that $\operatorname{wt}(\mathbf w) = 2\operatorname{wt}(\mathbf x)/(1-\zeta)$.
    By Corollary~\ref{cor:UpartWalk}, there exists $U \subset W_{k-1}^\prime(\Gamma)$ such that $W_{k-1}^\prime(\Gamma) = U \sqcup \Psi(U)$.
    Let $L(\Gamma)$ be the set of vertices $v$ of $\Gamma$ such that $(v,v)$ is a loop and let $U(v)$ be the subset of $U$ consisting of the walks that contain the vertex $v$.
    Then
    \[
    \sum_{\mathbf w \in W} \operatorname{wt}(\mathbf w) - \frac{2}{1-\zeta}\sum_{v \in L(\Gamma)} \sum_{\mathbf x \in U(v)} \operatorname{wt}(\mathbf x) + \operatorname{wt}(\Psi(\mathbf x)) \in (1-\zeta)^2\mathbb Z[\zeta]; 
    \]
    Our claim then follows from Proposition~\ref{pro:idealprelim} (f) and Remark~\ref{rem:weightsum}.
    Therefore, 
    \[
    \operatorname{tr}( A^k) - \sum_{\mathbf w \in W_k^\prime(\Gamma)} \operatorname{wt}(\mathbf w) \in (1-\zeta)^2\mathbb Z[\zeta]
    \]
    and \eqref{eqn:trAk} follows from Corollary~\ref{cor:UpartWalk}.

    The proof of \eqref{eqn:trAoAt} is similar.
    We merely point out the differences to the above.
    Using Remark~\ref{rem:nonsimple}, we observe that $\operatorname{tr}( (A \circ A^\top)^k)$ equal to the sum of $\mathrm{wt}(\mathbf w)\mathrm{wt}(\mathbf w^\top)$ over $\mathbf w \in  W_k(\Gamma)$.
    Note that the diagonal entries of $A \circ A^\top$ are equal to $0$ or $4/(1-\zeta)^2 \in 2\mathbb Z[\zeta]$.
    If $\mathbf w \in  W_k(\Gamma)$ is not a simple walk then either $\operatorname{wt}(\mathbf w)\mathrm{wt}(\mathbf w^\top) \in 2(1-\zeta)\mathbb Z[\zeta]$ or $\operatorname{wt}(\mathbf w)\mathrm{wt}(\mathbf w^\top) = 4/(1-\zeta)^2 \operatorname{wt}(\mathbf x)\mathrm{wt}(\mathbf x^\top)$ for some $\mathbf x \in   W_{k-1}^\prime(\Gamma)$.
    We leave the rest of the details of the proof of \eqref{eqn:trAoAt} to the reader.
\end{proof}

\section{Determinants and characteristic polynomials}
\label{sec:dets}

In this section, we establish results about the determinants and characteristic polynomials of matrices in $\mathcal H_n(q)$.
At the end of this section we provide proofs of part (c) and part (d) of Theorem~\ref{thm:mainBounds} and Theorem~\ref{thm:mainBounds2}.

Denote by $\mathfrak S_n$ and $\mathfrak D_n$, respectively, the symmetric group and its subset of derangements acting on $\{1,\dots,n\}$.
We begin with a result about the determinant of a matrix closely related to matrices in $\mathcal H_n(q)$.

   \begin{lemma}
    \label{lem:detISMnodd}
    Let $n, q \in \mathbb N$ with $n$ odd, $q > 2$, $H \in \mathcal H_n(q)$, and $A = (J-H)/(1-\zeta)$.      
    Then $\det (A) \in (1-\zeta) \mathbb Z[\zeta]$.
\end{lemma}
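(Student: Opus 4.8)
The plan is to expand $\det(A)$ via the Leibniz formula and partition the permutations in $\mathfrak S_n$ according to how they interact with transposition, mimicking the orbit-pairing arguments of Section~\ref{sec:graphs}. Write $\det(A) = \sum_{\sigma \in \mathfrak S_n} \operatorname{sgn}(\sigma) \prod_{i=1}^n A_{i,\sigma(i)}$. Since $n$ is odd, $A = (J-H)/(1-\zeta)$ is an $n \times n$ matrix whose diagonal entries $A_{i,i} = (1 - H_{i,i})/(1-\zeta)$ are either $0$ (when $q$ is odd) or lie in $(1-\zeta)\mathbb Z[\zeta]$ (when $q$ is a power of $2$, by Remark~\ref{rem:nonsimple}). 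Hence any permutation $\sigma$ with a fixed point contributes a term in $(1-\zeta)\mathbb Z[\zeta]$, and we may reduce modulo $(1-\zeta)$ to the sum over derangements $\mathfrak D_n$.

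Next I would decompose each derangement into disjoint cycles and group derangements into pairs $\{\sigma, \sigma^{-1}\}$: the term for $\sigma$ is $\operatorname{sgn}(\sigma)\prod_i A_{i,\sigma(i)}$ and the term for $\sigma^{-1}$ is $\operatorname{sgn}(\sigma)\prod_i A_{\sigma(i),i} = \operatorname{sgn}(\sigma)\prod_i \overline{A_{i,\sigma(i)}}\cdot(\text{a unit-type correction})$. Here I need the key fact that $H$ is Hermitean, so $A_{j,i} = (1 - \overline{H_{i,j}})/(1-\zeta) = (1-\zeta)^{-1}(1 - \zeta^{-t})$ whenever $A_{i,j} = (1-\zeta)^{-1}(1-\zeta^t)$, and $(1-\zeta^{-t})/(1-\zeta^t)$ is a root of unity, in fact congruent to $1$ modulo $(1-\zeta)$ (since $(1-\zeta^{-t}) - (1-\zeta^t) = \zeta^t - \zeta^{-t} \in (1-\zeta)\mathbb Z[\zeta]$, as $\zeta^t-\zeta^{-t}=\zeta^{-t}(\zeta^{2t}-1)$). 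Therefore $A_{j,i} \equiv A_{i,j} \pmod{(1-\zeta)}$, so the terms for $\sigma$ and $\sigma^{-1}$ are congruent modulo $(1-\zeta)$; since they have the same sign, their sum is $\equiv 2\operatorname{wt}(\sigma) \pmod{(1-\zeta)}$, hence in $(1-\zeta)\mathbb Z[\zeta]$ because $2 \in (1-\zeta)\mathbb Z[\zeta]$. A derangement on $n$ (odd) points, being a product of cycles, is its own inverse only if every cycle has length $\leqslant 2$; with no fixed points that forces a perfect matching, impossible for odd $n$. So \emph{every} derangement is paired with a distinct one, all pairs contribute multiples of $(1-\zeta)$, and $\det(A) \in (1-\zeta)\mathbb Z[\zeta]$.

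The main obstacle is making the pairing-and-congruence bookkeeping airtight: one must confirm that $\sigma \mapsto \sigma^{-1}$ is a genuine fixed-point-free involution on $\mathfrak D_n$ (for $n$ odd), that the signs of $\sigma$ and $\sigma^{-1}$ agree (they do, since conjugate-type permutations have equal sign and inversion preserves cycle type), and that the congruence $A_{j,i} \equiv A_{i,j} \pmod{(1-\zeta)}$ is exactly what is needed to push $\operatorname{wt}(\sigma) + \operatorname{wt}(\sigma^{-1})$ into $2\mathbb Z[\zeta] + (1-\zeta)\mathbb Z[\zeta] = (1-\zeta)\mathbb Z[\zeta]$. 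An alternative, possibly cleaner, route is to interpret the Leibniz terms as weights of disjoint unions of closed walks (cycles) in $\Gamma(H)$ and invoke the walk-reversal machinery of Lemma~\ref{lem:Upart} and Remark~\ref{rem:weightsum} directly; either way the parity obstruction from $n$ odd is what guarantees no term is left unpaired.
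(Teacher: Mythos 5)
Your overall strategy --- Leibniz expansion, discarding permutations with fixed points because the diagonal of $A$ lies in $(1-\zeta)\mathbb Z[\zeta]$, and pairing each derangement $\sigma$ with $\sigma^{-1}$, which is a fixed-point-free pairing on $\mathfrak D_n$ precisely because $n$ is odd --- is exactly the paper's. But the step where you dispose of each pair contains a genuine error. You claim $A_{j,i} \equiv A_{i,j} \pmod{(1-\zeta)}$ by arguing that $(1-\zeta^{-t}) \equiv (1-\zeta^{t}) \pmod{(1-\zeta)}$ and hence that their ratio is $\equiv 1$. That inference is a division fallacy: both $1-\zeta^{t}$ and $1-\zeta^{-t}$ already lie in $(1-\zeta)\mathbb Z[\zeta]$, so their being congruent modulo $(1-\zeta)$ says nothing about the ratio. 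In fact $1-\zeta^{-t} = -\zeta^{-t}(1-\zeta^{t})$, so $A_{j,i} = -\zeta^{-t}A_{i,j} \equiv -A_{i,j} \pmod{(1-\zeta)}$, and consequently $\operatorname{wt}(\sigma^{-1}) \equiv (-1)^{n}\operatorname{wt}(\sigma) = -\operatorname{wt}(\sigma)$, not $+\operatorname{wt}(\sigma)$.

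The sign error then forces you to invoke $2 \in (1-\zeta)\mathbb Z[\zeta]$, which is false for every $q$ in the lemma's scope that is not a power of $2$: for $q$ an odd prime power $p^{f}$ the quotient $\mathbb Z[\zeta]/(1-\zeta)\mathbb Z[\zeta]$ is $\mathbb F_{p}$, in which $2 \not\equiv 0$; your identity $2\mathbb Z[\zeta]+(1-\zeta)\mathbb Z[\zeta]=(1-\zeta)\mathbb Z[\zeta]$ fails for the same reason. (Remark~\ref{rem:weightsum}, which you lean on implicitly, is stated only for $q$ a power of $2$.) The two mistakes are complementary: with the correct congruence $\operatorname{wt}(\sigma^{-1}) \equiv -\operatorname{wt}(\sigma)$, the paired terms cancel modulo $(1-\zeta)$ outright --- this is where the hypothesis that $n$ is odd does its real work --- and no appeal to $2$ lying in the ideal is needed. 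That cancellation is exactly the paper's proof, which writes $\prod_{i} A_{\sigma(i),i} = -\zeta^{-n}\prod_{i}\overline{A_{i,\sigma(i)}}$ and uses $A_{i,j} \equiv \overline{A_{i,j}}$ together with $\zeta^{-n} \equiv 1 \pmod{(1-\zeta)}$. The repair is therefore local, but as written your argument does not establish the lemma for odd prime powers $q$, a case that is genuinely needed downstream (Corollary~\ref{cor:oddbi}, Corollary~\ref{cor:detH}, and part (c) of the main theorems).
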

\begin{proof}
    Using Leibniz's formula, we can write
    \begin{equation}
    \label{detA1}
        \det (A) = \sum_{\sigma \in \mathfrak S_n} \operatorname{sgn}(\sigma)\prod_{i=1}^n A_{i,\sigma(i)},
    \end{equation}
    where $\operatorname{sgn}(\sigma)$ denotes the signature of the permutation $\sigma$.
    Since $q>2$, each diagonal entry of $A$ is in $(1-\zeta)\mathbb Z[\zeta]$, if $\sigma \in \mathfrak S_n$ has a fixed point then $\prod_{i=1}^n A_{i,\sigma(i)} \in (1-\zeta)\mathbb Z[\zeta]$.
    Furthermore, since $n$ is odd, for each $\sigma \in \mathfrak D_n$, we have $\sigma \ne \sigma^{-1}$ and thus there exists a subset $\mathfrak U \subset \mathfrak D_n$ such that
    \[
        \mathfrak D_n = \mathfrak U \sqcup \{ \sigma^{-1} \; | \; \sigma \in \mathfrak U \}.
    \]
    Therefore, we can rewrite \eqref{detA1} as
    \begin{align*}
        \det( A )
        &= \sum_{\sigma \in \mathfrak U} \operatorname{sgn}(\sigma) \left( \prod_{i=1}^n A_{i,\sigma(i)}
        + \prod_{i=1}^n A_{\sigma(i),i} \right).
        %
    \end{align*}
    By Remark~\ref{rem:Aentries}, for each $\sigma \in \mathfrak U$, we have 
    \[
    \prod_{i=1}^n A_{\sigma(i),i} - (-1)^n\prod_{i=1}^n A_{i,\sigma(i)} \in (1-\zeta)\mathbb Z[\zeta].
    \]
    Since $n$ is odd, the $(-1)^n = -1$, whence we have the lemma.
\end{proof}

The statement of the next corollary follows immediately if one recalls the comments made after Lemma~\ref{lem:detIdealEasy}.

\begin{corollary}
     \label{cor:oddbi}
    Let $n, q \in \mathbb N$ with $q > 2$, $H \in \mathcal H_n(q)$, and $A = (J-H)/(1-\zeta)$.      
    Write $\operatorname{Char}_{A}(x)=\sum_{i=0}^n b_i x^{n-i}$.
    Then $b_i \in (1-\zeta) \mathbb Z[\zeta]$ for all  
    odd $i \in \{1,\dots,n\}$.
\end{corollary}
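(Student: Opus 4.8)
The plan is to deduce Corollary~\ref{cor:oddbi} from Lemma~\ref{lem:detISMnodd} in the same way that Corollary~\ref{cor:detIdealEasy} was deduced from Lemma~\ref{lem:detIdealEasy}: the coefficient $(-1)^i b_i$ of $\chi_A(x)$ equals the sum of the $i\times i$ principal minors of $A$. The point is that for \emph{odd} $i$, each such minor is the determinant of an $i\times i$ principal submatrix of $A$, and every principal submatrix of $A$ is itself of the form $(J - H')/(1-\zeta)$ for some $H' \in \mathcal H_i(q)$ (obtained by deleting the corresponding rows and columns of $H$).

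First I would make the reduction explicit: for a subset $T \subseteq \{1,\dots,n\}$ with $|T| = i$, let $H_T$ be the principal submatrix of $H$ on rows and columns indexed by $T$; then $H_T \in \mathcal H_i(q)$ and the corresponding principal submatrix $A_T$ of $A$ equals $(J - H_T)/(1-\zeta)$. Second, since $i$ is odd, Lemma~\ref{lem:detISMnodd} applies to $A_T$ and gives $\det(A_T) \in (1-\zeta)\mathbb Z[\zeta]$. Third, summing over all $\binom{n}{i}$ such subsets $T$ and using that the coefficients of the characteristic polynomial are (up to sign) the elementary symmetric functions of the eigenvalues, equivalently the sums of principal minors, yields
\[
(-1)^i b_i = \sum_{\substack{T \subseteq \{1,\dots,n\} \\ |T| = i}} \det(A_T) \in (1-\zeta)\mathbb Z[\zeta],
\]
and since $(1-\zeta)\mathbb Z[\zeta]$ is an ideal, $b_i \in (1-\zeta)\mathbb Z[\zeta]$ as claimed.

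There is essentially no obstacle here — the statement is a genuinely immediate corollary, exactly as the paper indicates. The only point worth a sentence of care is confirming that $A_T = (J - H_T)/(1-\zeta)$, i.e.\ that taking a principal submatrix commutes with the affine operation $X \mapsto (J - X)/(1-\zeta)$, which is clear because $J$, scalar multiplication, and subtraction all act entrywise. One should also note $i=n$ is allowed when $n$ is odd, in which case the single minor is $\det(A)$ itself and Lemma~\ref{lem:detISMnodd} is used directly. No other ingredients are needed.
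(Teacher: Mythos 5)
Your proof is correct and follows exactly the route the paper intends: the paper labels the corollary ``immediate'' from Lemma~\ref{lem:detISMnodd}, with the implicit argument being precisely the expansion of $b_i$ as $(-1)^i$ times the sum of $i\times i$ principal minors, each of which is $\det\bigl((J-H_T)/(1-\zeta)\bigr)$ for some $H_T \in \mathcal H_i(q)$ with $i$ odd. Your write-up merely makes explicit the (correct) observation that taking principal submatrices commutes with $X \mapsto (J-X)/(1-\zeta)$.
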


Next we show that the determinant of a matrix in $\mathcal H_n(q)$ lies in an ideal generated by a large power of $(1-\zeta)$.

\begin{lemma}
\label{lem:detIdeal}
     Let $n, q \in \mathbb N$ with $q > 2$ and $H \in \mathcal H_n(q)$.      
    Then $\det (H) \in (1-\zeta)^{n-1}\mathbb Z[\zeta]$.
    Furthermore, if $n$ is even then $\det( H) \in (1-\zeta)^{n}\mathbb Z[\zeta]$.
\end{lemma}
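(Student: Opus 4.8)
The plan is to mimic the proof of Lemma~\ref{lem:detISMnodd}, but to push the estimates further by exploiting the structure of $H$ (off-diagonal entries are units, diagonal entries are units) rather than of $A = (J-H)/(1-\zeta)$. First I would write out Leibniz's formula
\[
    \det(H) = \sum_{\sigma \in \mathfrak S_n} \operatorname{sgn}(\sigma) \prod_{i=1}^n H_{i,\sigma(i)},
\]
and observe that each summand is a unit in $\mathbb Z[\zeta]$ (a product of elements of $\mathcal C_q$). The idea is to reduce $\det(H)$ modulo $(1-\zeta)^{n-1}\mathbb Z[\zeta]$ by grouping the permutations cleverly, using that $\zeta \equiv 1 \pmod{(1-\zeta)}$ and that $\zeta$ has multiplicative order a power of $2$, so that $\zeta^{2^j}$ is congruent to $1$ modulo higher powers of $(1-\zeta)$.

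A cleaner route, which I would try first, is to avoid Leibniz entirely and argue by row operations as in Lemma~\ref{lem:detIdealEasy}. Since every off-diagonal entry of $H$ lies in $\mathcal C_q$, for $i \geqslant 2$ the entry $H_{i,1} = \zeta^{a_i}$ for some $a_i$. Replace row $i$ by row $i$ minus $\zeta^{a_i}\cdot(\text{row }1)$ times an appropriate correction; more precisely, since $H_{1,1}$ is a unit, one can clear the first column below the diagonal, and each resulting entry is a difference of two elements of $\mathcal C_q$, hence lies in $(1-\zeta)\mathbb Z[\zeta]$ (because $\zeta^a - \zeta^b = \zeta^b(\zeta^{a-b}-1) \in (1-\zeta)\mathbb Z[\zeta]$). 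Iterating this Gaussian-elimination argument down the matrix, after clearing column $k$ the submatrix in rows and columns $\{k+1,\dots,n\}$ has all entries in $(1-\zeta)\mathbb Z[\zeta]$; one has to be careful that the pivot entries stay units (or at least nonzero modulo $(1-\zeta)$) so the elimination is legitimate over the localisation, and that the cofactor expansion then extracts one factor of $(1-\zeta)$ per row below the first, giving $\det(H) \in (1-\zeta)^{n-1}\mathbb Z[\zeta]$. This is the main obstacle: controlling the pivots and making the "all entries divisible by $(1-\zeta)$" claim stable under the elimination, since after the first step the matrix no longer has entries in $\mathcal C_q$. I expect one can instead phrase this intrinsically: pick the first column, for each $i$ write $H_{i,1} = \zeta^{c_i}$ with $c_1 = 0$ say (after scaling), subtract row $1$ from every other row, and note that column $1$ of the new matrix is $(1, H_{2,1}-1, \dots, H_{n,1}-1)^\top$ with all entries after the first in $(1-\zeta)\mathbb Z[\zeta]$; but one needs all columns, not just the first, to behave — so the correct move is to also subtract suitable $\mathcal C_q$-multiples, exactly as the Leibniz grouping would suggest. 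I will likely settle on the Leibniz version with a pairing argument.

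For the Leibniz version: decompose $\mathfrak S_n = \bigsqcup_c \mathfrak S_n(c)$ according to the cycle type $c$, or better, set up a pairing. Given $\sigma$, consider modifying it by composing with a transposition; if $\sigma$ and $\sigma' = \sigma \tau$ differ by a transposition $\tau = (j\,k)$, then $\prod_i H_{i,\sigma(i)}$ and $\prod_i H_{i,\sigma'(i)}$ differ by a unit factor, and $\operatorname{sgn}$ flips, so their sum is $\prod_{i \neq j,k} H_{i,\sigma(i)} \cdot (H_{j,\sigma(j)}H_{k,\sigma(k)} - H_{j,\sigma(k)}H_{k,\sigma(j)})$, a product of $n-2$ units times a difference of two units, hence in $(1-\zeta)\mathbb Z[\zeta]$. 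Partitioning $\mathfrak S_n$ into $n!/2$ such pairs immediately gives $\det(H) \in (1-\zeta)\mathbb Z[\zeta]$; to reach the $(n-1)$st power one iterates this "second difference" idea — a telescoping/induction on $n$ via expansion along a row, where the $n\times n$ determinant is a $\mathcal C_q$-combination of $(n-1)\times(n-1)$ minors of the same type, and differences of such combinations pick up one more factor of $(1-\zeta)$ at each stage, using at the end that $n$ such minors (one per column entry in the expansion row) satisfy the relation $\sum_j H_{1,j}\det(H[(1,j)]) \cdot(\pm) = \det H$ together with an analogue of Lemma~\ref{lem:sumprincipalminor}. For the "furthermore" clause, when $n$ is even I would use that in Leibniz's formula the involution $\sigma \mapsto \sigma^{-1}$ has no fixed-point-free part obstruction: pairing $\sigma$ with $\sigma^{-1}$ gives $\prod_i H_{i,\sigma(i)} + \prod_i H_{\sigma(i),i} = \prod_i H_{i,\sigma(i)} + \zeta^{?}\prod_i \overline{H_{i,\sigma(i)}}$, and since $n$ is even the global sign $\zeta^{-n}$ issue from Lemma~\ref{lem:detISMnodd} disappears, yielding one extra factor of $(1-\zeta)$ exactly as the sum $z + \bar z$ (for $z$ a product of units) lies in $(1-\zeta)\mathbb Z[\zeta]$ — combined with the count of fixed points of this involution being controlled because, for $n$ even, the symmetric permutations $\sigma = \sigma^{-1}$ are products of transpositions and fixed points, on which the contribution is real and again handled by the diagonal-entry reduction. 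The hard part throughout is bookkeeping the exact power of $(1-\zeta)$; I would organise it as an induction on $n$, expanding along the first row and invoking the already-proven odd case (Corollary~\ref{cor:oddbi}) and the trace/adjugate identity from Lemma~\ref{lem:sumprincipalminor}'s proof to get the extra factor when $n$ is even.
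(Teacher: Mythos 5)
Your instinct that the row-operation argument of Lemma~\ref{lem:detIdealEasy} carries over is exactly right, and the difficulty you then talk yourself into is not real: after subtracting row $1$ from row $i$, the $(i,j)$ entry of the new matrix is $H_{i,j}-H_{1,j}$, a difference of two elements of $\mathcal C_q$ (the diagonal entries of $H$ are also in $\mathcal C_q$, being real roots of unity), hence lies in $(1-\zeta)\mathbb Z[\zeta]$ for \emph{every} column $j$, not just the first. No pivots, no Gaussian elimination, and no localisation are needed; multilinearity of the determinant in the $n-1$ modified rows immediately yields $\det(H)\in(1-\zeta)^{n-1}\mathbb Z[\zeta]$, which is precisely the paper's proof of the first assertion. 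By abandoning this for an unexecuted Leibniz ``second difference'' iteration, you leave the first claim without a complete argument.

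The even case contains a genuine error. Your pairing $\sigma\leftrightarrow\sigma^{-1}$ applied directly to $H$ produces pair sums of the form $z+\bar z$ with $z=\pm\zeta^a$, and $\zeta^a+\zeta^{-a}\equiv 2 \pmod{(1-\zeta)\mathbb Z[\zeta]}$. When $q$ is an odd prime power, $2$ is a unit modulo the prime ideal $(1-\zeta)\mathbb Z[\zeta]$, so $z+\bar z\notin(1-\zeta)\mathbb Z[\zeta]$ and the claimed extra factor does not appear; the lemma, however, is asserted for all $q>2$. The reason the analogous pairing works in Lemma~\ref{lem:detISMnodd} is that it is applied to $A=(J-H)/(1-\zeta)$, whose entries satisfy $A_{j,i}=-\zeta^{-1}\overline{A_{i,j}}$, so the pair sum becomes a \emph{difference} $\prod_i A_{i,\sigma(i)}-\zeta^{-n}\prod_i\overline{A_{i,\sigma(i)}}$, and $x-\bar x\in(1-\zeta)\mathbb Z[\zeta]$ holds for all $q$. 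The paper's actual route for even $n$ is to negate and diagonally conjugate $H$ so that its first row and column are all ones, subtract row $1$ from the other rows so that the first column becomes a standard basis vector, and observe that the surviving cofactor is $\pm\det(J-H[1])=\pm(1-\zeta)^{n-1}\det A$ with $H[1]$ of odd order $n-1$; Lemma~\ref{lem:detISMnodd} applied to this $A$ then supplies the $n$th factor of $(1-\zeta)$. That reduction to the odd-order case is the missing idea in your proposal.
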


\begin{proof}
    Subtract the first row of $H$ from all other rows to obtain $H^\prime$.
    Except for the first row, each entry of $H^\prime$ is in $(1-\zeta)\mathbb Z[\zeta]$.
    Thus $\det (H) = \det (H^\prime) \in  (1-\zeta)^{n-1}\mathbb Z[\zeta]$.
    
    Now suppose $n$ is even.
    By taking the negative of $H$, if necessary, we may assume that $H_{1,1} = 1$.
    Conjugating by a diagonal matrix with entries in $\mathcal C_q$, if necessary, we may further assume that all entries in the first row and column of $H$ equal $1$.
    Then, except for $H^\prime_{1,1} = 1$, each entry of the first column of $H^\prime$ must equal $0$.
    Hence $\det(H^\prime) = \det(H^\prime[1])$.
    Observe that $H^\prime[1] = J-H[1] = (1-\zeta)A$ for some matrix $A$ of order $n-1$ whose entries belong to $\mathbb Z[\zeta]$.
    By Lemma~\ref{lem:detISMnodd}, the determinant of $A$ is in $(1-\zeta)\mathbb Z[\zeta]$.
    Thus, the lemma follows.
\end{proof}

Since they are Hermitian, each $H \in \mathcal H_n(q)$ must have $\det (H) \in \mathbb R$.  
The next corollary follows immediately from Lemma~\ref{lem:detIdeal} and Proposition~\ref{pro:realIdeal}.

\begin{corollary}
    \label{cor:detH}
    Let $n, q \in \mathbb N$ with $q > 2$, and let $H \in \mathcal H_n(q)$.
    Then $\det (H) \in \rho^{\lfloor n/2 \rfloor}\mathbb Z[\zeta+\zeta^{-1}]$.
\end{corollary}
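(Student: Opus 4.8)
The plan is to start from the divisibility supplied by Lemma~\ref{lem:detIdeal}, repackage it as a power of $\rho$, and then use that $\det(H)$ is a real algebraic integer to descend from $\mathbb Z[\zeta]$ to $\mathbb Z[\zeta+\zeta^{-1}]$. Write $k := \lfloor n/2\rfloor$. Lemma~\ref{lem:detIdeal} gives, uniformly in the parity of $n$, the relation $\det(H) \in (1-\zeta)^{2k}\mathbb Z[\zeta]$: when $n$ is odd I would apply the first assertion of that lemma together with $n-1 = 2k$, and when $n$ is even the second assertion together with $n = 2k$.

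Next I would observe that $\rho$ and $(1-\zeta)^2$ generate the same ideal of $\mathbb Z[\zeta]$. Indeed $1-\zeta^{-1} = -\zeta^{-1}(1-\zeta)$ and $\zeta^{-1}$ is a unit of $\mathbb Z[\zeta]$, so $\rho = (1-\zeta)(1-\zeta^{-1})$ is a unit multiple of $(1-\zeta)^2$; hence $(1-\zeta)^{2k}\mathbb Z[\zeta] = \rho^k\mathbb Z[\zeta]$ and we may write $\det(H) = \rho^k\gamma$ with $\gamma \in \mathbb Z[\zeta]$. Since $H$ is Hermitean, $\det(H) \in \mathbb R$, and since $q \geqslant 2$ we have $\zeta \ne 1$, so $\rho = |1-\zeta|^2$ is a positive real number; therefore $\gamma = \det(H)/\rho^k$ is real, and $\gamma \in \mathbb Z[\zeta] \cap \mathbb R = \mathbb Z[\zeta+\zeta^{-1}]$, the latter being the ring of integers of the maximal real subfield of $\mathbb Q(\zeta)$ (cf.\ \cite[Proposition~2.16]{Washington97}, as used in the proof of Proposition~\ref{pro:realIdeal}). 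Thus $\det(H) \in \rho^k\mathbb Z[\zeta+\zeta^{-1}]$, which is the claim.

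I do not anticipate a genuine obstacle: the corollary really is an immediate consequence of Lemma~\ref{lem:detIdeal} and the arithmetic of $\mathbb Z[\zeta]$, and the only points that need a moment's care are the parity bookkeeping that makes the exponent come out to exactly $2\lfloor n/2\rfloor$ and the observation that $\rho$ is a unit times $(1-\zeta)^2$. If one prefers to invoke Proposition~\ref{pro:realIdeal} explicitly rather than the bare fact $\mathbb Z[\zeta]\cap\mathbb R = \mathbb Z[\zeta+\zeta^{-1}]$, the same conclusion follows by a short induction on $k$: any real element of $\rho^k\mathbb Z[\zeta] \subseteq (1-\zeta)\mathbb Z[\zeta]$ lies in $\rho\,\mathbb Z[\zeta+\zeta^{-1}]$ by Proposition~\ref{pro:realIdeal}, and cancelling one factor of $\rho$ inside the domain $\mathbb Z[\zeta]$ reduces to the case $k-1$.
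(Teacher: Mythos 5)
Your proposal is correct. The paper gives no written proof here; it simply asserts that the corollary ``follows immediately from Lemma~\ref{lem:detIdeal} and Proposition~\ref{pro:realIdeal}'', and your parity bookkeeping ($n-1=2\lfloor n/2\rfloor$ for $n$ odd via the first assertion of Lemma~\ref{lem:detIdeal}, $n=2\lfloor n/2\rfloor$ for $n$ even via the second) matches what the authors intend. Where you diverge is in the descent from $\mathbb Z[\zeta]$ to $\mathbb Z[\zeta+\zeta^{-1}]$: the paper's route iterates Proposition~\ref{pro:realIdeal}, which rests on $(1-\zeta)\mathbb Z[\zeta]$ being a prime ideal and hence is stated only for prime-power $q$ (the non-prime-power case being vacuous since $\rho$ is then a unit). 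Your primary argument instead factors $\det(H)=\rho^{k}\gamma$ directly, using that $\rho=-\zeta^{-1}(1-\zeta)^2$ is an associate of $(1-\zeta)^2$ and that $\mathbb Z[\zeta]\cap\mathbb R=\mathbb Z[\zeta+\zeta^{-1}]$ because the latter is the ring of integers of the maximal real subfield. This buys a uniform treatment of all $q>2$ in one stroke and avoids the induction on the exponent that the proposition-based route requires; the cost is negligible, and you correctly record the proposition-based induction as the alternative that tracks the paper's stated dependencies. Both arguments are sound.
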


The proof of the next lemma is a straightforward modification of the proof of \cite[Lemma 3.1]{GreavesYatsyna19}.

\begin{lemma}
     \label{lem:matdet}
    Let $A$ be a matrix of order $n$.
Write $\operatorname{Char}_{J-(1-\zeta)A}(x)=\sum_{i=0}^n a_ix^{n-i}$ and $\operatorname{Char}_A(x)=\sum_{i=0}^n b_ix^{n-i}$.
Then, for all $j \in \{0,\dots,n\}$,
$$a_j=(\zeta-1)^j \left( b_j + \frac{1}{1-\zeta} \sum_{i=1}^j b_{j-i} \mathbf{1}^\top A^{i-1}\mathbf{1} \right).$$
\end{lemma}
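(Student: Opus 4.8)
The plan is to apply the matrix determinant lemma iteratively to the matrix $xI - (J - (1-\zeta)A) = (xI + (1-\zeta)A) - J$, writing $J = \mathbf{1}\mathbf{1}^\top$ as a rank-one perturbation. First I would recall the matrix determinant lemma: for an invertible matrix $M$ and vectors $\mathbf u, \mathbf v$, $\det(M - \mathbf u \mathbf v^\top) = \det(M)(1 - \mathbf v^\top M^{-1} \mathbf u)$. Applying this with $M = xI + (1-\zeta)A$, $\mathbf u = \mathbf v = \mathbf 1$ gives
\[
\chi_{J-(1-\zeta)A}(x) = \det\bigl(xI + (1-\zeta)A\bigr)\Bigl(1 - \mathbf 1^\top (xI + (1-\zeta)A)^{-1}\mathbf 1\Bigr).
\]
Now $\det(xI + (1-\zeta)A) = \det\bigl((1-\zeta)(\tfrac{x}{1-\zeta}I + A)\bigr) = (1-\zeta)^n \chi_{-A}\!\bigl(\tfrac{x}{1-\zeta}\bigr)$, and since $\chi_A(x) = \sum_i b_i x^{n-i}$ we have $\chi_{-A}(y) = \sum_i (-1)^i b_i y^{n-i}$, so this determinant equals $\sum_{i=0}^n (-1)^i b_i (1-\zeta)^i x^{n-i} = \sum_{i=0}^n b_i (\zeta-1)^i x^{n-i}$. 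That already produces the ``$b_j(\zeta-1)^j$'' term; the remaining work is to expand the correction factor.

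The key step is handling $\mathbf 1^\top (xI + (1-\zeta)A)^{-1}\mathbf 1$. I would use the adjugate identity or, more cleanly, the Cayley–Hamilton / Neumann-type expansion of the resolvent: writing $(xI + (1-\zeta)A)^{-1} = \operatorname{adj}(xI+(1-\zeta)A)/\det(xI+(1-\zeta)A)$, and using the standard fact that $\operatorname{adj}(yI + B) = \sum_{k\geqslant 0} c_k(B)\, y^{\,\cdot}$-type polynomial whose entries, when sandwiched by $\mathbf 1^\top$ and $\mathbf 1$, produce sums $\mathbf 1^\top B^{i-1}\mathbf 1$. Concretely, from $(yI - B)\operatorname{adj}(yI-B) = \chi_B(y) I$ one gets the resolvent expansion $\operatorname{adj}(yI - B) = \sum_{i=1}^{n} \Bigl(\sum_{\ell=0}^{i-1} \beta_\ell B^{\,i-1-\ell}\Bigr) y^{n-i}$ where $\chi_B(y) = \sum_\ell \beta_\ell y^{n-\ell}$; substituting $B = -(1-\zeta)A$ and $y = x$, multiplying the whole displayed MDL identity through by $\det(xI + (1-\zeta)A)$ to clear denominators, and then collecting the coefficient of $x^{n-j}$ on both sides. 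The bookkeeping: $\det(xI+(1-\zeta)A) \cdot \mathbf 1^\top(xI+(1-\zeta)A)^{-1}\mathbf 1 = \mathbf 1^\top \operatorname{adj}(xI+(1-\zeta)A)\mathbf 1$, and the coefficient of $x^{n-j}$ in this is $\sum_{i=1}^{j} b_{j-i}(\zeta-1)^{j-i}\cdot (-(1-\zeta))^{i-1}\mathbf 1^\top A^{i-1}\mathbf 1 = (\zeta-1)^{j-1}\sum_{i=1}^j b_{j-i}\mathbf 1^\top A^{i-1}\mathbf 1$. Subtracting this from the coefficient $b_j(\zeta-1)^j$ of $x^{n-j}$ in $\det(xI+(1-\zeta)A)$ and factoring out $(\zeta-1)^j$ (noting $(\zeta-1)^{j-1} = (\zeta-1)^j/(\zeta-1) = -(\zeta-1)^j/(1-\zeta)$) yields exactly the claimed formula.

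The main obstacle I anticipate is purely the index-tracking in the resolvent expansion: getting the signs and the powers of $(1-\zeta)$ versus $(\zeta-1)$ to line up correctly when substituting $B = -(1-\zeta)A$, since each factor of $A$ that appears inside $\mathbf 1^\top A^{i-1}\mathbf 1$ drags along a $-(1-\zeta)$, while the $b_{j-i}$ coefficient drags along $(\zeta-1)^{j-i}$, and these must combine to a uniform $(\zeta-1)^j$ prefactor with a single compensating $\tfrac{1}{1-\zeta}$. An alternative, and perhaps cleaner, route that sidesteps the adjugate entirely is to apply the MDL to $xI + (1-\zeta)A - t\mathbf 1\mathbf 1^\top$ as a function of a formal parameter $t$, differentiate or expand to first order, and compare with $\chi_{J - (1-\zeta)A}$; or simply to cite \cite[Lemma 3.1]{GreavesYatsyna19} and observe that the present statement is the verbatim generalisation obtained by replacing the Seidel-type matrix there with the general $A$, since that proof uses only the rank-one structure of $J$.
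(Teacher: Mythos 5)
Your proof is correct and is essentially the paper's intended argument: the paper states this lemma only as ``a consequence of the matrix determinant lemma'' (citing \cite[Lemma 3.1]{GreavesYatsyna19}) and omits the details, which are precisely the rank-one perturbation of $xI+(1-\zeta)A$ by $J=\mathbf 1\mathbf 1^\top$ followed by the adjugate/resolvent expansion that you carry out. Your index bookkeeping checks out, including the final simplification $(\zeta-1)^{j-1}=-(\zeta-1)^{j}/(1-\zeta)$ that produces the stated prefactor.
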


In Lemma~\ref{lem:matdet}, we see summands of the form $\mathbf 1^\top A^k \mathbf 1$. 
In preparation for Lemma~\ref{lem:armod}, we can obtain restrictions on these summands, as shown in the next result.

\begin{lemma}
    \label{lem:2sumApower}
    Let $n \in \mathbb N$, $q > 2$ be a power of $2$, $H \in \mathcal H_n(q)$, and $A = (J-H)/(1-\zeta)$.
    Then $\mathbf 1^\top A^k \mathbf 1 \in (1-\zeta)\mathbb Z[\zeta]$ for all $k \in \mathbb N$.
\end{lemma}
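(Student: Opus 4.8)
The plan is to read $\mathbf 1^\top A^k\mathbf 1$ as a sum of walk-weights on the underlying graph $\Gamma=\Gamma(H)$, pair each walk with its reversal, and hand the unpaired (palindromic) walks off to Lemma~\ref{lem:trCong(1-zeta)}. By Remark~\ref{rem:weightPowers} with $l=1$, the $(i,j)$-entry of $A^k$ is $\sum_{\mathbf w}\operatorname{wt}(\mathbf w)$, the sum being over all $k$-walks $\mathbf w=(w_0,\dots,w_k)$ from $i$ to $j$; hence
$\mathbf 1^\top A^k\mathbf 1=\sum_{i,j}(A^k)_{i,j}=\sum_{\mathbf w}\operatorname{wt}(\mathbf w)$, where $\mathbf w$ now ranges over \emph{all} $k$-walks of $\Gamma$ (closed or not). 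I would split this sum according to whether $w_0=w_k$ or $w_0\ne w_k$.

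For the walks with $w_0=w_k$, that is, the closed $k$-walks, the partial sum is exactly $\operatorname{tr}(A^k)$, which lies in $(1-\zeta)\mathbb Z[\zeta]$ by Lemma~\ref{lem:trCong(1-zeta)}. For the walks with $w_0\ne w_k$, reversal $\mathbf w\mapsto\mathbf w^\top$ is a \emph{fixed-point-free} involution on this set (a reversed walk starts at $w_k\ne w_0$, so $\mathbf w^\top\ne\mathbf w$, and $\mathbf w^\top$ again has distinct endpoints), so these walks split into pairs $\{\mathbf w,\mathbf w^\top\}$. The arithmetic input I need is that $\operatorname{wt}(\mathbf w)+\operatorname{wt}(\mathbf w^\top)\in(1-\zeta)\mathbb Z[\zeta]$; this is the content underlying Remark~\ref{rem:weightsum}, and concretely it follows from the entrywise congruence $A_{u,v}\equiv A_{v,u}\pmod{(1-\zeta)\mathbb Z[\zeta]}$ together with $2\in(1-\zeta)\mathbb Z[\zeta]$, since then $\operatorname{wt}(\mathbf w^\top)=\prod_i A_{w_{i+1},w_i}\equiv\prod_i A_{w_i,w_{i+1}}=\operatorname{wt}(\mathbf w)$, so the pair-sum is $\equiv 2\operatorname{wt}(\mathbf w)\equiv 0$. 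The congruence $A_{u,v}\equiv A_{v,u}$ itself is immediate: writing $H_{u,v}=\zeta^m$ one has $A_{u,v}-A_{v,u}=(H_{v,u}-H_{u,v})/(1-\zeta)=\zeta^{-m}(1+\zeta^m)(1+\zeta+\dots+\zeta^{m-1})$, and $1+\zeta^m\equiv 2\equiv 0\pmod{(1-\zeta)\mathbb Z[\zeta]}$ because $q$ is a power of $2$ (the case $u=v$ is trivial, both entries being $0$ or $2/(1-\zeta)$).

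Summing over the pairs of non-closed walks gives a contribution in $(1-\zeta)\mathbb Z[\zeta]$, and adding the closed-walk contribution $\operatorname{tr}(A^k)$ gives $\mathbf 1^\top A^k\mathbf 1\in(1-\zeta)\mathbb Z[\zeta]$. I do not expect a serious obstacle; the one point that needs care is that reversal must be applied only to the non-closed walks, since on the full walk set it has fixed points (the palindromic walks, which are necessarily closed) and those are exactly what Lemma~\ref{lem:trCong(1-zeta)} absorbs. Equivalently, one can bypass walks entirely: the entrywise congruence $A\equiv A^\top\pmod{(1-\zeta)\mathbb Z[\zeta]}$ forces $A^k\equiv (A^k)^\top$ entrywise, hence $(A^k)_{i,j}+(A^k)_{j,i}\equiv 2(A^k)_{i,j}\equiv 0$, and $\mathbf 1^\top A^k\mathbf 1=\operatorname{tr}(A^k)+\sum_{i<j}\bigl((A^k)_{i,j}+(A^k)_{j,i}\bigr)$ then lies in $(1-\zeta)\mathbb Z[\zeta]$ by Lemma~\ref{lem:trCong(1-zeta)}.
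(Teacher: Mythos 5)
Your argument is correct and is essentially the paper's own proof: the paper simply observes that $\mathbf 1^\top A^k\mathbf 1-\operatorname{tr}(A^k)\in(1-\zeta)\mathbb Z[\zeta]$ (which is exactly your pairing of $(A^k)_{i,j}$ with $(A^k)_{j,i}$ via $A\equiv A^\top\pmod{(1-\zeta)\mathbb Z[\zeta]}$) and then invokes Lemma~\ref{lem:trCong(1-zeta)}. Your closing ``bypass'' paragraph is precisely that two-line proof, with the entrywise congruence verified in more detail than the paper bothers to give.
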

\begin{proof}
    First, since off-diagonal entries are counted twice and $2 \in (1-\zeta)\mathbb Z[\zeta]$ (by Proposition~\ref{pro:idealprelim} (f)), observe that 
    \[
    \mathbf 1^\top A^k \mathbf 1 - \operatorname{tr}(A^k) \in (1-\zeta)\mathbb Z[\zeta].
    \]
    Then the lemma follows from Lemma~\ref{lem:trCong}.
\end{proof}

Lemma~\ref{lem:matdet} and Lemma~\ref{lem:2sumApower} are the main tools used to obtain the following lemma about the coefficients of the characteristic polynomial of $H \in \mathcal H_n(q)$.

\begin{lemma}
    \label{lem:armod}
    Let $n \in \mathbb N$, $q > 2$ be a power of $2$, $H \in \mathcal H_n(q)$, and write $\operatorname{Char}_{H}(x)=\sum_{i=0}^n a_ix^{n-i}$.
    \begin{itemize}
        \item If $n$ is even then $a_j \in (1-\zeta)^j \mathbb Z[\zeta]$ for each $j \in \{0,\dots,n\}$.
        \item If $j  \in \{0,\dots,n\}$ is even then  $a_j \in (1-\zeta)^j \mathbb Z[\zeta]$.
    \end{itemize}
\end{lemma}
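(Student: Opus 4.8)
The plan is to combine Lemma~\ref{lem:matdet} with the congruence from Lemma~\ref{lem:2sumApower} and the parity-of-coefficients information from Corollary~\ref{cor:oddbi}, arguing by induction on $j$. Write $A = (J-H)/(1-\zeta)$ and $\chi_A(x) = \sum_{i=0}^n b_i x^{n-i}$. By Lemma~\ref{lem:matdet}, the coefficient $a_j$ of $\chi_H(x) = \chi_{J-(1-\zeta)A}(x)$ satisfies
\[
    a_j = (\zeta-1)^j\left( b_j + \frac{1}{1-\zeta}\sum_{i=1}^j b_{j-i}\,\mathbf 1^\top A^{i-1}\mathbf 1 \right).
\]
So to show $a_j \in (1-\zeta)^j\mathbb Z[\zeta]$ it suffices to show that the bracketed quantity lies in $\mathbb Z[\zeta]$, i.e.\ that $\sum_{i=1}^j b_{j-i}\,\mathbf 1^\top A^{i-1}\mathbf 1 \in (1-\zeta)\mathbb Z[\zeta]$. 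Here the $i=1$ summand is $b_{j-1}\cdot n$ and the summands with $i \geqslant 2$ have $\mathbf 1^\top A^{i-1}\mathbf 1 \in (1-\zeta)\mathbb Z[\zeta]$ by Lemma~\ref{lem:2sumApower}. So everything reduces to controlling the single term $n\,b_{j-1}$ modulo $(1-\zeta)$.

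For the second bullet ($j$ even, $n$ arbitrary): when $j$ is even, $j-1$ is odd, so $b_{j-1} \in (1-\zeta)\mathbb Z[\zeta]$ by Corollary~\ref{cor:oddbi}, hence $n\,b_{j-1} \in (1-\zeta)\mathbb Z[\zeta]$ and the bracket is in $\mathbb Z[\zeta]$, giving $a_j \in (1-\zeta)^j\mathbb Z[\zeta]$ directly. For the first bullet ($n$ even), I would induct on $j$. The base cases $j=0,1$ are clear ($a_0 = 1$, and $a_1 = -\operatorname{tr}(H)$ which — writing $H = J - (1-\zeta)A$ — equals $-n + (1-\zeta)\operatorname{tr}(A)$, divisible by $(1-\zeta)$ since $n$ is even). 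For the inductive step with $j\geqslant 2$: if $j$ is even we are already done by the second bullet; if $j$ is odd, then $j-1$ is even and $j - 1 < j$, so by the inductive hypothesis (or rather by the already-established even-index case) $a_{j-1} \in (1-\zeta)^{j-1}\mathbb Z[\zeta]$. I would then relate $b_{j-1}$ back to $a_{j-1}$ using Lemma~\ref{lem:matdet} at index $j-1$: since $a_{j-1} = (\zeta-1)^{j-1}(b_{j-1} + (1-\zeta)^{-1}\sum_{i=1}^{j-1} b_{j-1-i}\mathbf 1^\top A^{i-1}\mathbf 1)$, the hypothesis $a_{j-1}\in(1-\zeta)^{j-1}\mathbb Z[\zeta]$ forces $b_{j-1} + (1-\zeta)^{-1}\sum \cdots \in (1-\zeta)\mathbb Z[\zeta]$; combined with Lemma~\ref{lem:2sumApower} handling the $i\geqslant 2$ terms and the fact that the $i=1$ term there is $n\,b_{j-2}$ with $n$ even, one extracts $b_{j-1} \in (1-\zeta)\mathbb Z[\zeta]$. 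Feeding this back into the formula for $a_j$ then closes the induction.

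I expect the bookkeeping around which congruences to invoke at which index to be the main obstacle — in particular, making sure the induction is set up so that the "$n$ even, $j$ odd" case can legitimately appeal to the "$j$ even" case at the smaller index $j-1$, and tracking the factors of $n$ that appear precisely in the $i=1$ summand of each instance of Lemma~\ref{lem:matdet}. A cleaner alternative, which I would try first, is to prove by simultaneous induction on $j$ the combined statement ``$a_j \in (1-\zeta)^j\mathbb Z[\zeta]$ whenever $j$ is even or $n$ is even, \emph{and} $b_j \in (1-\zeta)\mathbb Z[\zeta]$ whenever $j$ is odd or $n$ is even'' — the two halves feed each other through Lemma~\ref{lem:matdet}, and the only external inputs needed are Corollary~\ref{cor:oddbi} (for the odd-$j$ part of the $b$-statement, independent of parity of $n$), Lemma~\ref{lem:2sumApower} (to discard the $i\geqslant 2$ tail), and the observation that $n$ even kills the leftover $n\,b_{j-1}$ term. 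This packaging makes the parity case analysis essentially automatic and avoids re-deriving relations between $a_{j-1}$ and $b_{j-1}$ by hand.
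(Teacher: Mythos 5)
Your treatment of the second bullet is exactly the paper's: isolate the $i=1$ summand $n\,b_{j-1}$ in Lemma~\ref{lem:matdet}, kill the $i\geqslant 2$ tail with Lemma~\ref{lem:2sumApower}, and use Corollary~\ref{cor:oddbi} on the odd index $j-1$. The problem is your handling of the first bullet. The entire induction is aimed at proving $b_{j-1}\in(1-\zeta)\mathbb Z[\zeta]$ for $j-1$ even when $n$ is even, and the extraction step you describe is invalid: from $a_{j-1}\in(1-\zeta)^{j-1}\mathbb Z[\zeta]$ and the identity $a_{j-1}=(\zeta-1)^{j-1}\bigl(b_{j-1}+(1-\zeta)^{-1}\sum_{i}b_{j-1-i}\mathbf 1^\top A^{i-1}\mathbf 1\bigr)$ you may only conclude that the bracket lies in $\mathbb Z[\zeta]$, not in $(1-\zeta)\mathbb Z[\zeta]$; you would need $a_{j-1}\in(1-\zeta)^{j}\mathbb Z[\zeta]$ for the latter. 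Worse, the auxiliary statement in your ``cleaner alternative'' (that $b_j\in(1-\zeta)\mathbb Z[\zeta]$ whenever $n$ is even) is simply false: for $q=4$, $\zeta=i$, $n=2$ and $H=\left(\begin{smallmatrix}1& i\\ -i&1\end{smallmatrix}\right)$ one gets $A=\left(\begin{smallmatrix}0&1\\ i&0\end{smallmatrix}\right)$ and $b_2=-i$, a unit, even though $a_2=0$. So the simultaneous induction cannot be set up as stated.

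The fix is a one-line observation that makes the first bullet non-inductive, and it is what the paper does: since $q$ is a power of $2$, we have $2\in(1-\zeta)\mathbb Z[\zeta]$, so $n$ even implies $n\in(1-\zeta)\mathbb Z[\zeta]$. Hence the troublesome summand $(\zeta-1)^{j-1}n\,b_{j-1}$ already lies in $(1-\zeta)^{j}\mathbb Z[\zeta]$ with no information about $b_{j-1}$ needed, and together with Lemma~\ref{lem:2sumApower} for the $i\geqslant 2$ terms this gives $a_j\in(1-\zeta)^j\mathbb Z[\zeta]$ for every $j$ at once. You correctly reduced everything to the term $n\,b_{j-1}$ but then put the divisibility burden on the wrong factor.
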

\begin{proof}
    The proof is trivial for $j = 0$.
    Assume $j \in \{1,\dots,n\}$.
    First, suppose $n$ is even.
    By Lemma~\ref{lem:matdet}, we can write
    $$a_j=(\zeta-1)^{j}b_{j} -(\zeta-1)^{j-1}nb_{j-1} - (\zeta -1)^{j-1}\sum_{i=2}^{j} b_{j-i} \mathbf{1}^\top A^{i-1}\mathbf{1}.$$
    Note that $n \in (1-\zeta)\mathbb Z[\zeta]$.
    By Lemma~\ref{lem:2sumApower}, we have 
    $\mathbf{1}^\top A^{i-1}\mathbf{1} \in (1-\zeta) \mathbb Z[\zeta]$ for all $i \geqslant 2$.
    Thus, $a_j \in (1-\zeta)^j \mathbb Z[\zeta]$ as required.
    
    Now suppose $j$ is even and write $j=2k$ for some positive integer $k$.
    By Lemma~\ref{lem:matdet}, we have
    \begin{align}
    \label{eqn:a2k}
    a_{2k} &=(\zeta-1)^{2k}b_{2k} +(1-\zeta)^{2k-1}b_{2k-1}n + (1-\zeta)^{2k-1}\sum_{i=2}^{2k} b_{2k-i} \mathbf{1}^\top A^{i-1}\mathbf{1}.
    \end{align}
    As $2k-1$ is odd, we use Corollary~\ref{cor:oddbi} to give $b_{2k-1} \in (1-\zeta)\mathbb Z[\zeta]$.
    Using Lemma~\ref{lem:2sumApower}, for each $i \geqslant 2$, we have  $\mathbf{1}^\top A^{i-1}\mathbf{1} \in (1-\zeta) \mathbb Z[\zeta]$.
    Thus, from \eqref{eqn:a2k}, we obtain $a_{2k} \in (1-\zeta)^{2k} \mathbb Z[\zeta]$ as required.
\end{proof}

Again, using the fact that each $H \in \mathcal H_n(q)$ is Hermitian, we have the following corollary, which is a consequence of Corollary~\ref{cor:detH}, and Lemma~\ref{lem:armod} together with Proposition~\ref{pro:realIdeal}. 

\begin{corollary}
    \label{cor:2powerCoeffs}
    Let $n \in \mathbb N$, $q > 2$, $H \in \mathcal H_n(q)$, and write $\operatorname{Char}_{H}(x)=\sum_{i=0}^n a_ix^{n-i}$.
        \begin{itemize}
    \item Suppose $q$ is not a power of $2$.
    Then $a_i \in \rho^{\lceil (i-1)/2 \rceil}\mathbb Z[\zeta+\zeta^{-1}]$ for all $i$.
    
    \item Suppose $q$ is a power of $2$.
    If $n$ is even then $a_i \in \rho^{\lceil i/2 \rceil}\mathbb Z[\zeta+\zeta^{-1}]$ for all $i$, otherwise $a_i \in \rho^{\lfloor i/2 \rfloor}\mathbb Z[\zeta+\zeta^{-1}]$ for all $i$.
    \end{itemize}
\end{corollary}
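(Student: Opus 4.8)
The plan is to express each coefficient $a_i$ as the signed sum of the order-$i$ principal minors of $H$, observe that each such minor is the determinant of a matrix in $\mathcal H_i(q)$, and then feed these into the determinant estimates already established. Concretely, writing the coefficients of $\chi_H$ as sums of principal minors (as in Corollary~\ref{cor:detIdealEasy}), we have $a_i = (-1)^i\sum_{|S|=i}\det(H[S])$, where the sum runs over all $i$-element subsets $S\subseteq\{1,\dots,n\}$ and $H[S]$ is the corresponding principal submatrix, which again belongs to $\mathcal H_i(q)$. Moreover, since $H$ is Hermitean its characteristic polynomial has real coefficients, so each $a_i$ lies in $\mathbb Z[\zeta]\cap\mathbb R=\mathbb Z[\zeta+\zeta^{-1}]$.

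The one auxiliary fact I would record first is the exponent-bookkeeping version of Proposition~\ref{pro:realIdeal}: for every integer $m\geqslant 0$,
\[
(1-\zeta)^m\mathbb Z[\zeta]\cap\mathbb R=\rho^{\lceil m/2\rceil}\mathbb Z[\zeta+\zeta^{-1}].
\]
The inclusion $\supseteq$ is immediate, since $\rho=-\zeta^{-1}(1-\zeta)^2$ gives $\rho\mathbb Z[\zeta]=(1-\zeta)^2\mathbb Z[\zeta]$ and $2\lceil m/2\rceil\geqslant m$, while $\rho,\zeta+\zeta^{-1}\in\mathbb R$. For $\subseteq$ I would induct on $m$: the cases $m=0,1$ are trivial and Proposition~\ref{pro:realIdeal} respectively; for $m\geqslant 2$, given a real $\alpha\in(1-\zeta)^m\mathbb Z[\zeta]$, apply the case $m=1$ to write $\alpha=\rho\beta$ with $\beta\in\mathbb Z[\zeta+\zeta^{-1}]$ (and $\beta$ real), then cancel $\rho=(1-\zeta)^2\cdot(\text{unit})$ using the prime ideal property of $(1-\zeta)\mathbb Z[\zeta]$ to get $\beta\in(1-\zeta)^{m-2}\mathbb Z[\zeta]\cap\mathbb R$, and apply the inductive hypothesis.

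With this in hand the three cases are short. If $q$ is not a power of $2$, apply Corollary~\ref{cor:detH} to each $H[S]$: as $H[S]\in\mathcal H_i(q)$ we get $\det(H[S])\in\rho^{\lfloor i/2\rfloor}\mathbb Z[\zeta+\zeta^{-1}]$, hence $a_i\in\rho^{\lfloor i/2\rfloor}\mathbb Z[\zeta+\zeta^{-1}]=\rho^{\lceil(i-1)/2\rceil}\mathbb Z[\zeta+\zeta^{-1}]$, the two exponents being equal for every integer $i$. If $q$ is a power of $2$ and $n$ is odd, the same application of Corollary~\ref{cor:detH} to the order-$i$ principal submatrices gives $a_i\in\rho^{\lfloor i/2\rfloor}\mathbb Z[\zeta+\zeta^{-1}]$ directly (for even $i$ one may alternatively invoke Lemma~\ref{lem:armod} and the displayed identity, obtaining the same bound). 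Finally, if $q$ is a power of $2$ and $n$ is even, Lemma~\ref{lem:armod} gives $a_i\in(1-\zeta)^i\mathbb Z[\zeta]$ for every $i$, and combining this with $a_i\in\mathbb R$ and the displayed identity (with $m=i$) yields $a_i\in\rho^{\lceil i/2\rceil}\mathbb Z[\zeta+\zeta^{-1}]$.

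I expect the only non-formal ingredient to be the generalised identity for $(1-\zeta)^m\mathbb Z[\zeta]\cap\mathbb R$; everything else is direct substitution into Corollary~\ref{cor:detH} and Lemma~\ref{lem:armod} combined with elementary floor/ceiling arithmetic. The point to be careful about is that these two results are applied to the principal submatrices $H[S]$, whose orders $i$ need not have the same parity as $n$; tracking the parity per submatrix is precisely what forces the use of Lemma~\ref{lem:armod} (rather than Corollary~\ref{cor:detH} alone) in the case $q=2^f$ with $n$ even, where for odd $i$ Corollary~\ref{cor:detH} would only give the weaker exponent $\lfloor i/2\rfloor$ instead of $\lceil i/2\rceil$.
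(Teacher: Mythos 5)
Your proposal is correct and follows essentially the same route the paper intends: the paper derives this corollary from Corollary~\ref{cor:detH} (applied to principal submatrices, whose order $i$ need not share the parity of $n$), Lemma~\ref{lem:armod}, and Proposition~\ref{pro:realIdeal}, which is exactly your case split, and your generalised identity $(1-\zeta)^m\mathbb Z[\zeta]\cap\mathbb R=\rho^{\lceil m/2\rceil}\mathbb Z[\zeta+\zeta^{-1}]$ is the bookkeeping step the paper leaves implicit. Your observation about where Lemma~\ref{lem:armod} is genuinely needed (odd $i$ with $n$ even and $q=2^f$) matches the structure of the paper's argument.
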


Now we are ready to prove parts (c) and (d) of Theorem~\ref{thm:mainBounds}.

\begin{proof}[Proof of Theorem~\ref{thm:mainBounds} (c)]
    Let $H \in \mathcal H_n(p^f)$ with $p$ an odd prime, $f \geqslant 1$, and write $\operatorname{Char}_{H}(x)=\sum_{i=0}^n a_ix^{n-i}$.
    By Remark~\ref{rem:a0a1a2}, $a_0 = 1$, $a_1 = -n$, and $a_2 = 0$.
    
    By Corollary~\ref{cor:2powerCoeffs} and Remark~\ref{rem:countResidues}, there are at most $p^{e-\lceil (i-1)/2 \rceil}$ possible residues for $a_i$ modulo $\rho^e \mathbb Z[\zeta+\zeta^{-1}]$, where $i \in \{3,\dots, 2e-1\}$ and just one possible residue for $a_i$ modulo $\rho^e \mathbb Z[\zeta+\zeta^{-1}]$ for $i \geqslant 2e$.
    
    Thus there are at most
    \[
        \prod_{i=3}^{2e-1}p^{e-\lceil (i-1)/2 \rceil} = p^{(e-1)^2}
    \]
    possible residue classes for $\operatorname{Char}_H(x)$ modulo $(\rho^e \mathbb Z[\zeta+\zeta^{-1}])[x]$.
\end{proof}

The proof of Theorem~\ref{thm:mainBounds} (d) is similar to the proof of Theorem~\ref{thm:mainBounds} (c).
The difference is that for $q$ an odd prime power, the trace of $H \in \mathcal H_n(q)$ is equal to $n$, which gives $a_1=-n$.

\begin{proof}[Proof of Theorem~\ref{thm:mainBounds} (d)]
    Let $H \in \mathcal H_n(2^f)$ where $n$ is even and $f > 1$ and write $\operatorname{Char}_{H}(x)=\sum_{i=0}^n a_ix^{n-i}$.
    By Remark~\ref{rem:a0a1a2}, $a_0 = 1$ and $a_2$ depends on $a_1$.
    Since $n$ is even, $a_1$ must also be an even integer.
    Using Remark~\ref{rem:rationalElements}, modulo $\rho^e \mathbb Z[\zeta+\zeta^{-1}]$, there are at most $2^{\lceil 2^{2-f}e \rceil - 1}$ residues for $a_1$ modulo $\rho^e \mathbb Z[\zeta+\zeta^{-1}]$.
    
    By Corollary~\ref{cor:2powerCoeffs} and Remark~\ref{rem:countResidues}, there are at most $2^{e-\lceil i/2 \rceil}$ possible residues for $a_i$ modulo $\rho^e \mathbb Z[\zeta+\zeta^{-1}]$, where $i \in \{3,\dots, 2e-1\}$ and just one possible residue for $a_i$ modulo $\rho^e \mathbb Z[\zeta+\zeta^{-1}]$ for $i \geqslant 2e$.
    
    Thus there are at most
    \[
        2^{\lceil 2^{2-f}e \rceil - 1}\prod_{i=3}^{2e-1}2^{e-\lceil i/2 \rceil} = 2^{(e-1)(e-2)+\lceil 2^{2-f}e \rceil - 1}
    \]
    possible residue classes for $\operatorname{Char}_H(x)$ modulo $(\rho^e \mathbb Z[\zeta+\zeta^{-1}])[x]$.
\end{proof}

\begin{proof}[Proof of Theorem~\ref{thm:mainBounds2} (c) and (d)]
    Note that $|\mathcal X_n^\prime(q,e)|$ is equal to the number of congruence classes of $\operatorname{Char}_H(x)$ modulo $\rho^e\mathbb Z[\zeta+\zeta^{-1}][x]$ where $H \in \mathcal H_n(q)$ and the diagonal entries of $H$ are all equal to $1$.
    This means that the top three coefficients of $\operatorname{Char}_{H}(x)=\sum_{i=0}^n a_ix^{n-i}$ are $a_0 = 1$, $a_1 = -n$, and $a_2=0$.
    From this point, the proof is similar to the proof of the corresponding part of  Theorem~\ref{thm:mainBounds}.
\end{proof}

\section{Residue graphs, Euler graphs, and Seidel switching}
\label{sec:euler}

In this section, we develop the final tools required to prove part (e) of Theorem~\ref{thm:mainBounds} and Theorem~\ref{thm:mainBounds2} and this section culminates with those proofs.
Accordingly, fix $q$ to be a power of $2$.

Let $H \in \mathcal H_n(q)$ and let $A = (J-H)/(1-\zeta)$.
A graph $\Gamma$ is called an \textbf{Euler graph} if the degree of each of its vertices is even.
The \textbf{residue graph} $\Gamma^{\mathrm{res}}(H)$ of $H$ is defined to have vertex set $\{1,\dots,n\}$ where $i$ is adjacent to $j$ if and only if $A_{i,j} \not \in (1-\zeta)\mathbb Z[\zeta]$.
It is easy to see that, for all $i$ and $j$, we have $A_{i,j} \in (1-\zeta)\mathbb Z[\zeta]$ if and only if $A_{i,j} \in (1-\zeta)\mathbb Z[\zeta]$, thus the adjacency of $\Gamma^{\mathrm{res}}(H)$ is well-defined.
Note that the residue graph $\Gamma^{\mathrm{res}}(H)$ is an Euler graph if and only if the sum of each row of $A$ is in $(1-\zeta) \mathbb Z[\zeta]$.
Furthermore, note that if $q > 2$ then $\Gamma^{\mathrm{res}}(H)$ is a simple graph (this follows from Proposition~\ref{pro:idealprelim} (f)).

Define $\operatorname{Diag}_n(\mathcal C_q)$ to be the set of diagonal $\mathcal C_q$-matrices of order $n$. 
The \textbf{switching class} $\operatorname{sw}(H)$ of $H$ is defined to be the following set of residue graphs
\[
   \operatorname{sw}(H) := \left \{ \Gamma^{\mathrm{res}} \left ( D HD^{-1}\right ) \; | \; D \in \operatorname{Diag}_n(\mathcal C_q) \right \}.
\]
Let $\Gamma = \Gamma^{\mathrm{res}} (H)$ with vertex set $V$, which is partitioned into two subsets $U_1$ and
$U_2$. 
\textbf{Seidel switching} with respect to $\{U_1, U_2\}$ is the operation on $\Gamma$ that leaves $V$ and the subgraphs induced by $U_1$ and $U_2$ unchanged, but deletes all edges between $U_1$ and $U_2$,
and inserts all edges between $U_1$ and $U_2$ that were not present in $\Gamma$.
Observe that by Seidel switching $\Gamma$ with respect to $\{U_1, U_2\}$, we obtain the residue graph of $DHD^{-1}$ where each entry of $D \in \operatorname{Diag}_n(\mathcal C_q)$ corresponding to $U_1$ is equal to $\zeta$ and equal to $1$ otherwise. 
Furthermore, as a consequence of Proposition~\ref{pro:idealprelim} (g), each graph in $\operatorname{sw}(H)$ can be obtained by performing some Seidel switching on the graph $\Gamma^{\mathrm{res}} (H)$.

We begin with a result about the presence of an Euler graph in the switching class of $H$.
The following proposition is generalisation of Seidel's corresponding result for $2$-Seidel matrices~\cite{Seidelnodd}.
We omit the proof since it is essentially the same as the proof of \cite[Theorem 1]{hage03}. 

\begin{proposition}
\label{pro:uniqueEuler}
    Let $n \in \mathbb N$ be odd, $q > 2$ be a power of $2$, and $H \in \mathcal H_n(q)$.
    Then $\operatorname{sw}(H)$ contains precisely one Euler graph.
\end{proposition}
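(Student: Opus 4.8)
The plan is to reduce the statement to a linear-algebra fact over $\mathbb{F}_2$, exactly as in the classical Seidel-switching argument for $\{\pm 1\}$-matrices. Let $H \in \mathcal{H}_n(q)$ with $n$ odd and $q > 2$ a power of $2$, and set $A = (J-H)/(1-\zeta)$. By the remark preceding the proposition, $\Gamma^{\text{res}}(H)$ is an Euler graph if and only if every row sum of $A$ lies in $(1-\zeta)\mathbb{Z}[\zeta]$; equivalently, reducing modulo the prime ideal $(1-\zeta)\mathbb{Z}[\zeta]$ (whose residue field is $\mathbb{F}_2$, since $2 \in (1-\zeta)\mathbb{Z}[\zeta]$), we want the reduction $\bar{A}$ of $A$ over $\mathbb{F}_2$ to have all row sums zero. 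Seidel switching with respect to a partition $\{U_1,U_2\}$ corresponds to conjugating $H$ by $D \in \operatorname{Diag}_n(\mathcal{C}_q)$ with entries $\zeta$ on $U_1$ and $1$ elsewhere; I would first check how this affects $\bar{A}$. The effect is to flip the $(i,j)$ entry of $\bar{A}$ precisely when exactly one of $i,j$ lies in $U_1$, i.e. $\bar{A} \mapsto \bar{A} + (\mathbf{u}\mathbf{1}^\top + \mathbf{1}\mathbf{u}^\top)$ over $\mathbb{F}_2$, where $\mathbf{u}$ is the indicator vector of $U_1$ (the diagonal contribution cancels since $n$ is irrelevant there; one should verify the diagonal entries of $\bar A$ are unaffected, which follows from $\operatorname{wt}$ considerations and the fact that diagonal entries of $A$ lie in $(1-\zeta)\mathbb{Z}[\zeta]$ anyway).

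The core computation is then: the vector of row sums of $\bar{A}$ is $\bar{A}\mathbf{1}$, and after switching by $\mathbf{u}$ it becomes $\bar{A}\mathbf{1} + \mathbf{u}(\mathbf{1}^\top\mathbf{1}) + \mathbf{1}(\mathbf{u}^\top\mathbf{1}) = \bar{A}\mathbf{1} + \mathbf{u} + (\mathbf{1}^\top\mathbf{u})\mathbf{1}$ over $\mathbb{F}_2$, using $\mathbf{1}^\top\mathbf{1} = n \equiv 1 \pmod 2$ since $n$ is odd. So I want to solve, over $\mathbb{F}_2$,
\[
\mathbf{u} + (\mathbf{1}^\top\mathbf{u})\,\mathbf{1} = \bar{A}\mathbf{1}
\]
for $\mathbf{u}$, and show the solution is unique. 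Write $c = \mathbf{1}^\top\mathbf{u} \in \mathbb{F}_2$ and $\mathbf{v} = \bar{A}\mathbf{1}$. If $c = 0$ then $\mathbf{u} = \mathbf{v}$, which is consistent iff $\mathbf{1}^\top\mathbf{v} = 0$; if $c = 1$ then $\mathbf{u} = \mathbf{v} + \mathbf{1}$, consistent iff $\mathbf{1}^\top(\mathbf{v}+\mathbf{1}) = 1$, i.e. $\mathbf{1}^\top\mathbf{v} + n = 1$, i.e. $\mathbf{1}^\top\mathbf{v} = 0$ again (using $n$ odd). One checks that $\mathbf{1}^\top\mathbf{v} = \mathbf{1}^\top\bar{A}\mathbf{1} = 0$ automatically: this is the reduction mod $(1-\zeta)$ of $\mathbf{1}^\top A \mathbf{1}$, and I would invoke Lemma~\ref{lem:2sumApower} (with $k=1$) to get $\mathbf{1}^\top A\mathbf{1} \in (1-\zeta)\mathbb{Z}[\zeta]$. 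Hence exactly one of the two cases is realizable — precisely the case matching the parity of $\mathbf{1}^\top\mathbf{v}$, which is forced to $c=0$, $\mathbf{u}=\mathbf{v}$... but wait, both cases give a candidate $\mathbf{u}$; I need to recheck which $\mathbf{u}$ actually satisfies $\mathbf{1}^\top\mathbf{u} = c$. For $c=0$: need $\mathbf{1}^\top\mathbf{v}=0$, true, and then $\mathbf{u}=\mathbf{v}$ works. For $c=1$: need $\mathbf{1}^\top(\mathbf{v}+\mathbf{1})=1$; since $\mathbf{1}^\top\mathbf{v}=0$ and $\mathbf{1}^\top\mathbf{1}=1$, this gives $1$, so $\mathbf{u}=\mathbf{v}+\mathbf{1}$ also works. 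So there are exactly two solution vectors $\mathbf{u}$, namely $\mathbf{v}$ and $\mathbf{v}+\mathbf{1}$ — but these are complementary, $\mathbf{u}$ and $\mathbf{1}+\mathbf{u}$, and switching with respect to $U_1$ and with respect to its complement $V \setminus U_1$ produce the \emph{same} graph. Therefore $\operatorname{sw}(H)$ contains exactly one Euler graph.

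The main obstacle, and the place where care is needed, is bookkeeping the mod-$(1-\zeta)$ reduction of the switching operation — in particular confirming that conjugation by $D$ acts on $\bar{A}$ exactly as the rank-$\leq 2$ symmetric perturbation $\mathbf{u}\mathbf{1}^\top + \mathbf{1}\mathbf{u}^\top$, including that diagonal entries are genuinely untouched and that off-diagonal entries $A_{i,j}$ with exactly one endpoint in $U_1$ get multiplied by a root of unity that reduces to a nonzero element of $\mathbb{F}_2$ differing from the original. Since the paper explicitly says the proof is essentially that of \cite[Theorem 1]{hage03}, I would keep this brief: state the $\mathbb{F}_2$-reformulation, cite Lemma~\ref{lem:2sumApower} for $\mathbf{1}^\top A\mathbf{1} \in (1-\zeta)\mathbb{Z}[\zeta]$ and the oddness of $n$ for invertibility of the relevant system, observe that the two solutions $\mathbf{u}, \mathbf{1}+\mathbf{u}$ give the same switched graph, and refer the reader to \cite{hage03} for the routine details.
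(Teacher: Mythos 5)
Your proof is correct and follows essentially the route the paper intends: the paper omits the argument, deferring to the $\mathbb F_2$ switching-class result of Hage, Harju, and Welzl, and your reduction modulo the prime ideal $(1-\zeta)\mathbb Z[\zeta]$ (with residue field $\mathbb F_2$) is exactly the bridge needed, with the key equation $\mathbf u + (\mathbf 1^\top\mathbf u)\mathbf 1 = \bar A\mathbf 1$ having precisely the complementary pair of solutions $\mathbf v$ and $\mathbf v + \mathbf 1$ when $n$ is odd. The only cosmetic simplification available is that $\mathbf 1^\top \bar A\mathbf 1 = 0$ already follows from $\bar A$ being symmetric with zero diagonal over $\mathbb F_2$, so the appeal to Lemma~\ref{lem:2sumApower} is not strictly needed.
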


In Lemma~\ref{lem:matdet}, we see summands of the form $\mathbf 1^\top A^k \mathbf 1$. 
When the residue graph of $H$ is an Euler graph we can obtain restrictions on these summands even stronger than those of Lemma~\ref{lem:2sumApower}, as shown in the next result.

\begin{lemma}
     \label{lem:sumEulerpower}
Let $n \in \mathbb N$, $q>2$ be a power of $2$, $H \in \mathcal H_n(q)$, and $A = (J-H)/(1-\zeta)$.
   Suppose that $\Gamma^{\mathrm{res}}(H)$ is an Euler graph.
   Then $\mathbf 1^\top A \mathbf 1 \in (1-\zeta) 
   \mathbb Z[\zeta]$ and $\mathbf 1^\top A^k \mathbf 1 
   \in (1-\zeta)^2 \mathbb Z[\zeta]$ for all $k \geqslant 
   2$.
\end{lemma}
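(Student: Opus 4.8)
The plan is to analyze $\mathbf 1^\top A^k \mathbf 1$ by translating it into a sum of weights of walks in $\Gamma = \Gamma(H)$: by Remark~\ref{rem:weightPowers}, $\mathbf 1^\top A^k \mathbf 1 = \sum_{\mathbf w} \operatorname{wt}(\mathbf w)$, where the sum is over all (not necessarily closed) $k$-walks $\mathbf w = (w_0,\dots,w_k)$ in $\Gamma$. The key observation is that the hypothesis ``$\Gamma^{\mathrm{res}}(H)$ is an Euler graph'' is equivalent (as noted in the text) to saying that each row sum of $A$ lies in $(1-\zeta)\mathbb Z[\zeta]$; i.e., $(A\mathbf 1)_i \in (1-\zeta)\mathbb Z[\zeta]$ for every $i$, and hence also $\mathbf 1^\top A \in (1-\zeta)\mathbb Z[\zeta]$ entrywise. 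This immediately gives $\mathbf 1^\top A \mathbf 1 \in (1-\zeta)\mathbb Z[\zeta]$ and more generally $\mathbf 1^\top A^k \mathbf 1 \in (1-\zeta)^2 \mathbb Z[\zeta]$ for $k \geqslant 2$ \emph{provided} one can show that $A^{k-1}$ maps the all-ones vector into a vector whose entries are congruent to a common constant modulo $(1-\zeta)\mathbb Z[\zeta]$ — because then $\mathbf 1^\top A^{k}\mathbf 1 = (\mathbf 1^\top A)(A^{k-1}\mathbf 1)$ is a sum of products of two elements each in $(1-\zeta)\mathbb Z[\zeta]$... but that naive bound only gives $(1-\zeta)^2$ if \emph{both} factors are in the ideal, which requires $A^{k-1}\mathbf 1$ to have all entries in $(1-\zeta)\mathbb Z[\zeta]$, and that is not obviously true. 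So the real work is to organize the walk sum more cleverly.

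Here is the approach I would actually carry out. For $k \geqslant 2$, split a $k$-walk $\mathbf w = (w_0,\dots,w_k)$ according to whether it is \emph{simple} (consecutive vertices distinct) or not. By Remark~\ref{rem:nonsimple}, every non-simple walk has weight in $(1-\zeta)\mathbb Z[\zeta]$ since $q>2$ is a power of $2$; in fact I want to push this further for the non-simple contributions, pairing each non-simple walk $\mathbf w$ with its reverse $\mathbf w^\top$ (these are distinct unless palindromic, and palindromic non-simple walks can be handled separately — a palindromic walk of even length has a repeated consecutive pair forced at the centre, contributing a diagonal factor $2/(1-\zeta)$, hence weight in $(1-\zeta)\mathbb Z[\zeta]$, and after summing over the $n$-fold freedom one extracts an extra factor). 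For the simple walks, I would use the analogue of Corollary~\ref{cor:UpartWalk} for \emph{open} walks: pair each simple $k$-walk $\mathbf w$ with $\mathbf w^\top$; then $\operatorname{wt}(\mathbf w) + \operatorname{wt}(\mathbf w^\top) \in (1-\zeta)\mathbb Z[\zeta]$ by Remark~\ref{rem:weightsum} (which holds for all walks, not just closed ones, since its proof only uses $2 \in (1-\zeta)\mathbb Z[\zeta]$). This already recovers $\mathbf 1^\top A^k \mathbf 1 \in (1-\zeta)\mathbb Z[\zeta]$, matching Lemma~\ref{lem:2sumApower}; to get the second power of $(1-\zeta)$ I would additionally exploit the Euler hypothesis, which controls the \emph{endpoints}: grouping simple walks by their unordered endpoint pair $\{w_0,w_k\}$ and by their ``middle'' and using that row sums of $A$ are in $(1-\zeta)\mathbb Z[\zeta]$, the contribution of each fixed middle portion summed over admissible first and last steps picks up a factor $(1-\zeta)$ from each free end.

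The cleanest route, and the one I'd aim to present, is inductive. Write $\mathbf v_k := A^k \mathbf 1$. The Euler hypothesis says $\mathbf v_1 \equiv 0 \pmod{(1-\zeta)\mathbb Z[\zeta]}$ entrywise. Then $\mathbf v_k = A \mathbf v_{k-1}$, so every entry of $\mathbf v_k$ is an $\mathbb Z[\zeta]$-linear combination (with coefficients the entries of $A$, which lie in $\mathbb Z[\zeta]$) of entries of $\mathbf v_{k-1}$; since $\mathbf v_1$ is already divisible by $(1-\zeta)$, so is $\mathbf v_k$ for all $k \geqslant 1$. Hence $\mathbf 1^\top A^k \mathbf 1 = \mathbf 1^\top \mathbf v_k \in (1-\zeta)\mathbb Z[\zeta]$ for all $k \geqslant 1$, giving the first claim. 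For $k \geqslant 2$, write $\mathbf 1^\top A^k \mathbf 1 = (\mathbf 1^\top A)(A^{k-1}\mathbf 1) = \mathbf u^\top \mathbf v_{k-1}$ where $\mathbf u^\top := \mathbf 1^\top A$; by the Euler hypothesis (applied via $A = A^*$ composed with $\mathbf 1^\top A \mathbf 1 \in (1-\zeta)\mathbb Z[\zeta]$, or directly since $\mathbf 1^\top A = (A\mathbf 1)^\top$ up to the entrywise conjugation relation $A_{i,j} \equiv A_{j,i} \bmod (1-\zeta)\mathbb Z[\zeta]$, which is the relation checked at the end of Lemma~\ref{lem:detISMnodd}) we get $\mathbf u \equiv 0 \pmod{(1-\zeta)\mathbb Z[\zeta]}$ entrywise as well, and $\mathbf v_{k-1} \equiv 0 \pmod{(1-\zeta)\mathbb Z[\zeta]}$ entrywise from the induction above (valid since $k-1 \geqslant 1$). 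Therefore $\mathbf 1^\top A^k \mathbf 1 = \sum_i u_i (\mathbf v_{k-1})_i \in (1-\zeta)^2 \mathbb Z[\zeta]$. The main obstacle is the bookkeeping for the conjugation/transpose step — verifying that $\mathbf 1^\top A$ is entrywise in $(1-\zeta)\mathbb Z[\zeta]$ given that $A\mathbf 1$ is — which reduces to the congruence $A_{i,j} - \overline{A_{i,j}} \in (1-\zeta)\mathbb Z[\zeta]$ already used earlier in the paper; everything else is routine.
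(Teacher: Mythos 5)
Your final (third) paragraph is correct and is essentially the paper's own proof: the paper likewise writes $A\mathbf 1=(1-\zeta)\mathbf v$ from the Euler hypothesis and factors $\mathbf 1^\top A^k\mathbf 1$ as $(\mathbf 1^\top A)A^{k-2}(A\mathbf 1)$, extracting one $(1-\zeta)$ from each end via the congruence $A_{i,j}-\overline{A_{i,j}}\in(1-\zeta)\mathbb Z[\zeta]$, exactly as you do. The walk-counting explorations in your first two paragraphs are dispensable; the inductive/algebraic argument you settle on is the intended one.
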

\begin{proof}
    Since $\Gamma^{\mathrm{res}}(H)$ is an Euler graph, we can write $A\mathbf 
    1=(1-\zeta)\mathbf v$ for some $\mathbf v \in \mathbb Z[\zeta]^n$.
    Thus, for $k \geqslant 2$, we have $ \mathbf 1^\top A^k
    \mathbf 1=
    (1-\zeta^{-1})(1-\zeta)
    \mathbf v^\top A^{k-2} \mathbf v$, 
    which is in $(1-\zeta)^2 \mathbb Z[\zeta]$.
\end{proof}

In Corollary~\ref{cor:mainHSgen}, we see a term of the form $\mathbf 1^\top (A \circ A^\top)^k \mathbf 1$. 
Next, we show that this term is in $2(1-\zeta)\mathbb Z[\zeta]$ when the residue graph of $H$ is an Euler graph.

\begin{lemma}
    \label{lem:need2prove}
    Let $n \in \mathbb N$, $q > 2$ be a power of $2$, $H \in \mathcal H_n(q)$, and $A = (J-H)/(1-\zeta)$.
    Suppose that $\Gamma^{\mathrm{res}}(H)$ is an Euler graph.
    Then $\mathbf 1^\top (A \circ A^\top)^k \mathbf 1 \in 2(1-\zeta) \mathbb Z[\zeta]$ for all $k \geqslant 2$.
\end{lemma}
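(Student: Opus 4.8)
The plan is to rewrite $A\circ A^\top$ in terms of a real integral matrix, peel off a factor $\rho^2$ using the Euler hypothesis, and then control what remains modulo $2$ by an induction on the exponent.

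\textbf{Setup.} Since $H$ is Hermitean, $H^\top=\overline H$, so $A_{j,i}=\frac{1-\overline{H_{i,j}}}{1-\zeta}=\frac{1-\zeta^{-1}}{1-\zeta}\,\overline{A_{i,j}}=-\zeta^{-1}\overline{A_{i,j}}$, whence $(A\circ A^\top)_{i,j}=-\zeta^{-1}|A_{i,j}|^2$. Write $M$ for the matrix with $M_{i,j}=|A_{i,j}|^2=(2-H_{i,j}-H_{j,i})/\rho$. I would first record that $M$ is real and symmetric with entries in $\mathbb Z[\zeta+\zeta^{-1}]$: for $i\ne j$ this uses $2-\zeta^a-\zeta^{-a}=(1-\zeta^a)(1-\zeta^{-a})=\rho\,\mu_a$ with $\mu_a=(1+\cdots+\zeta^{a-1})(1+\cdots+\zeta^{-(a-1)})\in\mathbb Z[\zeta+\zeta^{-1}]$, and the diagonal entries are $0$ or $4/\rho$, with $2/\rho\in\mathbb Z[\zeta+\zeta^{-1}]$ because $2\mathbb Z[\zeta+\zeta^{-1}]=(\rho\mathbb Z[\zeta+\zeta^{-1}])^{q/4}$ (the prime $\rho\mathbb Z[\zeta+\zeta^{-1}]$ has residue degree $1$ and $2$ is totally ramified of degree $[\mathbb Q(\zeta+\zeta^{-1}):\mathbb Q]=q/4$). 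Since $A\circ A^\top=-\zeta^{-1}M$, since $-\zeta^{-1}$ is a unit, and since $2\rho\mathbb Z[\zeta+\zeta^{-1}]\subseteq 2(1-\zeta)\mathbb Z[\zeta]$, it suffices to prove $\mathbf 1^\top M^k\mathbf 1\in 2\rho\mathbb Z[\zeta+\zeta^{-1}]$. Throughout I write $v_\rho$ for the exponential valuation at $\rho\mathbb Z[\zeta+\zeta^{-1}]$, so $v_\rho(2)=q/4$ and $v_\rho(\rho)=1$; the goal becomes $v_\rho\big(\mathbf 1^\top M^k\mathbf 1\big)\geqslant q/4+1$.

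\textbf{Using the Euler hypothesis.} Reducing modulo $\rho$ (in $\mathbb Z[\zeta+\zeta^{-1}]/\rho\mathbb Z[\zeta+\zeta^{-1}]\cong\mathbb F_2$) one has $\mu_a\equiv a$, each diagonal entry $M_{i,i}\equiv 0$, and (by the description of $\Gamma^{\mathrm{res}}(H)$ together with $1-\zeta^a=(1-\zeta)(1+\cdots+\zeta^{a-1})$) vertex $i$ is adjacent to $j$ in $\Gamma^{\mathrm{res}}(H)$ precisely when $a_{i,j}$ is odd, where $H_{i,j}=\zeta^{a_{i,j}}$. Hence the $i$th entry of $M\mathbf 1$ is congruent modulo $\rho$ to $\deg_{\Gamma^{\mathrm{res}}(H)}(i)$, which is even because $\Gamma^{\mathrm{res}}(H)$ is an Euler graph. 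So $M\mathbf 1=\rho\mathbf u$ with $\mathbf u\in\mathbb Z[\zeta+\zeta^{-1}]^n$, and for $k\geqslant 2$, symmetry of $M$ yields
\[
\mathbf 1^\top M^k\mathbf 1=(M\mathbf 1)^\top M^{k-2}(M\mathbf 1)=\rho^2\,\mathbf u^\top M^{k-2}\mathbf u .
\]
It therefore remains to show $v_\rho\big(\mathbf u^\top M^{k-2}\mathbf u\big)\geqslant q/4-1$.

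\textbf{Main step: controlling $\mathbf u^\top M^j\mathbf u$ modulo $2$.} I would prove by strong induction on $j\geqslant 0$ that $v_\rho\big(\mathbf u^\top M^j\mathbf u\big)\geqslant q/4-1$, exploiting that modulo $2$ a quadratic form $\mathbf y^\top M\mathbf y$ collapses to its diagonal $\sum_l M_{l,l}y_l^2$ and a sum of squares $\sum_l y_l^2$ collapses to $\big(\sum_l y_l\big)^2$. For $j=0$: $\mathbf u^\top\mathbf u\equiv(\mathbf 1^\top\mathbf u)^2\pmod 2$, and $\mathbf 1^\top\mathbf u=\rho^{-1}\mathbf 1^\top M\mathbf 1$ where $\mathbf 1^\top M\mathbf 1=\sum_i M_{i,i}+2\sum_{i<j}M_{i,j}\in 2\mathbb Z[\zeta+\zeta^{-1}]$ (using $M_{i,i}\in 2\mathbb Z[\zeta+\zeta^{-1}]$ from the setup), so $v_\rho(\mathbf 1^\top\mathbf u)\geqslant q/4-1$ and $v_\rho\big((\mathbf 1^\top\mathbf u)^2\big)\geqslant q/2-2$; taking the minimum with $v_\rho(2)=q/4$ gives at least $q/4-1$. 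For $j$ odd: $\mathbf u^\top M^j\mathbf u=\mathbf z^\top M\mathbf z$ with $\mathbf z=M^{(j-1)/2}\mathbf u$, which is $\equiv\sum_l M_{l,l}z_l^2\equiv 0\pmod 2$. For $j=2i$ with $i\geqslant 1$: $\mathbf u^\top M^{2i}\mathbf u=\lVert M^i\mathbf u\rVert^2\equiv(\mathbf 1^\top M^i\mathbf u)^2\pmod 2$, and $\mathbf 1^\top M^i\mathbf u=(M\mathbf 1)^\top M^{i-1}\mathbf u=\rho\,\mathbf u^\top M^{i-1}\mathbf u$, so by the induction hypothesis $v_\rho\big((\mathbf 1^\top M^i\mathbf u)^2\big)\geqslant 2+2(q/4-1)=q/2$, again at least $q/4-1$ after intersecting with $2\mathbb Z[\zeta+\zeta^{-1}]$. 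Feeding this into the displayed identity gives $v_\rho\big(\mathbf 1^\top M^k\mathbf 1\big)\geqslant 2+(q/4-1)=q/4+1$, i.e.\ $\mathbf 1^\top M^k\mathbf 1\in 2\rho\mathbb Z[\zeta+\zeta^{-1}]$, completing the proof.

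The step I expect to be the real work is the inductive bookkeeping of $\rho$-valuations in the third paragraph: keeping track of the "error terms" produced when reducing quadratic forms and sums of squares modulo $2$ (which is only divisibility by $\rho^{q/4}$, hence not very deep for $q=4$) and checking that they never swamp the gain. The two concrete facts on which everything rests — that $M$ has entries in $\mathbb Z[\zeta+\zeta^{-1}]$ with $M_{l,l}\in 2\mathbb Z[\zeta+\zeta^{-1}]$, and that $\mathbf 1^\top M\mathbf 1\in 2\mathbb Z[\zeta+\zeta^{-1}]$ — are the ones I would verify most carefully; the remaining manipulations are routine linear algebra with the symmetric matrix $M$.
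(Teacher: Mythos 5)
Your proof is correct, but it takes a genuinely different route from the paper. The paper reduces $\mathbf 1^\top (A\circ A^\top)^k\mathbf 1$ to $\operatorname{tr}((A\circ A^\top)^k)$ via symmetry, passes to the $\{0,1\}$-adjacency matrix of the residue graph using the walk--orbit machinery of Section~3 (Remark~\ref{rem:nonsimple} and Corollary~\ref{cor:UpartWalk}), and then invokes the Euler-graph lemma \cite[Lemma 2.6]{GreavesYatsyna19}. You instead observe that $A\circ A^\top$ is a unit multiple of the real symmetric matrix $M$ with $M_{i,j}=|A_{i,j}|^2\in\mathbb Z[\zeta+\zeta^{-1}]$, so the whole computation lives in the totally real ring where $2\mathbb Z[\zeta+\zeta^{-1}]=\rho^{q/4}\mathbb Z[\zeta+\zeta^{-1}]$; the Euler hypothesis becomes precisely $M\mathbf 1\in\rho\,\mathbb Z[\zeta+\zeta^{-1}]^n$, which lets you peel off $\rho^2$, and the remaining factor is controlled by a clean valuation induction using the facts that $\mathbf y^\top M\mathbf y\equiv\sum_l M_{l,l}y_l^2$ and $\sum_l y_l^2\equiv(\sum_l y_l)^2$ modulo $2$. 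I checked the details: the identity $A_{j,i}=-\zeta^{-1}\overline{A_{i,j}}$, the integrality of the $\mu_a$ and of $4/\rho$, the congruence $\mu_a\equiv a\pmod{\rho}$ identifying $M\bmod\rho$ with the residue graph, and all three branches of the induction (including the base case, where the inequality $q/2-2\geqslant q/4-1$ needs exactly $q\geqslant 4$, which holds) are sound, and your target $2\rho\mathbb Z[\zeta+\zeta^{-1}]$ is equivalent to the stated $2(1-\zeta)\mathbb Z[\zeta]$ for this real-up-to-a-unit quantity by Proposition~\ref{pro:realIdeal}. What your approach buys is self-containment: it avoids the walk combinatorics and the external citation entirely, and makes the role of the Euler condition completely transparent; what the paper's approach buys is brevity, since it reuses infrastructure already built for Lemma~\ref{lem:weightedBurnside} and a result already proved elsewhere.
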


\begin{proof}
    Note that $\mathbf 1^\top (A \circ A^\top)^k \mathbf 1$ is the sum of $\mathrm{wt}(\mathbf w)\mathrm{wt}(\mathbf w^\top)$ of each walk $\mathbf w$ of $\Gamma(H)$ of length $k$.
    Let $M$ be the $\{0,1\}$-adjacency matrix of $\Gamma^{\mathrm{res}}(H)$.
    For each $i,j \in \{1,\dots,n\}$, 
    by Proposition~\ref{pro:idealprelim} (g), we have $A_{i,j} - M_{i,j} \in (1-\zeta)\mathbb Z[\zeta]$.
    Hence, we have 
    \begin{equation}
    \label{eqn:AtoM}
        ((A \circ A^\top)^k)_{i,j} - (M^k)_{i,j} \in (1-\zeta)\mathbb Z[\zeta].
    \end{equation}
    By Lemma~\ref{lem:trCong}, there exists $U \subset W_k^\prime(\Gamma(H))$ such that
    $$
    \operatorname{tr}( (A \circ A^\top)^k) - 2\sum_{\mathbf w \in U} \mathrm{wt}(\mathbf w)\mathrm{wt}(\mathbf w^\top) \in 2(1-\zeta)\mathbb Z[\zeta].
    $$
    Furthermore, using \eqref{eqn:AtoM}, we have
    \begin{equation}
    \label{eqn:trAtotrM}
        \frac{\operatorname{tr}( (A \circ A^\top)^k)}{2} - \frac{\operatorname{tr}(M^k)}{2}  \in (1-\zeta)\mathbb Z[\zeta].
    \end{equation}
    Since $ (A \circ A^\top)^k$ is symmetric, using \eqref{eqn:AtoM} together with \eqref{eqn:trAtotrM}, we find that
    $$\frac{\mathbf 1^\top (A \circ A^\top)^k \mathbf 1}{2} - \frac{\mathbf 1^\top M^k \mathbf 1}{2} \in (1-\zeta)\mathbb Z[\zeta].$$
    The result then follows from \cite[Lemma 2.6]{GreavesYatsyna19} since $\Gamma^{\mathrm{res}}(H)$ is an Euler graph.
\end{proof}

Recall from Section~\ref{sec:graphs} the definition of $\mathfrak W_N(H)$:
\[
 \mathfrak W_N(H):= \left \{ \mathbf w \in \operatorname{fix}_{\Gamma(H)}(rs) \; \mid \; \frac{(1-\zeta)^2}{4}\operatorname{wt}(\mathbf w) \not \in (1-\zeta)\mathbb Z[\zeta] \right \},
\]
where $r$ and $s$ are the generators of $D_N = \langle r, s \; |\; r^N, s^2, (rs)^2 \rangle$.
Before we establish our next main tool (Lemma~\ref{lem:trCongBurnside}, below) we need one more ingredient.

\begin{lemma}
\label{lem:EulerW}
    Let $n \in \mathbb N$, $q > 2$ be a power of $2$, $N \geqslant 4$ be an even integer, $H \in \mathcal H_n(q)$, and $A = (J-H)/(1-\zeta)$.
    Suppose that $\Gamma^{\mathrm{res}}(H)$ is an Euler graph.
    Then $|\mathfrak W_N(H)|$ is even.
\end{lemma}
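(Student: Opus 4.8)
The plan is to analyze the set $\mathfrak W_N(H)$ by recalling its definition: it consists of walks $\mathbf w = (w_0, w_1, \dots, w_N) \in \operatorname{fix}_{\Gamma(H)}(rs)$ with $w_0 = w_1$ (and, since $rs$ is the reflection fixing the edge between positions $0$ and $N/2$, also $w_{N/2} = w_{N/2+1}$) for which $\tfrac{(1-\zeta)^2}{4}\operatorname{wt}(\mathbf w) \notin (1-\zeta)\mathbb Z[\zeta]$. Because $w_0 = w_1$ forces a loop at $w_0$, we have $A_{w_0,w_0} = 2/(1-\zeta)$, and similarly at $w_{N/2}$; writing $\operatorname{wt}(\mathbf w) = \tfrac{4}{(1-\zeta)^2}\operatorname{wt}(\mathbf w')$ where $\mathbf w'$ is the induced walk with these two loop-steps removed, the condition becomes $\operatorname{wt}(\mathbf w') \notin (1-\zeta)\mathbb Z[\zeta]$, i.e.\ $w_0$ and $w_{N/2}$ lie in the residue graph $\Gamma^{\text{res}}(H)$ and the remaining steps of $\mathbf w'$ all use residue-graph edges. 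So $\mathfrak W_N(H)$ is essentially in bijection with a set of closed walks in $\Gamma^{\text{res}}(H)$ decorated by the two loop positions.

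The key idea is to exhibit a fixed-point-free involution on $\mathfrak W_N(H)$, which immediately forces $|\mathfrak W_N(H)|$ to be even. The natural candidate is reversal $\mathbf w \mapsto \mathbf w^\top = (w_N, w_{N-1}, \dots, w_0)$: since $\mathbf w \in \operatorname{fix}_{\Gamma}(rs)$ and $rs$ commutes appropriately with $s$ within $D_N$, the reversed walk again lies in $\operatorname{fix}_{\Gamma}(r^{j}s)$ for the appropriate $j$, and one checks it stays in $\operatorname{fix}_{\Gamma}(rs)$ after the relabeling; reversal preserves weight, hence preserves membership in $\mathfrak W_N(H)$. The remaining task is to show reversal has no fixed points on $\mathfrak W_N(H)$: if $\mathbf w = \mathbf w^\top$ then $\mathbf w$ is palindromic with $w_0 = w_1$, giving $w_0 = w_N = w_{N-1} = \cdots$, and tracing the palindrome symmetry $w_i = w_{N-i}$ together with the loop at position $0$ produces a contradiction with the structure forced by $rs$-invariance — concretely, a palindromic $rs$-fixed walk would have to be built from a block and its reverse in a way incompatible with having loops exactly at positions $0$ and $N/2$ unless it degenerates, and I would rule out the degenerate case directly. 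If plain reversal does turn out to have fixed points, the fallback is to compose reversal with the rotation $r^{N/2}$ (which swaps the two loop positions $0 \leftrightarrow N/2$) to get a genuinely fixed-point-free involution, using that $rs$ is conjugate to itself under $r^{N/2}$.

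Alternatively, and perhaps more cleanly, I would pass to the reformulation above: $|\mathfrak W_N(H)|$ counts pairs consisting of a closed walk in $\Gamma^{\text{res}}(H)$ of the relevant length through two marked vertices, and then invoke the same reverse-walk pairing already used throughout Section~\ref{sec:graphs} (Corollary~\ref{cor:UpartWalk} and Lemma~\ref{lem:Upart}) to partition these walks into pairs $\{\mathbf w, \mathbf w^\top\}$ with no walk fixed, since the underlying walk has a distinguished non-palindromic structure coming from the two loop markers at positions $0$ and $N/2$ (which are at distance $N/2$, and $N/2 \geqslant 2$ since $N \geqslant 4$). The main obstacle will be the bookkeeping needed to confirm that reversal (possibly twisted by $r^{N/2}$) genuinely has no fixed points — i.e.\ handling the potentially palindromic walks carefully — but this is exactly the kind of argument already executed in Lemma~\ref{lem:evenPalin} and Lemma~\ref{lem:kappa}, so I expect to reuse that machinery: a palindromic walk in $\mathfrak W_N(H)$ would, by the loop at position $0$ together with the palindrome relation $w_i = w_{N-i}$, force $w_0 = w_1 = w_{N-1}$ and hence a repeated consecutive vertex off the marked loop positions, contradicting that all other steps lie in the (simple) residue graph.
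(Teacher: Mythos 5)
Your set-up is correct and matches the paper's: each $\mathbf w\in\mathfrak W_N(H)$ has loops at $w_0=w_1$ and $w_{N/2}=w_{N/2+1}$ and decomposes as $(w_0,w_0)\,\mathbf x\,\mathbf x^\top\,(w_{N/2},w_{N/2})$ for a length-$(N/2-1)$ walk $\mathbf x=(w_1,\dots,w_{N/2})$ that must live in $\Gamma^{\text{res}}(H)$. But the core of your argument --- that some reversal-type involution on $\mathfrak W_N(H)$ is fixed-point-free --- fails. Plain reversal $\mathbf w\mapsto\mathbf w^\top$ does not preserve $\operatorname{fix}_\Gamma(rs)$ (it lands in $\operatorname{fix}_\Gamma(r^{-1}s)$), and the rotation needed to bring it back gives the identity map. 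The only usable symmetry is $\mathbf w\mapsto\mathbf w^{N/2}$, which on core walks is $\mathbf x\mapsto\mathbf x^\top$; its fixed points are the walks with $\mathbf x$ palindromic, and these genuinely occur whenever $N\equiv 2\pmod 4$. Concretely, for $N=6$ take a vertex $v$ with a loop in $\Gamma(H)$ and a residue-graph neighbour $u$: the walk $\mathbf w=(v,v,u,v,v,u,v)$ lies in $\mathfrak W_6(H)$ (its weight times $(1-\zeta)^2/4$ is $(A_{v,u}A_{u,v})^2$, a unit mod $(1-\zeta)$) and is fixed by $r^{3}$. Your degeneracy argument only excludes walks with $\mathbf w=\mathbf w^\top$, which is a different condition; the two loop markers being antipodal does not prevent $\mathbf x$ from being palindromic, since then the two loops simply sit at the same vertex.

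The missing idea --- and the heart of the paper's proof --- is that the fixed points must be \emph{counted}, not excluded: one shows the number of palindromic walks of even length $2l$ in $\Gamma^{\text{res}}(H)$ starting and ending at a given vertex $v$ is even, by induction on $l$ with base case $|P_2(v)|=\deg(v)$ and inductive step peeling off the middle vertex (an even number of extensions at each stage because every degree is even). This is precisely where the Euler-graph hypothesis enters; your proposal never uses that hypothesis in the parity argument, which is a decisive sign of the gap, since the lemma is false without it (a looped vertex of odd residue-degree already makes the $N=6$ count odd).
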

\begin{proof}
    Let $\mathbf w = (w_0,w_1,\dots,w_N) \in \mathfrak W_N(H)$.
    Then $w_0 = w_1$, $w_{N/2}=w_{N/2+1}$, and furthermore $w_{N-i}=w_{i+1}$ for all $i \in  \{0,\dots,N/2\}$.
    Therefore, $\mathbf w = (w_0,w_0)\mathbf x \mathbf x^\top (w_{N/2},w_{N/2})$, where $\mathbf x$ is a walk of length $N/2-1$ from $w_0$ to $w_{N/2}$ and concatenation of walks is as defined in the proof of Lemma~\ref{lem:evenPalin}.
    Since $\mathrm{wt}(\mathbf w) = 4\mathrm{wt}(\mathbf x)\mathrm{wt}(\mathbf x^\top)/(1-\zeta)^2$ and $\mathbf w \in \mathfrak W_N(H)$, we must have $\mathrm{wt}(\mathbf x) \not \in (1-\zeta)\mathbb Z[\zeta]$.
    Hence each walk $\mathbf w = (w_0,w_1,\dots,w_{N/2},w_{N/2+1},\dots,w_N) \in \mathfrak W_N(H)$ corresponds to a walk $\mathbf x = (w_1,\dots,w_{N/2})$ of length $N/2-1$ in $\Gamma^{\mathrm{res}}(H)$, where both the vertices $w_0=w_1$ and $w_{N/2}=w_{N/2+1}$ have loops.
    
    Now, if $\mathbf x$ and $\mathbf x^\top$ are distinct, we can pair up $\mathbf x$ with $\mathbf x^\top$.
    It, therefore, remains to show that the number of palindromic walks of length $N/2-1$ in $\Gamma^{\mathrm{res}}(H)$ starting and ending at a vertex with a loop in $\Gamma(H)$ is even.
    In fact, we shall show that for all vertices $v$, the number of palindromic walks of length $N/2-1$ in $\Gamma^{\mathrm{res}}(H)$ starting and ending at $v$ is even.
    
    Let $P_{N/2-1}(v)$ be the set of palindromic walks in $\Gamma^{\mathrm{res}}(H)$ starting and ending at $v$.
    First, if $N/2-1$ is odd, then it is obvious that $P_{N/2-1}(v)$ is empty.
    Now suppose $k = N/2-1$ is even.
    Note that $|P_{2}(v)|$ is equal to the degree of the vertex $v$, which is even since $\Gamma^{\mathrm{res}}(H)$ is an Euler graph.
    It remains to assume that $k \geqslant 4$.
    Let $\mathbf p = (p_0,\dots,p_{k/2-1},p_{k/2},p_{k/2-1},\dots,p_0) \in P_{k}(v)$.
    Then the walk $(p_0,\dots,p_{k/2-1},\dots,p_0)$ obtained by removing the vertex $p_{k/2}$ and its incident edge to $p_{k/2-1}$ from the walk $\mathbf p$ is in $P_{k-2}(v)$.
    Thus, each walk in $P_{k}(v)$ can be obtained from a walk in $P_{k-2}(v)$.
    Furthermore, each walk in $P_{k-2}(v)$ can be extended to an even number of walks in $P_{k}(v)$.
    Indeed, since every vertex of $\Gamma^{\mathrm{res}}(H)$ has even degree, there are always an even number of choices for vertices $p_{k/2}$ adjacent to $p_{k/2-1}$ when extending the walk $(p_0,\dots,p_{k/2-1},\dots,p_0)$ to $(p_0,\dots,p_{k/2-1},p_{k/2},p_{k/2-1},\dots,p_0)$.
    This completes our proof.
\end{proof}

Now we can use Lemma~\ref{lem:need2prove} and Lemma~\ref{lem:EulerW} to obtain the following refinement of Corollary~\ref{cor:mainHSgen}. 

\begin{lemma}
\label{lem:trCongBurnside}
    Let $n \in \mathbb N$, $q > 2$ be a power of $2$, $N \geqslant 3$ be an integer, $H \in \mathcal H_n(q)$, and $A = (J-H)/(1-\zeta)$.
    Suppose that $\Gamma^{\mathrm{res}}(H)$ is an Euler graph.
    Then
  $$\sum_{d \;|\; N} \phi(N/d) \operatorname{tr}((A^{\circ N/d})^d) \in (1-\zeta)N \mathbb Z[\zeta].$$
\end{lemma}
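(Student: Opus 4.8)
The plan is to deduce the lemma from Corollary~\ref{cor:mainHSgenOdd} and Corollary~\ref{cor:mainHSgen} by showing that the two ``extra'' terms appearing in the even case are harmless. First I would split on the parity of $N$. If $N \geqslant 3$ is odd, the asserted congruence is literally the statement of Corollary~\ref{cor:mainHSgenOdd}, and in this case the Euler hypothesis is not even needed. So it remains to treat $N \geqslant 4$ even.

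For $N \geqslant 4$ even, Corollary~\ref{cor:mainHSgen} gives
\[
\sum_{d \;|\; N} \varphi(N/d)\operatorname{tr}\bigl((A^{\circ N/d})^d\bigr) + N|\mathfrak W_N(H)| + \frac{N}{2}\,\mathbf 1^\top (A\circ A^\top)^{N/2}\mathbf 1 \in (1-\zeta)N\mathbb Z[\zeta],
\]
so it suffices to show that $N|\mathfrak W_N(H)| + \tfrac{N}{2}\,\mathbf 1^\top(A\circ A^\top)^{N/2}\mathbf 1$ lies in $(1-\zeta)N\mathbb Z[\zeta]$; pulling out the common factor $N$, this amounts to proving
\[
|\mathfrak W_N(H)| + \frac12\,\mathbf 1^\top (A\circ A^\top)^{N/2}\mathbf 1 \in (1-\zeta)\mathbb Z[\zeta].
\]
For the first summand, Lemma~\ref{lem:EulerW} applies (since $\Gamma^{\text{res}}(H)$ is an Euler graph and $N\geqslant 4$ is even) and shows that $|\mathfrak W_N(H)|$ is an even integer, hence lies in $2\mathbb Z \subseteq (1-\zeta)\mathbb Z[\zeta]$, using that $q$ is a power of $2$ so $2 \in (1-\zeta)\mathbb Z[\zeta]$. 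For the second summand, $N\geqslant 4$ forces $N/2 \geqslant 2$, so Lemma~\ref{lem:need2prove} yields $\mathbf 1^\top (A\circ A^\top)^{N/2}\mathbf 1 \in 2(1-\zeta)\mathbb Z[\zeta]$, whence $\tfrac12\,\mathbf 1^\top(A\circ A^\top)^{N/2}\mathbf 1 \in (1-\zeta)\mathbb Z[\zeta]$. Adding these, the displayed membership holds; multiplying by $N$ and subtracting from the congruence of Corollary~\ref{cor:mainHSgen} yields the lemma.

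Essentially all of the difficulty has already been absorbed into Lemma~\ref{lem:need2prove} and Lemma~\ref{lem:EulerW}, so the present argument is pure bookkeeping. The only points needing a moment's care are verifying the hypotheses of those two lemmas (in particular that $N/2 \geqslant 2$, which is precisely why the even case requires $N \geqslant 4$ rather than $N \geqslant 3$) and tracking the global factor of $N$, so that the two extra terms are seen to lie in $(1-\zeta)N\mathbb Z[\zeta]$ rather than merely in $(1-\zeta)\mathbb Z[\zeta]$.
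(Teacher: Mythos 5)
Your proof is correct and follows exactly the route the paper intends: the paper omits an explicit proof but states that the lemma is obtained from Corollary~\ref{cor:mainHSgen} by applying Lemma~\ref{lem:need2prove} and Lemma~\ref{lem:EulerW} to dispose of the two extra terms, which is precisely your argument (together with the routine appeal to Corollary~\ref{cor:mainHSgenOdd} for odd $N$). The bookkeeping with the factor of $N$ and the verification that $N/2\geqslant 2$ are both handled correctly.
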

\begin{proof}
    If $N$ is odd then the lemma is precisely Corollary~\ref{cor:mainHSgenOdd}.
    Otherwise, suppose that $N$ is even.
    By Corollary~\ref{cor:mainHSgen}, we have
    $$\sum_{d \;|\; N} \phi(N/d) \operatorname{tr}((A^{\circ N/d})^d) + N|\mathfrak W_N(H)| +  \frac{N}{2}\mathbf 1^\top (A \circ A^\top)^{N/2} \mathbf 1 \in (1-\zeta)N \mathbb Z[\zeta].$$
    The lemma then follows from Lemma~\ref{lem:need2prove} and Lemma~\ref{lem:EulerW} using Proposition~\ref{pro:idealprelim} (f).
\end{proof}

In preparation for an application of Lemma~\ref{lem:trCongBurnside}, we state and prove the following technical lemmas.
The first of these is a slight strengthening of Remark~\ref{rem:weightsum}.

\begin{lemma}
    \label{lem:wtwCong}
    Let $n, k \in \mathbb N$, $q > 2$ be a power of $2$, $k$ be odd, $H \in \mathcal H_n(q)$, $A = (J-H)/(1-\zeta)$ and $\mathbf w \in W_k^\prime(\Gamma)$.
    Then 
    \[
    \frac{\operatorname{wt}(\mathbf w) + \operatorname{wt}(\mathbf w^{\top})}{1-\zeta} - \operatorname{wt}(\mathbf w) \in (1-\zeta)\mathbb Z[\zeta].
    \]
\end{lemma}
\begin{proof}
    Write $\mathbf w = (w_0,w_1,\dots,w_k)$ and $\operatorname{wt}(\mathbf w) = \prod_{i=1}^{k} \frac{1-\zeta^{e_i}}{1-\zeta}$.
    The lemma is trivial if any of the $e_i$ is equal to $0$.
    Without loss of generality, we can assume that 
    $1 \leqslant e_i \leqslant q-1$ for each $i \in \{1,\dots,k\}$.
    Hence, $(1-\zeta^{e_i})/(1-\zeta) = 1+\zeta + \dots + \zeta^{e_i-1}$.
    Furthermore, 
    \[
    \operatorname{wt}(\mathbf w^\top) = \prod_{i=1}^{k} \frac{1-\zeta^{q-e_i}}{1-\zeta}= \prod_{i=1}^{k} (1+\zeta + \dots + \zeta^{q-e_i-1}).
    \]
    Next, for $e \in \mathbb N$, observe that
    \[
    \zeta^e - e\zeta + e-1 \in (1-\zeta)^2\mathbb Z[\zeta].
    \]
    It follows that
    \[
    (1-\zeta^{e})/(1-\zeta) - \left (\binom{e}{2}\zeta -\binom{e-1}{2} +1 \right)\in (1-\zeta)^2\mathbb Z[\zeta].
    \]
    By Proposition~\ref{pro:idealprelim} (f), we find that
    \[
    \begin{cases}
        (1-\zeta^{e})/(1-\zeta)  \in (1-\zeta)^2\mathbb Z[\zeta], & \text{ if $e \equiv 0 \pmod 4$}; \\
        (1-\zeta^{e})/(1-\zeta) -1  \in (1-\zeta)^2\mathbb Z[\zeta], & \text{ if $e \equiv 1 \pmod 4$}; \\
        (1-\zeta^{e})/(1-\zeta) - (1+\zeta)  \in (1-\zeta)^2\mathbb Z[\zeta], & \text{ if $e \equiv 2 \pmod 4$}; \\
        (1-\zeta^{e})/(1-\zeta) -\zeta  \in (1-\zeta)^2\mathbb Z[\zeta], & \text{ if $e \equiv 3 \pmod 4$}. \\
    \end{cases}
    \]

    If $e_i \equiv 0 \pmod 4$ for some $i \in \{1,\dots,k\}$ then both $\operatorname{wt}(\mathbf w) + \operatorname{wt}(\mathbf w^\top) \in (1-\zeta)^2\mathbb Z[\zeta]$ and $\operatorname{wt}(\mathbf w) \in (1-\zeta)^2\mathbb Z[\zeta]$.
    Therefore, it remains to consider the case when  $e_i \not \equiv 0 \pmod 4$ for all $i \in \{1,\dots,k\}$.
    In this case, we have
    \[
    \operatorname{wt}(\mathbf w) - 1^a(1+\zeta)^b\zeta^c \in (1-\zeta)^2\mathbb Z[\zeta],
    \]
    for some $a,b,c \in \mathbb N \cup \{0\}$.
    At the same, we have 
    \[
        \operatorname{wt}(\mathbf w^\top) - 1^c(1+\zeta)^b\zeta^a \in (1-\zeta)^2\mathbb Z[\zeta].
    \]
    Hence,
    \[
    \operatorname{wt}(\mathbf w) + \operatorname{wt}(\mathbf w^\top) - (1+\zeta)^b(\zeta^c+\zeta^a) \in (1-\zeta)^2\mathbb Z[\zeta].
    \]
    Using Proposition~\ref{pro:idealprelim} (g), we see that, if $b \geqslant 1$ then both $\operatorname{wt}(\mathbf w) + \operatorname{wt}(\mathbf w^\top) \in (1-\zeta)^2\mathbb Z[\zeta]$ and $\operatorname{wt}(\mathbf w) \in (1-\zeta)^2\mathbb Z[\zeta]$.

    Lastly, we can assume that $b = 0$.
    It suffices to show that $(\zeta^c+\zeta^a) - (1-\zeta)\zeta^c \in (1-\zeta)^2\mathbb Z[\zeta]$.
    This follows since $a+c = k$, which is odd.
\end{proof}

The second technical lemma is a relation for certain traces.

\begin{lemma}
\label{lem:keyLemTrCong}
    Let $n, k \in \mathbb N$, $q > 2$ be a power of $2$, $k \geqslant 3$ be odd, $H \in \mathcal H_n(q)$, and $A = (J-H)/(1-\zeta)$.
    Then 
    \[
    \operatorname{tr} ((A^{\circ 2})^k) - \operatorname{tr} (A^k)^2+2(1-\zeta)^{-1} \operatorname{tr} (A^k) \in 2(1-\zeta)\mathbb Z[\zeta].
    \]
\end{lemma}
\begin{proof}
    Let $\Gamma = \Gamma(H)$ be the underlying graph of $H$ and let $W =W_k(\Gamma)$ be the set of closed $k$-walks in $\Gamma$.
    First, observe that $\operatorname{tr} ((A^{\circ 2})^k) = \sum_{\mathbf w \in W} \operatorname{wt}(\mathbf w)^2$ and $\operatorname{tr} (A^{k}) = \sum_{\mathbf w \in W} \operatorname{wt}(\mathbf w)$.
    Therefore, we have
        \begin{equation}
        \label{eqn:NI}
    \operatorname{tr} ((A^{\circ 2})^k) = \operatorname{tr} (A^{k})^2-2\sum_{\{\mathbf w_1,\mathbf w_2\} \in \binom{W}{2}} \operatorname{wt}(\mathbf w_1)\operatorname{wt}(\mathbf w_2).
    \end{equation}
    Next, by Proposition~\ref{pro:Idealcor}, for each $\mathbf w \in W$, either $\operatorname{wt}(\mathbf w) \in (1-\zeta)\mathbb Z[\zeta]$ or $\operatorname{wt}(\mathbf w)-1 \in (1-\zeta)\mathbb Z[\zeta]$.
    Therefore 
    \[
        \sum_{\{\mathbf w_1,\mathbf w_2\} \in \binom{W}{2}} \operatorname{wt}(\mathbf w_1)\operatorname{wt}(\mathbf w_2) - \binom{t}{2} \in (1-\zeta)\mathbb Z[\zeta],
    \]
    where $t$ is the number of walks $\mathbf w \in W$ satisfying $\operatorname{wt}(\mathbf w)-1 \in (1-\zeta)\mathbb Z[\zeta]$.
    Thus, in view of \eqref{eqn:NI}, it remains to show that $(1-\zeta)^{-1} \operatorname{tr} (A^k)- \binom{t}{2} \in (1-\zeta)\mathbb Z[\zeta]$.
    Let $T$ be the set of $\mathbf w \in W$ satisfying $\operatorname{wt}(\mathbf w)-1 \in (1-\zeta)\mathbb Z[\zeta]$.
    Then $|T| = t$.
    Furthermore, by Remark~\ref{rem:nonsimple}, we must have $T \subset W_k^\prime(\Gamma)$.

    Since $k$ is odd, for each $\mathbf w \in T$, the reverse walk $\mathbf w^{\top}$ is distinct from $\mathbf w$ and is also in $T$.
    Thus $t$ must be even and, moreover, $\binom{t}{2} - \frac{t}{2} \in (1-\zeta)\mathbb Z[\zeta]$.
    By Lemma~\ref{lem:trCong}, there exists a subset $U \subset W_k^\prime(\Gamma) \subset W$ such that
    \begin{equation}
        \label{eqn:1-ztr1}
        \operatorname{tr} (A^k) - \sum_{\mathbf w \in U}\left ( \operatorname{wt}(\mathbf w) + \operatorname{wt}(\mathbf w^{\top}) \right ) \in (1-\zeta)^2\mathbb Z[\zeta].
    \end{equation}
    By Remark~\ref{rem:weightsum}, for each $\mathbf w \in U$, we have $\operatorname{wt}(\mathbf w) + \operatorname{wt}(\mathbf w^{\top}) \in (1-\zeta)\mathbb Z[\zeta]$.
    Furthermore, by Lemma~\ref{lem:wtwCong}, we can write
    \begin{equation}
        \label{eqn:1-ztr2}
        \frac{\operatorname{wt}(\mathbf w) + \operatorname{wt}(\mathbf w^{\top})}{1-\zeta} - \operatorname{wt}(\mathbf w) \in (1-\zeta)\mathbb Z[\zeta].
    \end{equation}
    Combining \eqref{eqn:1-ztr1} and \eqref{eqn:1-ztr2} yields
    \[
    (1-\zeta)^{-1} \operatorname{tr} (A^k)- \sum_{\mathbf w \in U}\operatorname{wt}(\mathbf w) \in (1-\zeta)\mathbb Z[\zeta].
    \]
    Thus, since $|U\cap T| = t/2$, we obtain
    $(1-\zeta)^{-1} \operatorname{tr} (A^k)- \frac{t}{2} \in (1-\zeta)\mathbb Z[\zeta]$, as required.
\end{proof}

Now we are ready to establish a lemma, which acts as a key tool in the proof of the last part of the main theorems.

\begin{lemma}
\label{lem:quadtracCong}
Let $n \in \mathbb N$, $q > 2$ be a power of $2$, $H \in \mathcal H_n(q)$, $A = (J-H)/(1-\zeta)$, and
    let $N>2$ be an integer congruent to $2$ modulo $4$.
    Suppose that $\Gamma^{\mathrm{res}}(H)$ is an Euler graph.
    Then
$$\operatorname{tr} (A^{N/2})^2-2(1-\zeta)^{-1} \operatorname{tr} (A^{N/2}) + \operatorname{tr}(A^N) \in 2(1-\zeta) \mathbb Z[\zeta].$$
\end{lemma}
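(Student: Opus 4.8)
The plan is to parallel the argument of Lemma~\ref{lem:keyLemTrCong}, which treats the analogous statement for odd powers, but now to exploit the Euler-graph hypothesis to handle the obstruction coming from non-simple walks. Write $\Gamma = \Gamma(H)$, set $k = N/2$ (so $k$ is odd, since $N \equiv 2 \pmod 4$), and let $W = W_k(\Gamma)$ be the set of closed $k$-walks. As in the proof of Lemma~\ref{lem:keyLemTrCong}, we have $\operatorname{tr}(A^k) = \sum_{\mathbf w \in W} \operatorname{wt}(\mathbf w)$ and $\operatorname{tr}(A^{2k}) = \operatorname{tr}((A^{\circ 2})^k) + (\text{sum of cross terms})$. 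Wait—this is not quite right: $A^{2k}$ is not $(A^{\circ 2})^k$. The correct starting identity is
\[
\operatorname{tr}\big((A^{\circ 2})^k\big) = \operatorname{tr}(A^k)^2 - 2\sum_{\{\mathbf w_1,\mathbf w_2\}\in \binom{W}{2}} \operatorname{wt}(\mathbf w_1)\operatorname{wt}(\mathbf w_2),
\]
exactly equation~\eqref{eqn:NI}. By Lemma~\ref{lem:keyLemTrCong} applied with this odd $k$, we already know
\[
\operatorname{tr}\big((A^{\circ 2})^k\big) - \operatorname{tr}(A^k)^2 + 2(1-\zeta)^{-1}\operatorname{tr}(A^k) \in 2(1-\zeta)\mathbb Z[\zeta].
\]
So the real content of the present lemma is to show $\operatorname{tr}(A^N) - \operatorname{tr}((A^{\circ 2})^{N/2}) \in 2(1-\zeta)\mathbb Z[\zeta]$ — i.e., that the difference between summing weights of closed $N$-walks and summing squared weights of closed $(N/2)$-walks lies in $2(1-\zeta)\mathbb Z[\zeta]$ — and then add the two congruences.

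The bridge between $\operatorname{tr}(A^N)$ and $\operatorname{tr}((A^{\circ 2})^{N/2})$ should come from the dihedral-group machinery of Section~\ref{sec:graphs}, specialized to $N \equiv 2 \pmod 4$. The reflection $s$ with two fixed points contributes $\mathbf 1^\top (A\circ A^\top)^{N/2}\mathbf 1 = \mathbf 1^\top (A^{\circ 2})^{N/2}\mathbf 1$ via Lemma~\ref{lem:fixToTrace}(ii), and Lemma~\ref{lem:need2prove} tells us this term lies in $2(1-\zeta)\mathbb Z[\zeta]$ because $\Gamma^{\text{res}}(H)$ is an Euler graph. Meanwhile $\operatorname{tr}(A^N)$ is the $r$-rotation side, which appears in the weighted Burnside identity of Lemma~\ref{lem:weightedBurnside}. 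Using the Euler-graph refinement, Lemma~\ref{lem:trCongBurnside} gives $\sum_{d\,|\,N}\varphi(N/d)\operatorname{tr}((A^{\circ N/d})^d) \in (1-\zeta)N\mathbb Z[\zeta]$; since $N \equiv 2 \pmod 4$, the $2$-adic valuation of $N$ is exactly $1$, so $(1-\zeta)N\mathbb Z[\zeta]$ is contained in $2(1-\zeta)\mathbb Z[\zeta]/(\text{odd part})$, and after clearing the odd part of $N$ (which is a unit modulo $2(1-\zeta)\mathbb Z[\zeta]$) one extracts a congruence relating the $d = N$ term $\operatorname{tr}(A^N)$, the $d = N/2$ term $\operatorname{tr}((A^{\circ 2})^{N/2})$, and lower-order terms. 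The lower-order terms $\operatorname{tr}((A^{\circ N/d})^d)$ with $d$ a proper divisor of $N/2$ all lie in $(1-\zeta)\mathbb Z[\zeta]$ by Lemma~\ref{lem:trCong(1-zeta)} (and in fact one wants them in $2(1-\zeta)\mathbb Z[\zeta]$, which should follow by pairing walks with their reverses exactly as in Corollary~\ref{cor:UpartWalk} and Remark~\ref{rem:weightsum}, since each such trace is a sum over a set of walks closed under reversal).

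Assembling: from Lemma~\ref{lem:trCongBurnside} one isolates $\varphi(1)\operatorname{tr}(A^N) + \varphi(2)\operatorname{tr}((A^{\circ 2})^{N/2})$, i.e. $\operatorname{tr}(A^N) + \operatorname{tr}((A^{\circ 2})^{N/2})$, modulo $2(1-\zeta)\mathbb Z[\zeta]$, after checking all other summands vanish there. Since $2x \equiv 0$ would let us replace $+$ by $-$, but here it is cleaner to note $\operatorname{tr}(A^N) + \operatorname{tr}((A^{\circ 2})^{N/2}) \equiv 0$ gives $\operatorname{tr}(A^N) \equiv -\operatorname{tr}((A^{\circ 2})^{N/2}) \equiv \operatorname{tr}((A^{\circ 2})^{N/2}) \pmod{2(1-\zeta)\mathbb Z[\zeta]}$ provided $2\operatorname{tr}((A^{\circ 2})^{N/2}) \in 2(1-\zeta)\mathbb Z[\zeta]$, which holds since $\operatorname{tr}((A^{\circ 2})^{N/2})\in(1-\zeta)\mathbb Z[\zeta]$ by Lemma~\ref{lem:trCong(1-zeta)}. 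Then combining with the displayed consequence of Lemma~\ref{lem:keyLemTrCong} yields
\[
\operatorname{tr}(A^{N/2})^2 - 2(1-\zeta)^{-1}\operatorname{tr}(A^{N/2}) + \operatorname{tr}(A^N) \in 2(1-\zeta)\mathbb Z[\zeta],
\]
as desired. The main obstacle I anticipate is the bookkeeping in the second paragraph: correctly tracking which rotation- and reflection-orbit contributions land in $2(1-\zeta)\mathbb Z[\zeta]$ versus merely $(1-\zeta)\mathbb Z[\zeta]$, and verifying that the odd part of $N$ can be cancelled — this requires care because Lemma~\ref{lem:trCongBurnside} only gives membership in $(1-\zeta)N\mathbb Z[\zeta]$, and one must argue that the $2$-primary refinement survives, using $v_2(N) = 1$ together with the Euler hypothesis feeding through Lemmas~\ref{lem:need2prove} and \ref{lem:EulerW} to upgrade the reflection terms to $2(1-\zeta)\mathbb Z[\zeta]$.
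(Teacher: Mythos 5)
Your overall route is the same as the paper's: apply Lemma~\ref{lem:keyLemTrCong} with $k=N/2$ (odd because $N\equiv 2\pmod 4$), use Lemma~\ref{lem:trCongBurnside} to deduce $\operatorname{tr}(A^N)+\operatorname{tr}((A^{\circ 2})^{N/2})\in 2(1-\zeta)\mathbb Z[\zeta]$, and combine the two congruences; the sign manipulation you worry about at the end is harmless, since one can simply substitute $\operatorname{tr}((A^{\circ 2})^{N/2})\equiv-\operatorname{tr}(A^N)$ and multiply the resulting congruence by $-1$. The one place where your plan would actually break down is precisely the spot you flag as the anticipated obstacle: disposing of the summands $\varphi(N/d)\operatorname{tr}((A^{\circ N/d})^d)$ for divisors $d\notin\{N,N/2\}$. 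You propose to show that each such \emph{trace} lies in $2(1-\zeta)\mathbb Z[\zeta]$ by pairing walks with their reverses, but that is false in general. For example, take $d=2$: the off-diagonal part of $\operatorname{tr}((A^{\circ N/2})^2)=\sum_{i,j}A_{i,j}^{N/2}A_{j,i}^{N/2}$ equals twice a sum that is congruent, modulo $(1-\zeta)$, to the number of edges of $\Gamma^{\text{res}}(H)$, and an Euler graph may have an odd number of edges (a triangle residue graph is realisable already for $n=3$, $q=4$), in which case this term does not lie in $2(1-\zeta)\mathbb Z[\zeta]$.

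The missing observation, which is what the paper uses, is that $\varphi(N/d)$ is \emph{even} for every divisor $d$ of $N$ other than $N$ and $N/2$: since $N\equiv 2\pmod 4$, for such $d$ the quotient $N/d$ is either an odd integer greater than $1$ or twice one, and in both cases $\varphi(N/d)$ is even. Hence it suffices to know that each remaining trace lies merely in $(1-\zeta)\mathbb Z[\zeta]$, which follows from the congruence $\operatorname{tr}((A^{\circ k})^d)-\operatorname{tr}(A^d)\in(1-\zeta)\mathbb Z[\zeta]$ (each weight satisfies $\operatorname{wt}(\mathbf w)^k\equiv\operatorname{wt}(\mathbf w)\bmod(1-\zeta)\mathbb Z[\zeta]$) together with Lemma~\ref{lem:trCong(1-zeta)}. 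The same bridging congruence is also needed where you cite Lemma~\ref{lem:trCong(1-zeta)} directly for $\operatorname{tr}((A^{\circ 2})^{N/2})$, since that lemma as stated only covers ordinary powers of $A$. With these repairs your argument coincides with the paper's proof.
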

\begin{proof}
    By Lemma~\ref{lem:keyLemTrCong}, using $k = N/2$, it suffices to show that
    \[
    \operatorname{tr}(A^N) + \operatorname{tr} ((A^{\circ 2})^{N/2}) \in 2(1-\zeta) \mathbb Z[\zeta].
    \]
    By Lemma~\ref{lem:trCongBurnside}, we have
    \begin{align}
    \label{eqn:sumtr}
        \sum_{d \;|\; N} \phi(N/d) \operatorname{tr}((A^{\circ N/d})^d) \in 2(1-\zeta) \mathbb Z[\zeta].
    \end{align}
    Except for $d = N$ and $d = N/2$, for each divisor $d$ of $N$, we have that $\phi(N/d)$ is even.
    Observe that 
    \[
    \operatorname{tr}((A^{\circ k})^d) - \operatorname{tr}(A^d) \in (1-\zeta) \mathbb Z[\zeta]
    \]
    for all $k$ and $d$ in $\mathbb N$.
    Indeed, $\operatorname{tr}(A^d)$ is the sum of the weights of the walks in $W_d(\Gamma(H))$ and  $\operatorname{tr}((A^{\circ k})^d)$ is the sum of their $k$th powers.
    For each $\mathbf w \in W_d(\Gamma(H))$, we have $\operatorname{wt}(\mathbf w)^k - \operatorname{wt}(\mathbf w) \in (1-\zeta) \mathbb Z[\zeta]$.
    Hence, using Lemma~\ref{lem:trCong} together with \eqref{eqn:sumtr}, it follows that 
    \[
    \operatorname{tr}(A^N) + \operatorname{tr} ((A^{\circ 2})^{N/2}) \in 2(1-\zeta) \mathbb Z[\zeta],
    \]
    as required.
\end{proof}

For $H \in \mathcal H_n(q)$ and $A = (J-H)/(1-\zeta)$, we have the following relation between coefficients of the characteristic polynomials of $H$ and $A$.

\begin{lemma}
     \label{lem:a_kAndb_k}
     Let $n \in \mathbb N$ be odd, $q > 2$ be a power of $2$, $H \in \mathcal H_n(q)$, and $A = (J-H)/(1-\zeta)$.
    Suppose that $\Gamma^{\mathrm{res}}(H)$ is an Euler graph.
    Write $\operatorname{Char}_{H}(x)=\sum_{i=0}^n a_i 
    x^{n-i}$ and $\operatorname{Char}_A(x)=\sum_{i=0}^n b_i x^{n-i}$.
    Then for all $j \in \{1,\dots ,\frac{n-1}{2} \}
    $, we have
$$ b_{2j} - \frac{a_{2j+1}}{(1-\zeta)^{2j}}
\in (1-\zeta)^2 \mathbb Z[\zeta]; $$
$$ b_{2j-1}- \frac{a_{2j+1}+a_{2j} + a_{2j-1}(a_3 + a_2 - a_1 - n)}{(1-\zeta)^{2j-1}} \in (1-\zeta)^2\mathbb Z[\zeta]. $$
\end{lemma}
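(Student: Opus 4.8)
The plan is to start from the master relation in Lemma~\ref{lem:matdet}, applied with this $A = (J-H)/(1-\zeta)$, so that
$$a_j = (\zeta-1)^j\left( b_j + \frac{1}{1-\zeta}\sum_{i=1}^j b_{j-i}\,\mathbf 1^\top A^{i-1}\mathbf 1\right)$$
for every $j$. Solving for $b_j$ and multiplying through, one gets $b_j (1-\zeta)^{2j} \cdot (\text{unit}) = (1-\zeta)^j a_j - (1-\zeta)^{j-1}\sum_{i=1}^j b_{j-i}\mathbf 1^\top A^{i-1}\mathbf 1$; more usefully, $b_j - a_j/(\zeta-1)^j = -\frac{1}{1-\zeta}\sum_{i=1}^j b_{j-i}\mathbf 1^\top A^{i-1}\mathbf 1$. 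The whole proof is then a matter of controlling the summands $b_{j-i}\mathbf 1^\top A^{i-1}\mathbf 1$ modulo $(1-\zeta)^2\mathbb Z[\zeta]$, using the Euler-graph hypothesis.

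The key inputs are: Lemma~\ref{lem:sumEulerpower}, which gives $\mathbf 1^\top A\mathbf 1 \in (1-\zeta)\mathbb Z[\zeta]$ and $\mathbf 1^\top A^k\mathbf 1 \in (1-\zeta)^2\mathbb Z[\zeta]$ for $k\geqslant 2$; Corollary~\ref{cor:oddbi}, which gives $b_i \in (1-\zeta)\mathbb Z[\zeta]$ for odd $i$; and Corollary~\ref{cor:2powerCoeffs} (with $n$ odd, $q$ a power of $2$), which gives $a_i \in \rho^{\lfloor i/2\rfloor}\mathbb Z[\zeta+\zeta^{-1}] \subseteq (1-\zeta)^{2\lfloor i/2\rfloor}\mathbb Z[\zeta]$. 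For the first claimed congruence, take $j=2k$ with $k\geqslant 1$. In $\frac{1}{1-\zeta}\sum_{i=1}^{2k}b_{2k-i}\mathbf 1^\top A^{i-1}\mathbf 1$, the $i=1$ term is $\frac{1}{1-\zeta}b_{2k-1}\mathbf 1^\top\mathbf 1 = \frac{n}{1-\zeta}b_{2k-1}$, which lies in $(1-\zeta)^2\mathbb Z[\zeta]$ since $b_{2k-1}\in(1-\zeta)\mathbb Z[\zeta]$ (odd index) and $n$ is an odd integer hence a unit times $1$ — wait, $n$ is odd so $n\in\mathbb Z[\zeta]^\times$ only mod $(1-\zeta)$; but $b_{2k-1}\in(1-\zeta)\mathbb Z[\zeta]$ and we divide by one factor of $(1-\zeta)$, so the $i=1$ term is in $n\cdot\mathbb Z[\zeta]$, and we still need one more factor. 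Here one uses instead that $b_{2k-1}/(1-\zeta)$ is controlled: actually the cleaner route is that $a_{2k+1}/(1-\zeta)^{2k}$ and $b_{2k}$ differ by $-\frac{1}{1-\zeta}\sum_{i=1}^{2k}b_{2k-i}\mathbf 1^\top A^{i-1}\mathbf 1$ only after re-indexing from $j=2k+1$; so I would instead apply Lemma~\ref{lem:matdet} with $j=2k+1$, giving $a_{2k+1}=(\zeta-1)^{2k+1}(b_{2k+1}+\frac{1}{1-\zeta}\sum_{i=1}^{2k+1}b_{2k+1-i}\mathbf 1^\top A^{i-1}\mathbf 1)$, and pick off the coefficient structure so that the displayed combination is $b_{2k}$ plus higher-order terms. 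The $i=1$ term is $\frac{1}{1-\zeta}b_{2k}\mathbf 1^\top\mathbf 1$, i.e. essentially $b_{2k}$ itself after dividing by $(\zeta-1)^{2k+1}$ and rearranging — which is exactly where the leading $b_{2j}$ on the left comes from. The remaining terms $i\geqslant 2$ have $\mathbf 1^\top A^{i-1}\mathbf 1\in(1-\zeta)^2\mathbb Z[\zeta]$, so after dividing by one $(1-\zeta)$ and by $(1-\zeta)^{2k}$ we land in $(1-\zeta)^2\mathbb Z[\zeta]$ provided $b_{2k+1-i}$ contributes no negative powers, which holds since all $b$'s are algebraic integers. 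For the odd-index claim ($j=2j-1$), the same expansion from $a_{2j+1}$ keeps the $i=1,2,3$ terms explicitly (these produce $a_{2j+1}+a_{2j}+a_{2j-1}(\cdots)$ after substituting the known relations $b_0=1$, $b_1 = -n/(\text{unit})$, $b_2,b_3$ from Newton/Remark~\ref{rem:a0a1a2} and the $j=1,2,3$ cases of this very lemma) and absorbs $i\geqslant 4$ into $(1-\zeta)^2\mathbb Z[\zeta]$.

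Concretely I would proceed as follows. First, record $\mathbf 1^\top\mathbf 1 = n$ and the values of $b_0,b_1,b_2,b_3$ in terms of $a_0,\dots,a_3$ and $n$ via Newton's identities together with Lemma~\ref{lem:matdet} at $j=1,2,3$, reducing modulo $(1-\zeta)^2\mathbb Z[\zeta]$ using $a_i\in(1-\zeta)^{2\lfloor i/2\rfloor}\mathbb Z[\zeta]$ and oddness of $n$. Second, write Lemma~\ref{lem:matdet} for $j=2k+1$ and isolate $(1-\zeta)^{2k}$-worth of the expression, splitting the inner sum into $i=1$ (the ``main'' term producing $b_{2k}$ or $b_{2k-1}$ on the left), the low-order terms $i=2,\dots$ that must be kept for the odd case, and the tail $i$ large. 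Third, bound the tail: each tail summand is $(1-\zeta)^{-1}b_{2k+1-i}\mathbf 1^\top A^{i-1}\mathbf 1$, and since $i-1\geqslant 2$ in the tail, Lemma~\ref{lem:sumEulerpower} gives $\mathbf 1^\top A^{i-1}\mathbf 1\in(1-\zeta)^2\mathbb Z[\zeta]$, so after dividing out the single $(1-\zeta)$ we still have a factor $(1-\zeta)$, and dividing the whole displayed identity by $(1-\zeta)^{2j}$ (resp. $(1-\zeta)^{2j-1}$) keeps us inside $(1-\zeta)^2\mathbb Z[\zeta]$ once we also invoke $a$-divisibility for the leftover pieces. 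Fourth, for the $i=1$ term use $\mathbf 1^\top A^0\mathbf 1 = n$ and oddness of $n$ (so $n$ is a unit modulo $(1-\zeta)$, and $1/(1-\zeta)\cdot n\cdot(\text{higher }b)$ is handled), together with Corollary~\ref{cor:oddbi} to kill the odd-index $b$'s that appear with a $1/(1-\zeta)$. Assemble.

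The main obstacle I anticipate is bookkeeping the exact power of $(1-\zeta)$ at each term: several terms are borderline, sitting in $(1-\zeta)^2\mathbb Z[\zeta]$ only after one cancels the single $1/(1-\zeta)$ against a factor coming either from $\mathbf 1^\top A^{\geqslant 2}\mathbf 1$ (Lemma~\ref{lem:sumEulerpower}, needs $k\geqslant 2$) or from $b_{\text{odd}}$ (Corollary~\ref{cor:oddbi}) or from $a_{\text{even}}$ (Corollary~\ref{cor:2powerCoeffs}), and getting the case analysis of small indices ($j=1,2,3$, where the stated formulas degenerate or the ``low-order'' terms $a_1,a_2,a_3,n$ genuinely appear) to line up with the general pattern. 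Everything else is a routine manipulation of Lemma~\ref{lem:matdet}.
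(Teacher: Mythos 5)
Your plan follows the paper's proof almost exactly: both rest on Lemma~\ref{lem:matdet} combined with Lemma~\ref{lem:sumEulerpower}, Corollary~\ref{cor:oddbi}, and the parity of $n$, with the low-index data $a_1,a_2,a_3,n$ extracted explicitly. However, two of your intermediate claims are wrong as stated and sit exactly where the substance of the lemma lives, so they need repair before the plan goes through.

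First, you assert that the terms $\mathbf 1^\top A^{i-1}\mathbf 1$ for $i\geqslant 2$ lie in $(1-\zeta)^2\mathbb Z[\zeta]$. Lemma~\ref{lem:sumEulerpower} gives this only for $i-1\geqslant 2$; for $i=2$ one only has $\mathbf 1^\top A\mathbf 1\in(1-\zeta)\mathbb Z[\zeta]$. In the expansion of $a_{2j+1}$ this is harmless, because the accompanying coefficient $b_{2j-1}$ has odd index and Corollary~\ref{cor:oddbi} supplies the missing factor of $(1-\zeta)$. But the second congruence cannot come from ``the same expansion from $a_{2j+1}$'' as you write: there the coefficient of $b_{2j-1}$ is $\mathbf 1^\top A\mathbf 1/(1-\zeta)$, which need not be a unit, so $b_{2j-1}$ cannot be isolated. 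You must expand $a_{2j}$ (consistent with $a_{2j}$ appearing in your target numerator), whose $i=1$ term has the unit coefficient $n$; and there the $i=2$ term is $b_{2j-2}\mathbf 1^\top A\mathbf 1$ with $b_{2j-2}$ of \emph{even} index, and it is not negligible --- it is precisely the source of the summand $a_{2j-1}(a_3+a_2+a_1+n)/(1-\zeta)^{2j-1}$. To finish one must prove $\mathbf 1^\top A\mathbf 1 - (a_3+a_2+a_1+n)/(1-\zeta)\in(1-\zeta)^2\mathbb Z[\zeta]$ (from the $j=2,3$ instances of Lemma~\ref{lem:matdet}, the identity $b_1=(n+a_1)/(\zeta-1)$, and $a_2\in(1-\zeta)^2\mathbb Z[\zeta]$) and substitute $b_{2j-2}\equiv a_{2j-1}/(1-\zeta)^{2j-2}$ from the already-established even-index congruence. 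You gesture at this (``substituting the known relations \dots\ $b_2,b_3$ from Newton''), but it is the substantive step of the proof, not bookkeeping. Second, ``$n$ is a unit modulo $(1-\zeta)$'' is too weak for what you use it for: you need $n\equiv 1\pmod{(1-\zeta)^2\mathbb Z[\zeta]}$, which holds because $n$ is odd and $2\in(1-\zeta)^2\mathbb Z[\zeta]$ (this is where $f\geqslant 2$ enters); the same fact is what licenses all the sign flips needed to land on $+a_1+n$ rather than $-a_1-n$.
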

\begin{proof}
   By Lemma~\ref{lem:matdet}, we can write
   \begin{equation*}
       \frac{a_{2j+1}}{(1-\zeta)^{2j}}=(\zeta-1)b_{2j+1} - nb_{2j} - \sum_{i=2}^{2j+1} b_{2j+1-i} \mathbf{1}^\top A^{i-1}\mathbf{1}.
   \end{equation*}
   By Corollary~\ref{cor:oddbi}, we know that $b_{2i+1} \in (1-\zeta)\mathbb Z[\zeta]$ for all $i \in \{1,\dots,(n-1)/2\}$.
   By Lemma~\ref{lem:sumEulerpower}, we have
   $\mathbf{1}^\top A\mathbf{1} \in (1-\zeta)\mathbb Z[\zeta]$ and  $\mathbf{1}^\top A^i\mathbf{1} \in (1-\zeta)^2\mathbb Z[\zeta]$ for all $i \geqslant 2$.
   Putting to above together with the fact that $2\mathbb Z[\zeta] \subset (1-\zeta)^2\mathbb Z[\zeta]$ (see Proposition~\ref{pro:idealprelim} (f)) yields
   \begin{equation}
       \label{eqn1}
   b_{2j} - \frac{a_{2j+1}}{(1-\zeta)^{2j}} \in 
   (1- \zeta)^2 \mathbb Z[\zeta].
   \end{equation}
   Next, using Lemma~\ref{lem:matdet}, we 
   have
   $a_2=(1-\zeta)^2 b_2 + (1-\zeta)b_1 n + (1-
   \zeta)b_0 \mathbf 1^\top A \mathbf 1$ and 
   $$a_3=(\zeta-1)^3 b_3 - (1-\zeta)^2 b_2 n - (1-
   \zeta)^2 b_1\mathbf 1^\top A \mathbf 1 - (1-
   \zeta)^2 b_0 \mathbf 1^\top A^2 \mathbf 1.$$
   Since $q>2$, each diagonal entry of $A$ belongs to $(1-\zeta)\mathbb Z[\zeta]$, we find that $\operatorname{tr} (A) = -b_1 \in (1-\zeta)\mathbb Z[\zeta]$.
   By Lemma~\ref{lem:armod}, we have $a_2 \in (1-\zeta)^2\mathbb Z[\zeta]$.
   Thus, using Lemma~\ref{lem:sumEulerpower} and Proposition~\ref{pro:idealprelim} (f), we can write
   \begin{equation}
       \frac{a_3+a_2}{1-\zeta} -(b_1 + \mathbf 1^\top A 
   \mathbf 1) \in (1-\zeta)^2 \mathbb Z[\zeta]. \label{eqn:fI}
   \end{equation}
   Using \eqref{eqn1}, we have
   \begin{equation}
        b_{2j-2} - \frac{a_{2j-1}}{(1-\zeta)^{2j-2}} \in 
(1-\zeta)^2 \mathbb Z[\zeta]. \label{eqn:fII}
   \end{equation}
   Take the trace of $A = (J-H)/(1-\zeta)$ to obtain the identity $b_1 = (n+a_1)/(\zeta-1)$.
   Combining this identity with \eqref{eqn:fI} and \eqref{eqn:fII}, we obtain
\begin{equation}
\label{eqn:fIII}
    \frac{(a_2+a_3)a_{2j-1}}{(1-\zeta)^{2j-1}} - b_{2j-2} \left (\mathbf 1^\top A 
   \mathbf 1 - \frac{a_1+n}{1-\zeta}\right ) \in (1-\zeta)^2 \mathbb Z[\zeta].
\end{equation}
Using Lemma~\ref{lem:matdet} and Lemma~\ref{lem:sumEulerpower}, we can write
   \begin{equation*}
       \frac{a_{2j+1}+a_{2j}}{(1-\zeta)^{2j-1}} - \left ( (1-n)b_{2j} - nb_{2j-1}-b_{2j-2}\mathbf{1}^\top A\mathbf{1} \right ) \in (1-\zeta)^2\mathbb Z[\zeta].
   \end{equation*}
   Apply Proposition~\ref{pro:idealprelim} (f) to obtain
      \begin{equation*}
       \frac{a_{2j+1}+a_{2j}}{(1-\zeta)^{2j-1}} - \left (  b_{2j-1}-b_{2j-2}\mathbf{1}^\top A\mathbf{1} \right ) \in (1-\zeta)^2\mathbb Z[\zeta].
   \end{equation*}
Combining with \eqref{eqn:fII} and \eqref{eqn:fIII}, yields
   \begin{equation*}
       b_{2j-1} - \frac{a_{2j+1}+a_{2j}+a_{2j-1}(a_3+a_2-a_1-n)}
       {(1-\zeta)^{2j-1}} \in (1-\zeta)^2\mathbb Z[\zeta]. \qedhere
   \end{equation*}
\end{proof}

For a positive integer $a$, the $2$\textbf{-adic valuation} $\nu_2(a)$ of $a$ is the multiplicity of $2$ in the prime factorisation of $a$.
The $2$-adic valuation of $0$ is defined to be $\infty$.
For an integer $b$ relatively prime to $a$, the $2$-adic valuation $\nu_2(a/b)$ of $a/b$ is defined as
$-\nu_2(b)$ if $b$ is even and $\nu_2(a)$ otherwise.
We now state a couple of lemmas about the $2$-adic valuation.

\begin{lemma}[{\cite[Lemma 3.11]{GreavesYatsyna19}}]
    \label{lem:2valbound}
    Let $l$ be a positive integer and let $m_1,m_2,\dots , 
    m_l$ be nonnegative integers having a positive sum.
    Let $m \in \{m_i\ |\ m_i \neq 0 \}$.
    Then
$$\nu_2 \left( \frac{(m_1+m_2+\dots +m_l-1)!}{m_1!m_2!\dots m_l!} \right) \geqslant -\nu_2(m).$$
\end{lemma}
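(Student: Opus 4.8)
The plan is to reduce the statement to a direct computation with Legendre's formula for the $2$-adic valuation of a factorial. Recall that for a nonnegative integer $a$ one has $\nu_2(a!) = a - s_2(a)$, where $s_2(a)$ denotes the sum of the binary digits of $a$. Write $S = m_1 + m_2 + \dots + m_l$, which is positive by hypothesis, and fix an index $i_0$ with $m := m_{i_0} \ne 0$. The quantity whose valuation we must bound is the multinomial-type expression $(S-1)!/(m_1! \cdots m_l!)$, so applying Legendre's formula gives
\[
\nu_2\!\left( \frac{(S-1)!}{m_1! \cdots m_l!} \right) = \nu_2((S-1)!) - \sum_{j=1}^l \nu_2(m_j!) = (S-1) - s_2(S-1) - \sum_{j=1}^l \bigl( m_j - s_2(m_j) \bigr).
\]
Since $\sum_j m_j = S$, the linear terms collapse to $(S-1) - S = -1$, and we are left with $\nu_2 = -1 - s_2(S-1) + \sum_{j=1}^l s_2(m_j)$.

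The remaining task is therefore purely combinatorial: I must show that $\sum_{j=1}^l s_2(m_j) - s_2(S-1) - 1 \geqslant -\nu_2(m)$, i.e.
\[
\sum_{j=1}^l s_2(m_j) \geqslant s_2(S-1) + 1 - \nu_2(m).
\]
To handle this, I would first single out the term $s_2(m)$ on the left and note that $s_2(m) \geqslant 1 + \nu_2(m)$, since the binary expansion of $m$ has its lowest set bit in position $\nu_2(m)$, so $m \geqslant 2^{\nu_2(m)}$ and $s_2(m) \geqslant 1$ with an extra contribution available; more carefully, writing $m = 2^{\nu_2(m)} m'$ with $m'$ odd, we have $s_2(m) = s_2(m') \geqslant 1$, and combined with the standard subadditivity fact below this will suffice. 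For the other terms, I would use the subadditivity of the binary digit sum: $s_2(a+b) \leqslant s_2(a) + s_2(b)$ for all nonnegative $a,b$, which follows by induction on the number of carries in binary addition. Applying this repeatedly to $S = \sum_j m_j$ yields $s_2(S) \leqslant \sum_j s_2(m_j)$, and since $S \geqslant 1$ we have $s_2(S-1) \leqslant s_2(S) - 1 + 2\nu_2(S)$ — actually the cleanest route is the identity $s_2(S-1) = s_2(S) - 1 + 2\nu_2(S)$ when $S \geqslant 1$, obtained by analysing the borrow chain when subtracting $1$.

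Putting the pieces together: from $s_2(S) \leqslant \sum_j s_2(m_j)$ and the identity for $s_2(S-1)$, together with $\nu_2(S) \geqslant 0$ and $s_2(m) \geqslant 1$, I would derive the required inequality, taking extra care in the case $\nu_2(m) > 0$ where I need the sharper bound — there I would observe that $2^{\nu_2(m)} \mid S$ cannot be assumed, so instead I track the contribution of $m$'s trailing zeros directly: in fact the inequality $\sum_j s_2(m_j) \geqslant s_2(S) \geqslant s_2(S-1) + 1 - 2\nu_2(S)$ is too weak, so the better approach is to bound $\nu_2\big((S-1)!/\prod m_j!\big)$ against $\nu_2\big((S-1)!/((S-m)! \, m!)\big) = \nu_2\binom{S-1}{m-1} - \nu_2(m) + \nu_2(S)$ using that the multinomial coefficient is an integer multiple of $\binom{S-1}{m-1}\binom{S-m}{m_2}\cdots$, hence $\nu_2\big((S-1)!/\prod m_j!\big) \geqslant \nu_2\big((S-1)!/((S-m)!\,m!)\big)$. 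The main obstacle is exactly this last step: one must verify that grouping all the indices other than $i_0$ together can only decrease (or preserve) the $2$-adic valuation of the multinomial coefficient, which follows from the factorisation of a multinomial coefficient into a product of binomial coefficients, each of which is a nonnegative integer, so no $2$'s are lost. Once that reduction is in place, the bound $\nu_2\binom{S-1}{m-1} \geqslant \nu_2(m) - \nu_2(S) \geqslant \nu_2(m) - \nu_2(S)$ — in fact the clean identity $\binom{S-1}{m-1} = \frac{m}{S}\binom{S}{m}$ shows $\nu_2\binom{S-1}{m-1} = \nu_2(m) - \nu_2(S) + \nu_2\binom{S}{m} \geqslant \nu_2(m) - \nu_2(S) \geqslant -\nu_2(m) + (\text{nonneg})$ — gives $\nu_2\big((S-1)!/\prod m_j!\big) \geqslant \nu_2\binom{S-1}{m-1} \geqslant \nu_2(m) - \nu_2(S)$, and since we actually want the weaker bound $\geqslant -\nu_2(m)$ this is more than enough once one checks $\nu_2(S) \leqslant 2\nu_2(m)$ is not needed — rather, I will simply use $\binom{S-1}{m-1} = \frac{m}{S}\binom{S}{m}$ directly to get $\nu_2 \geqslant \nu_2(m) - \nu_2(S)$ and observe $-\nu_2(m) \leqslant \nu_2(m) - \nu_2(S)$ fails in general, so the honest statement to prove is the sharper $\nu_2\binom{S}{m} \geqslant \nu_2(S) - \nu_2(m) - \nu_2(m) $... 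I would instead fall back to Kummer's theorem, which states $\nu_2\binom{S}{m}$ equals the number of carries when adding $m$ and $S-m$ in base $2$; since the lowest $\nu_2(m)$ bits of $m$ are zero while those of $S - m \equiv S \pmod{2^{\nu_2(m)}}$ and... the cleanest endgame is Kummer: $\nu_2\binom{S-1}{m-1} \geqslant 0$ always, and the factor $\frac{m}{S}$ contributes $\nu_2(m) - \nu_2(S) \geqslant -\nu_2(S)$, so I will finish by showing separately that $\nu_2\binom{S-1}{m-1} \geqslant \nu_2(S) - \nu_2(m)$ — equivalently one carry for each of the bottom bits — via Kummer's carry count applied to $(m-1) + (S-m) = S-1$. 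This carry-counting argument is the technical heart, and I expect it to be the step requiring the most care; everything else is bookkeeping with Legendre's and Kummer's formulas.
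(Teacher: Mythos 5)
The paper offers no proof of this lemma; it is quoted verbatim from \cite[Lemma 3.11]{GreavesYatsyna19}, so your attempt has to stand on its own. The correct idea is actually buried in your text: writing $S=m_1+\cdots+m_l$ and absorbing one factor of $m$ via $m!=m\cdot(m-1)!$ gives
$\frac{(S-1)!}{m_1!\cdots m_l!}=\frac{1}{m}\cdot\frac{(S-1)!}{m_1!\cdots(m-1)!\cdots m_l!}$,
and the second factor is a multinomial coefficient (its bottom entries now sum to $S-1$), hence a nonnegative integer with nonnegative $2$-adic valuation, so the bound $\geqslant-\nu_2(m)$ follows in one line. Your ``grouping'' observation together with $\binom{S-1}{m-1}=\frac{m}{S}\binom{S}{m}$ is exactly this argument; had you stopped there you would be done.

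As written, however, the proposal does not close. The pivotal identity you state, $\nu_2\bigl((S-1)!/((S-m)!\,m!)\bigr)=\nu_2\binom{S-1}{m-1}-\nu_2(m)+\nu_2(S)$, is wrong: since $(S-1)!/(m!\,(S-m)!)=\binom{S-1}{m-1}/m$, the correct value is $\nu_2\binom{S-1}{m-1}-\nu_2(m)$, with no $\nu_2(S)$ term, and the lemma then follows immediately from $\nu_2\binom{S-1}{m-1}\geqslant 0$. The sign slip propagates and leads you to designate as the ``technical heart'' the inequality $\nu_2\binom{S-1}{m-1}\geqslant\nu_2(S)-\nu_2(m)$, which is false: for $S=8$ and $m=2$ one has $\nu_2\binom{7}{1}=0$ while $\nu_2(S)-\nu_2(m)=2$. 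So the step you defer to Kummer's theorem cannot be carried out. (The abandoned digit-sum branch also contains an error --- $s_2(S-1)=s_2(S)-1+\nu_2(S)$, not $+2\nu_2(S)$ --- and in any case the target inequality $\sum_j s_2(m_j)\geqslant s_2(S-1)+1-\nu_2(m)$ is, by Legendre, just a restatement of the lemma, so that reduction makes no progress.) Replacing the endgame by the one-line absorption identity above completes the proof.
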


\begin{lemma}
    \label{lem:2adicdiff}
    Let $l$ be a positive integer and let $m_1,m_2,\dots, m_l$ be nonnegative integers having a positive sum $m$.
    Then
$$ \nu_2 \left( \frac{m!}{m_1!m_2!\dots m_l!} \right)=\nu_2 \left( \frac{(2m)!}{(2m_1)!(2m_2)!\dots (2m_l)!} \right).$$
\end{lemma}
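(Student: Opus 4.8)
The plan is to reduce this identity to Legendre's formula for the $2$-adic valuation of a factorial. Recall that for any nonnegative integer $n$ one has $\nu_2(n!) = n - s(n)$, where $s(n)$ denotes the sum of the binary digits of $n$. First I would apply this to every factorial occurring in the statement. Since $m = m_1 + \cdots + m_l$ and $\nu_2$ is additive on products (and hence $\nu_2(x/y) = \nu_2(x) - \nu_2(y)$ when $y \mid x$), this gives
\[
\nu_2\!\left(\frac{m!}{m_1!\cdots m_l!}\right) = \bigl(m - s(m)\bigr) - \sum_{i=1}^{l}\bigl(m_i - s(m_i)\bigr) = \sum_{i=1}^{l} s(m_i) - s(m),
\]
and likewise, using $2m = 2m_1 + \cdots + 2m_l$,
\[
\nu_2\!\left(\frac{(2m)!}{(2m_1)!\cdots (2m_l)!}\right) = \sum_{i=1}^{l} s(2m_i) - s(2m).
\]

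Next I would observe that doubling a nonnegative integer merely appends a $0$ to its binary expansion, so $s(2k) = s(k)$ for every $k \geqslant 0$. Substituting $s(2m_i) = s(m_i)$ and $s(2m) = s(m)$ into the second display turns it into the first, which proves the lemma.

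An essentially equivalent route is via Kummer's theorem: $\nu_2$ of a multinomial coefficient equals the number of carries that occur when the $m_i$ are added together in base $2$, and multiplying all the summands by $2$ shifts every binary digit one position to the left, hence does not alter the carry pattern. Either way, the only point that needs a word of care is the passage from the binomial to the multinomial form of Legendre's (equivalently Kummer's) formula; this follows by an immediate induction on $l$, writing $\frac{m!}{m_1!\cdots m_l!}$ as a product of binomial coefficients $\binom{m_1+\cdots+m_j}{m_j}$. There is no real obstacle here beyond bookkeeping.
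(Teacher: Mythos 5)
Your proof is correct and rests on the same tool as the paper's, namely Legendre's formula for $\nu_2(n!)$: the paper uses the form $\nu_2(n!)=\sum_{j\geqslant 1}\lfloor n/2^j\rfloor$ and shows directly that the difference of the two valuations telescopes to $-m+m=0$, while you use the equivalent digit-sum form $\nu_2(n!)=n-s(n)$ together with the observation $s(2k)=s(k)$. Both arguments are complete; yours is a marginally shorter piece of bookkeeping but not a genuinely different route.
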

\begin{proof}
    Set
    \[
        D = \nu_2 \left( \frac{m!}{m_1!m_2!\dots 
    m_l!} \right)-\nu_2 \left( \frac{(2m)!}
    {(2m_1)!(2m_2)!\dots (2m_l)!} \right).
    \]
    Using Legendre's formula~\cite[Page 77]{Moll12}, we have
    \begin{align*}
    D &= \nu_2 (m!)- \nu_2 ((2m)!) + \sum_{i=1}^l \nu_2 
    ((2m_i)!)- \nu_2 ((m_i)!)\\
    &= \sum_{j=1}^\infty \left\lfloor \frac{m}{2^j} \right
    \rfloor - \left\lfloor \frac{2m}{2^j} 
    \right\rfloor + \sum_{i=1}^l \sum_{j=1}^\infty \left
    \lfloor \frac{2m_i}{2^j} \right\rfloor -
    \left\lfloor \frac{m_i}{2^j} \right\rfloor \\
    &= -m 
    + \sum_{i=1}^l \left (m_i + \sum_{j=1}^
    \infty \left \lfloor \frac{2m_i}{2^{j+1}} \right\rfloor -
    \left\lfloor \frac{m_i}{2^j} \right\rfloor \right ) = -m + m = 0. \qedhere
    \end{align*} 
\end{proof}

Define the set $X(d)$ as the set of nonnegative integral vectors $\mathbf x = (x_1,\dots,x_d) \in \mathbb Z_{\geqslant 0}^d$ satisfying the constraint
\[
    x_1 + 2x_2 + \dots + dx_d = d.
\]
For each $\mathbf x \in X(d)$, define the coefficient 
\[
    c(\mathbf x) := \frac{(x_1+x_2+\dots +x_{d}-1)!}{x_1! x_2!\dots x_{d}!}.
\]

\begin{corollary}
    \label{cor:2adicdiff}
Suppose $k \in \mathbb N$ and $\mathbf x \in X(2k-1)$.  
Then
\[
\nu_2 \left( c(2\mathbf x) + \frac{(2k-1)}{2}c(\mathbf x)^2 \right ) \geqslant 0.
\]
\end{corollary}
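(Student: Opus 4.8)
The plan is to reduce the claim to two $2$-adic divisibility facts about the multinomial-type coefficient $c(\mathbf x)$ and then combine them. Writing $m_i = x_i$ and noting that $\sum_i x_i \geqslant 1$ (since $\sum_i i x_i = 2k-1 > 0$), Lemma~\ref{lem:2valbound} applied to the tuple $(x_1,\dots,x_{2k-1})$ gives $\nu_2(c(\mathbf x)) \geqslant -\nu_2(m)$ for any nonzero part $m = x_j$; crucially we may choose $j$ so that $i x_i$ contributes the odd number $2k-1$ to the weighted sum, which forces at least one index $i$ with $i$ odd and $x_i$ odd, hence we may take $m$ \emph{odd}, giving $\nu_2(c(\mathbf x)) \geqslant 0$, i.e.\ $c(\mathbf x)$ is already an integer. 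This handles the term $(2k-1)^2 c(\mathbf x)^2$: it is an integer, and since $(2k-1)^2$ is odd, its parity equals that of $c(\mathbf x)$.

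Next I would apply Lemma~\ref{lem:2adicdiff} with the same tuple: taking $m_i = x_i$ (so $m = 2k-1$), the lemma yields
\[
    \nu_2\!\left( \frac{(2k-1)!}{x_1!\cdots x_{2k-1}!} \right) = \nu_2\!\left( \frac{(4k-2)!}{(2x_1)!\cdots (2x_{2k-1})!} \right).
\]
I would then relate these two multinomial coefficients to $c(\mathbf x)$ and $c(2\mathbf x)$. Observe that $2\mathbf x \in X(4k-2)$ since $\sum_i i (2x_i) = 2(2k-1) = 4k-2$, and
\[
    c(2\mathbf x) = \frac{(2x_1 + \dots + 2x_{2k-1} - 1)!}{(2x_1)!\cdots(2x_{2k-1})!} = \frac{(2\sigma - 1)!}{(2x_1)!\cdots(2x_{2k-1})!},
\]
where $\sigma := \sum_i x_i$, while $c(\mathbf x) = (\sigma-1)!/\prod_i x_i!$. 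Comparing $2$-adic valuations and using Lemma~\ref{lem:2adicdiff} once more (this time applied to the parts together with the auxiliary relation between $(2\sigma-1)!$, $(2\sigma)!$, $\sigma!$ and $(\sigma-1)!$, noting $\nu_2((2\sigma)!) - \nu_2((2\sigma-1)!) = \nu_2(2\sigma) = 1 + \nu_2(\sigma)$ and $\nu_2(\sigma!) - \nu_2((\sigma-1)!) = \nu_2(\sigma)$), I would show $\nu_2(c(2\mathbf x)) = \nu_2(c(\mathbf x)) + 1 \geqslant 1$; in particular $c(2\mathbf x)$ is an even integer, so $(4k-2)c(2\mathbf x)$ is an even integer.

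Finally, combining the two pieces: $(4k-2)c(2\mathbf x)$ is even (in fact divisible by $4$), and $(2k-1)^2 c(\mathbf x)^2$ is an integer whose parity matches that of $c(\mathbf x)^2 = c(\mathbf x)^2$. It remains to check that $c(\mathbf x)^2$ is even, i.e.\ that $c(\mathbf x)$ is even. But from the chain above, $\nu_2(c(\mathbf x)) = \nu_2(c(2\mathbf x)) - 1$, and I claimed $\nu_2(c(2\mathbf x)) \geqslant 1$ only; to get evenness of $c(\mathbf x)$ I instead argue directly that $\nu_2(c(2\mathbf x)) \geqslant 2$, which follows because $c(2\mathbf x) = \binom{2\sigma-1}{\,2x_1,\dots\,}$ has an extra factor of $2$ coming from $\nu_2((2\sigma)!/(2\sigma-1)!) - \bigl(\text{contributions from the }(2x_i)!\bigr)$ beyond what is already counted; alternatively, and more cleanly, one shows $c(\mathbf x)$ itself is even by refining Lemma~\ref{lem:2valbound}. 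I expect this last parity bookkeeping — pinning down the exact power of $2$ rather than a one-sided bound — to be the main obstacle, and I would resolve it by carefully running Legendre's formula (as in the proof of Lemma~\ref{lem:2adicdiff}) to compute $\nu_2(c(2\mathbf x)) - \nu_2(c(\mathbf x))$ exactly and separately verifying $\nu_2(c(\mathbf x)) \geqslant 1$ using that $2k-1$ is odd, so that both summands $(4k-2)c(2\mathbf x)$ and $(2k-1)^2 c(\mathbf x)^2$ are individually even, whence their sum is even.
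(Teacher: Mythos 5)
Your overall strategy --- showing that each of the two summands is individually even --- cannot work, and the specific valuation computation that drives it contains a sign error. Take $k=2$ and $\mathbf x = (0,0,1) \in X(3)$: then $c(\mathbf x) = 0!/1! = 1$ and $c(2\mathbf x) = c((0,0,2)) = 1!/2! = 1/2$, so $(4k-2)c(2\mathbf x) = 3$ and $(2k-1)^2c(\mathbf x)^2 = 9$ are both \emph{odd}; only their sum, $12$, is even. In particular your claims that $c(2\mathbf x)$ is an even integer, that $(4k-2)c(2\mathbf x)$ is even, and that $\nu_2(c(\mathbf x)) \geqslant 1$ are all false. The error enters when you apply Lemma~\ref{lem:2adicdiff} ``with $m_i = x_i$ (so $m = 2k-1$)'': the $m$ of that lemma is the plain sum $\sigma := \sum_i x_i$, not the weighted sum $\sum_i i x_i = 2k-1$, and these differ for every $\mathbf x$ other than $(2k-1,0,\dots,0)$. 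Running the bookkeeping correctly with $M = \sigma!/\prod_i x_i!$ and $M' = (2\sigma)!/\prod_i (2x_i)!$, one has $c(\mathbf x) = M/\sigma$ and $c(2\mathbf x) = M'/(2\sigma)$, so Lemma~\ref{lem:2adicdiff} (which gives $\nu_2(M) = \nu_2(M')$) yields $\nu_2(c(2\mathbf x)) = \nu_2(c(\mathbf x)) - 1$, the opposite of what you derived. (Also, $\nu_2(c(\mathbf x)) \geqslant 0$ does not make $c(\mathbf x)$ an integer; e.g.\ $c((3,0,0)) = 1/3$.)

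The correct use of Lemma~\ref{lem:2adicdiff} --- and the route the paper intends --- is to show that the two summands have the \emph{same} parity rather than that each is even. From $\nu_2(c(2\mathbf x)) = \nu_2(c(\mathbf x)) - 1$ one gets
\[
\nu_2\bigl((4k-2)c(2\mathbf x)\bigr) = 1 + \nu_2\bigl(c(2\mathbf x)\bigr) = \nu_2\bigl(c(\mathbf x)\bigr) = \nu_2\bigl((2k-1)c(\mathbf x)\bigr),
\]
and this common value $v$ is nonnegative by your (correct) application of Lemma~\ref{lem:2valbound} to an odd nonzero part $x_i$. Both $(4k-2)c(2\mathbf x)$ and $(2k-1)c(\mathbf x)$ are integers, being the coefficients $d\,c(\mathbf y)$ that occur in Newton's identities for $\mathbf y \in X(d)$ with $d = 4k-2$ and $d = 2k-1$ respectively. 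If $v = 0$, then $(4k-2)c(2\mathbf x)$ and $(2k-1)c(\mathbf x)$ are both odd, hence so is $\bigl((2k-1)c(\mathbf x)\bigr)^2$, and odd plus odd is even; if $v \geqslant 1$, both summands are even. Either way the sum is an even integer, which is the content of the corollary.
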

\begin{proof}
    Let $\mathbf x = (x_1,\dots,x_{2k-1})$.
    Since $2k-1$ is odd, there must be some $i$ for which $x_i$ is odd and set $y = x_i$.
    Let $s = x_1 + \dots + x_{2k-1} - y$.
    Note that 
    $$c(2\mathbf x) = \frac{1}{2y}\frac{(2s+2y-1)!}{(2s)!(2y-1)!}\frac{(2s)!(2y)!}{(2x_1)!(2x_2)!\dots (2x_{2k-1})!}$$
    and 
    $$c(\mathbf x) = \frac{1}{y}\frac{(s+y-1)!}{s!(y-1)!}\frac{s!y!}{x_1!x_2!\dots x_{2k-1}!}.$$
    Since the other factors are multinomial coefficients, we see that $c(2\mathbf x) \in \frac{1}{2y}\mathbb Z$ and $c(\mathbf x) \in \frac{1}{y}\mathbb Z$.
    By Lemma~\ref{lem:2adicdiff}, we have
\begin{equation}
    \label{eqn:c1}
    \nu_2 \left( \frac{(2s)!(2y)!}{(2x_1)!(2x_2)!\dots (2x_{2k-1})!} \right ) = \nu_2 \left( \frac{s!y!}{x_1!x_2!\dots x_{2k-1}!} \right ) \geqslant 0.
\end{equation}
    Furthermore, 
    \begin{align*}
    \frac{(s+y-1)!}{s!(y-1)!} \frac{s+y}{y} &= \frac{(s+y)!}{s!y!}  \\
    \frac{(2s+2y-1)!}{(2s)!(2y-1)!} \frac{s+y}{y} &= \frac{(2s+2y)!}{(2s)!(2y)!}.
    \end{align*}
    Therefore, by Lemma~\ref{lem:2adicdiff}, we also have 
 \begin{equation}
     \label{eqn:c2}
         \nu_2 \left( \frac{(s+y-1)!}{s!(y-1)!} \right ) = \nu_2 \left( \frac{(2s+2y-1)!}{(2s)!(2y-1)!} \right ) \geqslant 0.
 \end{equation}
    First suppose that $\nu_2((c(\mathbf x)) \geqslant 1$.
    Using \eqref{eqn:c1} and \eqref{eqn:c2}, it follows that $\nu_2 \left ((c(2\mathbf x) \right ) \geqslant 0$ and hence $\nu_2 \left( c(2\mathbf x) + \frac{(2k-1)}{2}c(\mathbf x)^2 \right ) \geqslant 0$.
    Finally, suppose that $\nu_2((c(\mathbf x)) = 0$.
    Using \eqref{eqn:c1} and \eqref{eqn:c2}, it follows that $\nu_2 \left (c(2\mathbf x) \right ) = \nu_2 \left( \frac{(2k-1)}{2}c(\mathbf x)^2 \right )$ and hence we obtain the statement of the corollary.
\end{proof}

Define $X^\prime(d) := \left \{ \mathbf x \in X(d)  \; | \; x_{d} = 0 \text{ and } x_i = 0 \text{ for all odd $i$} \right \}$.

\begin{theorem}
\label{thm:a_4k-1}
    Let $n \in \mathbb N$, $q > 2$ be a power of $2$, and $H \in \mathcal H_n(q)$.
    Suppose that $\Gamma^{\mathrm{res}}(H)$ is an Euler graph.
    Write $\operatorname{Char}_{H}(x)=\sum_{i=0}^n 
    a_ix^{n-i}$.
    Then, for $k \in \{2,\dots ,\lfloor (n+1)/4 
    \rfloor\}$, we have 
\[
\frac{a_{4k-1}}{(1-\zeta)^{4k-2}} -  (2k-1)\left(\sum_{\mathbf x \in X^\prime(4k-2)} c(\mathbf x) P_{4k-2}(\mathbf x) + \sum_{\mathbf x \in X(2k-1)} \frac{c(\mathbf x)}{1-\zeta} P_{2k-1}(\mathbf x) \right ) \in (1-\zeta) \mathbb Z[\zeta]
\]
where, for $d \in \mathbb N$ and $\mathbf x \in  X(d)$,
$$
P_d(\mathbf x) :=  \prod_{i=1}^{\lfloor d/2 \rfloor} \left( \frac{a_{2i+1}}{(1-\zeta)^{2i}} \right) ^{x_{2i}}  \prod_{i=1}^{\lceil d/2 \rceil} \left( \frac{a_{2i+1}+a_{2i}+a_{2i-1}(a_3 + a_2 + a_1 + n)}{(1-\zeta)^{2i-1}} \right)^{x_{2i-1}}.
$$
\end{theorem}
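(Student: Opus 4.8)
The proof will extract the claimed congruence from a single application of Lemma~\ref{lem:quadtracCong}. Take $N=4k-2$: since $2k-1$ is odd we have $N\equiv2\pmod4$, and $N>2$ because $k\geqslant2$, so writing $T_m:=\operatorname{tr}(A^m)$, Lemma~\ref{lem:quadtracCong} gives
\[
T_{2k-1}^2-\frac{2}{1-\zeta}T_{2k-1}+T_{4k-2}\in2(1-\zeta)\mathbb Z[\zeta].
\]
I would then expand each trace by the Girard--Newton identity $T_m=m\sum_{\mathbf x\in X(m)}(-1)^{x_1+\dots+x_m}c(\mathbf x)\prod_jb_j^{x_j}$, where $\chi_A(x)=\sum_ib_ix^{n-i}$, determine which monomials survive modulo $2(1-\zeta)\mathbb Z[\zeta]$, and finally rewrite the surviving $b_j$'s in terms of the $a_i$ via Lemma~\ref{lem:a_kAndb_k}. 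Two elementary observations organise the bookkeeping: since $q>2$ is a power of $2$ one has $2\in(1-\zeta)^2\mathbb Z[\zeta]$, hence $2\mathbb Z[\zeta]\subseteq(1-\zeta)^2\mathbb Z[\zeta]$ and $4\mathbb Z[\zeta]\subseteq2(1-\zeta)\mathbb Z[\zeta]$, so signs and the integer $2k-1$ can be ignored modulo the relevant ideals; and for $\mathbf x\in X(m)$ the sum $\sum_{j\text{ odd}}x_j$ has the parity of $m$, which together with $b_j\in(1-\zeta)\mathbb Z[\zeta]$ for odd $j$ (Corollary~\ref{cor:oddbi}) and the integrality of $c(\mathbf x)$ whenever some $x_j$ is odd (Lemma~\ref{lem:2valbound}) determines the $(1-\zeta)$-order of every monomial $c(\mathbf x)\prod_jb_j^{x_j}$.

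The heart of the argument is the matching of terms. In $T_{4k-2}$ the indices of the form $2\mathbf x$ with $\mathbf x\in X(2k-1)$ contribute the monomials $\prod_jb_j^{2x_j}$, the same ones as the diagonal of $T_{2k-1}^2$; their combined coefficient $(4k-2)c(2\mathbf x)+(2k-1)^2c(\mathbf x)^2$ is an even integer by Corollary~\ref{cor:2adicdiff}, and since $\prod_jb_j^{2x_j}\in(1-\zeta)^2\mathbb Z[\zeta]$ these contributions lie in $2(1-\zeta)\mathbb Z[\zeta]$. The off-diagonal part of $T_{2k-1}^2$ disappears for the same reason, and so does every term of $T_{4k-2}$ whose index has a nonzero entry at an odd position (the factor $2$ from $4k-2$ then meets a factor $(1-\zeta)^2$). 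What remains of $T_{4k-2}$ modulo $2(1-\zeta)\mathbb Z[\zeta]$ is the term $-(4k-2)b_{4k-2}$ coming from $\mathbf e_{4k-2}=(0,\dots,0,1)$ together with the sum over $X'(4k-2)$, whereas $\frac{2}{1-\zeta}T_{2k-1}$ survives intact. Cancelling the factor $4k-2=2(2k-1)$, which is permissible because $2k-1$ is coprime to $1-\zeta$, turns the congruence into
\[
-b_{4k-2}+\sum_{\mathbf x\in X'(4k-2)}c(\mathbf x)\prod_jb_j^{x_j}+\frac{1}{1-\zeta}\sum_{\mathbf x\in X(2k-1)}c(\mathbf x)\prod_jb_j^{x_j}\in(1-\zeta)\mathbb Z[\zeta].
\]

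Finally I would substitute: by Lemma~\ref{lem:a_kAndb_k} each $b_j$ agrees modulo $(1-\zeta)^2\mathbb Z[\zeta]$ with the corresponding expression in the $a_i$, so $\prod_jb_j^{x_j}$ may be replaced by $P_d(\mathbf x)$ and $b_{4k-2}$ by $a_{4k-1}/(1-\zeta)^{4k-2}$ (the case $j=2k-1$ of the first relation), the errors introduced being absorbed into $(1-\zeta)\mathbb Z[\zeta]$ exactly because of the divisibility estimates above; reinstating the prefactor $2k-1\equiv1\pmod{1-\zeta}$ then produces the statement. The range $k\in\{2,\dots,\lfloor(n+1)/4\rfloor\}$ is precisely what is needed for all invoked instances of Lemma~\ref{lem:a_kAndb_k} to be valid. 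I expect the middle paragraph to be the real obstacle: one must track, for each $\mathbf x\in X(4k-2)$, both the $2$-adic valuation of $c(\mathbf x)$ and the number of odd-indexed parts of $\mathbf x$, and verify that the only indices escaping $2(1-\zeta)\mathbb Z[\zeta]$ are $\mathbf e_{4k-2}$, the doubled vectors, and those in $X'(4k-2)$.
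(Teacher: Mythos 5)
Your proposal is correct and follows essentially the same route as the paper: apply Lemma~\ref{lem:quadtracCong} with $N=4k-2$, expand the traces via Newton's identities, pair the doubled index vectors $2\mathbf x$ against the diagonal of $\operatorname{tr}(A^{2k-1})^2$ using Corollary~\ref{cor:2adicdiff}, discard the remaining terms with a nonzero odd-indexed part via Lemma~\ref{lem:2valbound}, Corollary~\ref{cor:oddbi} and the parity of $\sum_{j\text{ odd}}x_j$, and finally convert the surviving $b_j$'s into the $a_i$'s through Lemma~\ref{lem:a_kAndb_k}. The only cosmetic difference is that the paper divides the congruence by $2$ at the outset (partitioning $X(4k-2)$ into $X_0\sqcup X_1$) and retains the prefactor $2k-1$ and the signs $(-b_j)^{x_j}$ throughout, whereas you cancel and reinstate them modulo $(1-\zeta)$ at the end.
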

\begin{proof}
    Let $A = (J-H)/(1-\zeta)$ and write $\operatorname{Char}_A(x)=\sum_{i=0}^n 
    b_ix^{n-i}$.
    By Lemma~\ref{lem:a_kAndb_k},
    \begin{equation}
        \label{b_4k-2}
    a_{4k-1} -  
    (1-\zeta)^{4k-2} b_{4k-2} \in (1-\zeta)^{4k} 
    \mathbb Z[\zeta].
    \end{equation}
    As $4k-2$ is congruent to $2$ modulo $4$,
    by Lemma~\ref{lem:quadtracCong}, we have
\begin{equation}
    \label{eqn:traceRel}
    \frac{\operatorname{tr}(A^{4k-2})}{2} + \frac{\operatorname{tr} (A^{2k-1})^2}{2}-\frac{ \operatorname{tr} (A^{2k-1})}{1-\zeta} \in (1-\zeta) \mathbb Z[\zeta].
\end{equation} 
    Using Newton's identities, for all $i \in \{1,\dots,n\}$, we have 
    \begin{equation}
         \label{eqn:traceForm}
    \operatorname{tr}(A^{i})=  i\sum_{\substack{\mathbf x \in  X(i)}} c(\mathbf x) \prod_{j=1}^{i} (-b_j)^{x_j}.
    \end{equation}

 Define the following subsets of $ X(4k-2)$ as
    \begin{align*}
        X_0 &:= \{ \mathbf x \in  X(4k-2) \; | \; x_i \text{ is even for all $i$}  \}; \\
        X_1 &:= \{ \mathbf x \in  X(4k-2) \; | \; x_i \text{ is odd for some $i$}  \}.
    \end{align*}
    Clearly $ X(4k-2) =  X_0 \cup  X_1$.
    Furthermore, observe that, for each $\mathbf x \in  X_0$, we have $\mathbf x = (2x_1,\dots,2x_{2k-1},0,\dots,0)$, where $(x_1,\dots,x_{2k-1}) \in X(2k-1)$.
     Therefore
     \begin{equation}
         \label{trace(A^4k-2)2}
    \frac{\operatorname{tr}(A^{4k-2})}{2} - (2k-1)\left ( \sum_{\mathbf x \in  X(2k-1)} c(2\mathbf x) \prod_{j=1}^{2k-1} b_j^{2x_j} + \sum_{\substack{\mathbf x \in  X_1}} c(\mathbf x) \prod_{j=1}^{4k-2} (-b_j)^{x_j} \right ) \in (1-\zeta)\mathbb Z[\zeta].
    \end{equation}
  
    By Lemma~\ref{lem:2valbound}, for each $\mathbf x \in  X_1$, we have $\nu_2(c(\mathbf x)) \geqslant 0$.
   Since, for each $\mathbf x \in  X(2k-1)$, there exists an odd $i$ such that $x_i > 0$, using Corollary~\ref{cor:oddbi} and \eqref{eqn:traceForm}, we have
      \begin{align}
         \label{trace(A^2k-1)22}
    \frac{\operatorname{tr}(A^{2k-1})^2}{2} &- (2k-1)^2\sum_{\mathbf x \in  X(2k-1)} \frac{c(\mathbf x)^2}{2}\prod_{j=1}^{2k-1} b_j^{2x_j} \in (1-\zeta)\mathbb Z[\zeta].
    \end{align}
    By Lemma~\ref{lem:2valbound}, for each $\mathbf x \in   X(2k-1)$, we have $\nu_2(c(\mathbf x)) \geqslant 0$.
    By Corollary~\ref{cor:2adicdiff}, for each $\mathbf x \in  X(2k-1)$, the $2$-adic valuation of $(2k-1)c(2\mathbf x) + (2k-1)^2c(\mathbf x)^2/2$ is at least $0$.
    Furthermore, for each $\mathbf x \in  X(2k-1)$, there exists an odd $i$ such that $x_i > 0$.
    Thus, using Corollary~\ref{cor:oddbi} and combining  \eqref{eqn:traceRel}, \eqref{eqn:traceForm}, \eqref{trace(A^4k-2)2}, and \eqref{trace(A^2k-1)22}, we find that
     \begin{align*}
        & -b_{4k-2} +  (2k-1) \left ( \sum_{\mathbf x \in  X^\prime(4k-2)} c(\mathbf x) \prod_{j=1}^{4k-2} (-b_j)^{x_j} + \sum_{\mathbf x \in  X(2k-1)} \frac{c(\mathbf x)}{1-\zeta} \prod_{j=1}^{2k-1} (-b_j)^{x_j} \right )  \in (1-\zeta)\mathbb Z[\zeta].
    \end{align*}
    By Proposition~\ref{pro:idealprelim} (f), we have $2 \in (1-\zeta)\mathbb Z[\zeta]$.
    Furthermore, using Lemma~\ref{lem:a_kAndb_k}, we find that
    \begin{align}
            \label{eqn:b4k-2}
        &  -b_{4k-2} +  (2k-1)\left ( \sum_{\mathbf x \in  X^\prime(4k-2)} c(\mathbf x) P_{4k-2}(\mathbf x) + \sum_{\mathbf x \in  X(2k-1)} \frac{c(\mathbf x)}{1-\zeta} P_{2k-1}(\mathbf x) \right )  \in (1-\zeta)\mathbb Z[\zeta].
    \end{align}
    The theorem then follows from combining \eqref{b_4k-2} and \eqref{eqn:b4k-2}.
\end{proof}

Combine Theorem~\ref{thm:a_4k-1} with Proposition~\ref{pro:realIdeal} to obtain the following corollary.

\begin{corollary}
\label{cor:a_4k-1}
    Let $n \in \mathbb N$, $q > 2$ be a power of $2$, and $H \in \mathcal H_n(q)$.
    Suppose that $\Gamma^{\mathrm{res}}(H)$ is an Euler graph.
    Write $\operatorname{Char}_{H}(x)=\sum_{i=0}^n 
    a_ix^{n-i}$.
    Then, for each $k \in \{2,\dots ,\lfloor (n+1)/4 
    \rfloor\}$, the residue class of $a_{4k-1}$ in $\rho^{\lceil (4k-1)/2 \rceil}\mathbb Z[\zeta+\zeta^{-1}]$ is determined by the values of $a_1, \dots, a_{4k-3}$.
\end{corollary}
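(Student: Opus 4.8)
The plan is to quote Theorem~\ref{thm:a_4k-1} and then descend from a $(1-\zeta)$-adic congruence in $\mathbb Z[\zeta]$ to a congruence modulo a power of $\rho$ in the real ring $\mathbb Z[\zeta+\zeta^{-1}]$, using that the characteristic polynomial of an Hermitean matrix has real coefficients. Fix $n$, $q$ and $k\in\{2,\dots,\lfloor(n+1)/4\rfloor\}$ as in the statement, and note that $\lceil(4k-1)/2\rceil=2k$. Write $G$ for the bracketed expression $(2k-1)\bigl(\sum_{\mathbf x\in X^\prime(4k-2)}c(\mathbf x)P_{4k-2}(\mathbf x)+\sum_{\mathbf x\in X(2k-1)}\frac{c(\mathbf x)}{1-\zeta}P_{2k-1}(\mathbf x)\bigr)$ occurring in Theorem~\ref{thm:a_4k-1}. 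The first step I would carry out is to check that $G$ depends only on $n$ and on $a_1,\dots,a_{4k-3}$. This is a combinatorial check on the index ranges inside the $P_d(\mathbf x)$: for $\mathbf x\in X^\prime(4k-2)$ the defining conditions $x_{4k-2}=0$ and $x_i=0$ for odd $i$ kill the second product of $P_{4k-2}$ and restrict the first product to factors $a_{2i+1}/(1-\zeta)^{2i}$ with $2i\leqslant 4k-4$, i.e.\ indices at most $4k-3$; for $\mathbf x\in X(2k-1)$ the factors of $P_{2k-1}$ involve $a_{2i-1},a_{2i},a_{2i+1}$ with $i\leqslant k$, whose largest index is $2k+1$, and $2k+1\leqslant 4k-3$ precisely because $k\geqslant 2$, while the repeated block $a_3+a_2+a_1+n$ involves only indices $\leqslant 3$. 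So the hypothesis $k\geqslant 2$ together with the exclusions built into $X^\prime(4k-2)$ is exactly what prevents $a_{4k-2}$ or $a_{4k-1}$ from entering $G$.

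Next I would multiply the containment of Theorem~\ref{thm:a_4k-1} through by $(1-\zeta)^{4k-2}$ to get $a_{4k-1}-(1-\zeta)^{4k-2}G\in(1-\zeta)^{4k-1}\mathbb Z[\zeta]$. Now let $H,H^\prime\in\mathcal H_n(q)$ both have Euler residue graph and the same values $a_1,\dots,a_{4k-3}$; by the previous paragraph the corresponding elements $G$ and $G^\prime$ coincide, so subtracting the two containments yields $a_{4k-1}-a^\prime_{4k-1}\in(1-\zeta)^{4k-1}\mathbb Z[\zeta]$. As $H$ and $H^\prime$ are Hermitean, $a_{4k-1}$ and $a^\prime_{4k-1}$ are real, so this difference lies in $\bigl((1-\zeta)^{4k-1}\mathbb Z[\zeta]\bigr)\cap\mathbb R$. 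It then remains to identify this intersection with $\rho^{2k}\mathbb Z[\zeta+\zeta^{-1}]$, which is the higher-power analogue of Proposition~\ref{pro:realIdeal}; I would deduce it from Proposition~\ref{pro:realIdeal} by induction on the exponent using the identity $\rho=-\zeta^{-1}(1-\zeta)^2$: if $\alpha\in(1-\zeta)^m\mathbb Z[\zeta]\cap\mathbb R$ with $m\geqslant 1$, then Proposition~\ref{pro:realIdeal} gives $\alpha=\rho\beta$ with $\beta\in\mathbb Z[\zeta+\zeta^{-1}]$, and then $\beta=-\zeta(1-\zeta)^{-2}\alpha$ lies in $(1-\zeta)^{m-2}\mathbb Z[\zeta]\cap\mathbb R$; applying this $2k-1$ times lowers the exponent from $4k-1$ to $1$ and extracts a factor $\rho^{2k-1}$, and a final application of Proposition~\ref{pro:realIdeal} to the remaining factor in $(1-\zeta)\mathbb Z[\zeta]\cap\mathbb R$ supplies the last $\rho$. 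Hence $a_{4k-1}\equiv a^\prime_{4k-1}\pmod{\rho^{2k}\mathbb Z[\zeta+\zeta^{-1}]}$, which is the assertion of the corollary.

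I do not expect a genuine obstacle here, since the substantive content is Theorem~\ref{thm:a_4k-1}, which may be quoted, and what remains is essentially bookkeeping. The two points that need care are the index count ensuring $G$ really is a function of $a_1,\dots,a_{4k-3}$ alone (this is where the hypothesis $k\geqslant 2$ and the precise definition of $X^\prime(4k-2)$ are used) and the passage from a congruence modulo $(1-\zeta)^{4k-1}$ in $\mathbb Z[\zeta]$ to a congruence modulo $\rho^{2k}$ in $\mathbb Z[\zeta+\zeta^{-1}]$, for which the short induction above does the job.
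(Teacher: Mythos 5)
Your proposal is correct and follows exactly the route the paper intends: the paper derives this corollary by combining Theorem~\ref{thm:a_4k-1} with Proposition~\ref{pro:realIdeal}, and your write-up simply supplies the details it leaves implicit, namely the index check that the bracketed expression involves only $a_1,\dots,a_{4k-3}$ (using $k\geqslant 2$ and the definition of $X^\prime(4k-2)$) and the identification $(1-\zeta)^{4k-1}\mathbb Z[\zeta]\cap\mathbb R=\rho^{2k}\mathbb Z[\zeta+\zeta^{-1}]$ via $\rho=-\zeta^{-1}(1-\zeta)^2$. No gaps.
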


Now we are ready to complete the proof of Theorem~\ref{thm:mainBounds}.

\begin{proof}[Proof of Theorem~\ref{thm:mainBounds} (e)]
    Let $H \in \mathcal H_n(2^f)$ where $n$ is odd and $f > 1$ and write $\operatorname{Char}_{H}(x)=\sum_{i=0}^n a_ix^{n-i}$.
    By Remark~\ref{rem:a0a1a2}, $a_0 = 1$ and $a_2$ is determined by the value of $a_1$.
    Since $n$ is odd, $a_1$ must also be an odd integer.
    Using Remark~\ref{rem:rationalElements}, modulo $\rho^e \mathbb Z[\zeta+\zeta^{-1}]$, there are at most $2^{\lceil 2^{2-f}e \rceil - 1}$ residues for $a_1$ modulo $\rho^e \mathbb Z[\zeta+\zeta^{-1}]$.
    By Corollary~\ref{cor:2powerCoeffs} and Remark~\ref{rem:countResidues}, there are at most 
    $2^{e-\lfloor i/2 \rfloor}$ possible residues for $a_i$ modulo $\rho^e \mathbb Z[\zeta+\zeta^{-1}]$, where $i \in \{3,\dots, 2e-1\}$, and just one possible residue for $a_i$ modulo $\rho^e \mathbb Z[\zeta+\zeta^{-1}]$ for $i \geqslant 2e$.
    
    Let $k \in \{2,\dots ,\lfloor (e+1)/4 \rfloor\}$.
    Using Proposition~\ref{pro:uniqueEuler}, without loss of generality, we can assume that the residue graph $\Gamma^{\mathrm{res}}(H)$ is an Euler graph.
    Now apply Corollary~\ref{cor:a_4k-1} to find that, modulo $\rho^{\lceil (4k-1)/2 \rceil}\mathbb Z[\zeta+\zeta^{-1}]$, the residue class of $a_{4k-1}$ is determined by $a_1, \dots ,a_{4k-3}$.
    Set $S= \{3,4,\dots,2e-1\}$ and $T=\{7,11,\dots,4\lfloor (2e+1)/4 \rfloor-1\}$.
    Thus, by inductively fixing the values of $a_3$, $a_4$, and so on, we find that there are at most
    \[
        2^{\lceil 2^{2-f}e \rceil - 1}\prod_{i \in S \backslash T}2^{e-\lfloor i/2 \rfloor}\prod_{i \in T}2^{e-\lceil i/2 \rceil} = 2^{(e-1)^2+\lceil 2^{2-f}e \rceil - 1-\lfloor e/4\rfloor}
    \]
    possible residue classes for $\operatorname{Char}_S(x)$ modulo $(\rho^e \mathbb Z[\zeta+\zeta^{-1}])[x]$.
\end{proof}

\begin{proof}[Proof of Theorem~\ref{thm:mainBounds2} (e)]
    Note that $|\mathcal X_n^\prime(q,e)|$ is equal to the number of congruence classes of $\operatorname{Char}_H(x)$ modulo $\rho^e\mathbb Z[\zeta+\zeta^{-1}][x]$ where $H \in \mathcal H_n(q)$ and the diagonal entries of $H$ are all equal to $1$.
    From this point the proof is similar to corresponding proof of Theorem~\ref{thm:mainBounds}.
\end{proof}

\section{Acknowledgement}

The authors are grateful to the referee for their comments on the manuscript, which have led to various improvements to the exposition.

\bibliographystyle{amsplain}

\end{document}